\documentclass[12pt,reqno]{amsart}
\usepackage{amssymb,amsmath,graphicx,verbatim,amsthm,bbm}
\usepackage{color}
\usepackage{xcolor}
\usepackage{enumitem}
\usepackage{bbm}
\usepackage{floatflt}
\usepackage{nicefrac}
\usepackage{calrsfs}
\usepackage{hyperref}
\usepackage{stmaryrd}



\gdef\SetFigFontNFSS#1#2#3#4#5{} 
\gdef\SetFigFont#1#2#3#4#5{} 

\parskip .04in

\definecolor{subtleblue}{rgb}{0.09,0.32,0.44} 
\hypersetup{
    linktoc=page,  
    colorlinks=true,        
    pdfborder={0 0 0},      
    pdfborderstyle={},      
    linkcolor=subtleblue,   
    citecolor=subtleblue,   
    filecolor=magenta,      
    urlcolor=blue           
}


\definecolor{env_back}{gray}{0.8}
\definecolor{thm_color}{rgb}{0,0,0}
\definecolor{conj_color}{rgb}{0,0,0}
\definecolor{dfn_color}{rgb}{0,0,0}

\newtheorem{thm}{{\color{thm_color}Theorem}}[section]

\newtheorem{lem}[thm]{{\color{thm_color}Lemma}}
\newtheorem{fact}[thm]{{\color{thm_color}Fact}}
\newtheorem{claim}[thm]{\color{thm_color}Claim}

\newtheorem{prop}[thm]{{\color{thm_color}Proposition}}

\theoremstyle{definition}
\newtheorem*{def*}{Definition}
\newtheorem*{rem*}{Remark}

\newcommand{\dist}[2]{d_{#1#2}}
\newcommand{\TV}[2]{d_{\mathit{TV}}(#1,#2)}
\newcommand{\DKL}[2]{D_{\mathit{KL}}\left(#1\|#2\right)}

\newcommand{\BDM}{\mathcal{A}}
\newcommand{\M}{\mathcal{M}}
\newcommand{\B}{\mathcal{B}}
\newcommand{\C}{\mathcal{C}}
\newcommand{\R}{\mathbb{R}}

\newcommand{\vol}{\mathrm{Vol}}
\renewcommand{\P}{\mathbb{P}}

\newcommand{\eps}{\varepsilon}
\newcommand{\G}{\mathcal{G}}
\newcommand{\E}{\mathbb{E}}
\newcommand{\T}{\mathcal{T}}
\newcommand{\1}{\mathbbm{1}}
\newcommand{\II}{\mathcal{I}}
\newcommand{\ext}{\mathrm{ex}(n,K_{t,t})}
\newcommand{\exH}{\mathrm{ex}(n,H)}
\newcommand{\poly}{\mathcal{P}}

\newcommand{\Mnd}{\M_n^{t,\delta}}
\newcommand{\Mndc}{\overline{\Mnd}}

\newcommand{\we}{\emph{\L}}
\newcommand{\HH}{\mathcal{H}}

\newcommand{\F}{\mathcal{F}}
\def\eqd{\,{\buildrel d \over =}\,}

\newcommand{\br}[1]{\llbracket{#1}\rrbracket}

\newcommand{\dontshowuselessoldcrap}{x}
\def\clap#1{\hbox to 0pt{\hss#1\hss}}

\makeatletter
\renewcommand{\andify}{%
  \nxandlist{\unskip, }{\unskip{} \@@and~}{\unskip{} \@@and~}}
\def\author@andify{%
  \nxandlist {\unskip ,\penalty-1 \space\ignorespaces}%
    {\unskip {} \@@and~}%
    {\unskip \penalty-2 \space \@@and~}%
}
\let\@wraptoccontribs\wraptoccontribs
\makeatother
\title[What does a typical metric space look like?]{What does a typical metric space \\ look like?}

\author[Kozma]{Gady Kozma}
\address{Department of Mathematics, the Weizmann Institute of Science, Rehovot 76100, Israel}
\email{gady.kozma@weizmann.ac.il}
\author[Meyerovitch]{Tom Meyerovitch}
\address{Department of Mathematics, Ben Gurion University of the Negev,  Be'er Sheva 8410501, Israel }\email{mtom@bgu.ac.il}\urladdr{http://www.math.bgu.ac.il/~mtom}
\author[Peled]{Ron Peled}
\thanks{The research of G.K.\ was supported by the ISF, by the Jesselson foundation, and by Paul and Tina Gardner. The research of T.M.\ was supported by ISF Grants~626/14 and~1052/18. The research of R.P.\ was supported by ISF Grants~1048/11,~861/15 and~1971/19, by IRG Grant SPTRF, and ERC grant LocalOrder. The research of W.S.\ was supported by ISF Grants~1147/14 and~1145/18. }
\address{School of Mathematical Sciences, Tel Aviv University, Tel Aviv 6997801, Israel}
\email{peledron@tauex.tau.ac.il}
\urladdr{http://www.math.tau.ac.il/~peledron}
\author[Samotij]{Wojciech Samotij}
\address{School of Mathematical Sciences, Tel Aviv University, Tel Aviv 6997801, Israel}
\email{samotij@tauex.tau.ac.il}
\urladdr{http://www.math.tau.ac.il/~samotij}

\dedicatory{To the memory of Dima Ioffe, our friend and colleague.\\
Mathematical physicist, probabilist and a dear person\\
who freely shared his good advice and insight.\\
His passing is a great loss to our community.}

\begin{document}

\begin{abstract}
  The collection $\M_n$ of all metric spaces on $n$
  points whose diameter is at most $2$ can naturally be viewed as a compact convex subset of  $\R^{\binom{n}{2}}$, known as the metric polytope.
  In this paper, we study the metric polytope for large $n$ and show that it is close to the cube $[1,2]^{\binom{n}{2}} \subseteq \M_n$ in the following two senses.
  First, the volume of the polytope is not much larger than that of the cube, with the following quantitative estimates:
  \[
    (\nicefrac{1}{6}+o(1))n^{3/2} \le \log \vol(\M_n)\le O(n^{3/2}).
  \]
  Second, when sampling a metric space from $\M_n$ uniformly at random, the minimum distance is at least $1 - n^{-c}$ with high probability, for some $c > 0$.
  Our proof is based on entropy techniques. We discuss alternative approaches to estimating the volume of $\M_n$ using exchangeability, Szemer\'edi's regularity lemma, the hypergraph container method, and the K\H{o}v\'ari--S\'os--Tur\'an theorem.
\end{abstract}

\maketitle

\section{Introduction}
For a positive integer $n$, let
$\br{n} :=\{1, \dotsc, n\}$ and let $\binom{\br{n}}{2}$ be the set of all
unordered pairs of distinct elements in $\br{n}$. A finite metric space
on $n\ge 2$ points can be regarded as an array $(\dist{i}{j})$ with
$\{i,j\}\in \binom{\br{n}}{2}$, where $\dist{i}{j}$ denotes the
distance between the $i^{\text{th}}$ and $j^{\text{th}}$ points in
the space. Such a metric space may also be regarded as an element of
$\R^{\binom{n}{2}}$ satisfying certain restrictions among its
coordinates. Specifically, the set of all such metric spaces is the
cone
\[
\C_n:=\{(\dist{i}{j})\in\R^{\binom{n}{2}} : \dist{i}{j} >
0\text{ and }\dist{i}{j}\le \dist{i}{k} + \dist{k}{j}\text{ for all
$i,j,k$}\}.
\]
Our goal in this work is to study a
`uniformly chosen metric space on $n$ points'. This is interpreted as a metric space sampled according to the Lebesgue measure from a suitable bounded subset of $\C_n$. There are several natural choices for such a bounded subset. In this work, we focus on the diameter normalisation, that is, we bound the maximal diameter of the space from above.
We thus define the \emph{metric polytope}
\[
\M_n:=\{(\dist{i}{j})\in \C_n : \dist{i}{j}\le 2\text{ for all $i,j$}\}.
\]
The specific upper bound on the diameter amounts only to a scaling factor, with the constant two chosen to simplify some of the later expressions.

Understanding the structure of a uniformly chosen metric space in $\M_n$ is
intimately related to understanding the volume of $\M_n$. By
construction, we have the trivial upper bound
\begin{equation}\label{eq:trivial_volume_upper_bound}
  \vol(\M_n)\le 2^{\binom{n}{2}}.
\end{equation}
To obtain a lower bound, we make the following observation: Any triple
$x,y,z\in[1,2]$ satisfies the triangle inequality $x \le y + z$ and,
consequently, $\M_n$ contains the cube $[1,2]^{\binom{n}{2}}$.
This yields the lower bound
\begin{equation}\label{eq:trivial_volume_lower_bound}
  \vol(\M_n)\ge 1.
\end{equation}
The precise behaviour of the volume $\vol(\M_n)$ seems
difficult to study. For instance, while intuitive, we do not know
whether $\vol(\M_{n+1})\ge \vol(\M_n)$ for all $n$ (see also
Section~\ref{sec:further_questions}). It is thus interesting to note
that at least the `radius' $\vol(\M_n)^{1/\binom{n}{2}}$ exhibits
some regularity.
\begin{prop}\label{prop:decreasing_radius}
  The sequence $n\mapsto\vol(\M_n)^{1/\binom{n}{2}}$ is
  non-increasing.
\end{prop}
The proposition is deduced from Shearer's inequality, see Section~\ref{sec:monotonicity}.
It allows to obtain increasingly refined volume
estimates for $\vol(\M_n)$ via finite computations. For instance,
one may check that $\vol(\M_3)=4$ (see Figure~\ref{fig:M3})
and hence we have the inequality
\begin{equation}\label{eq:second_trivial_volume_upper_bound}
  \vol(\M_n)\le 4^{\frac{1}{3}\binom{n}{2}}\quad \text{for all $n\ge 3$},
\end{equation}
improving upon the trivial upper
bound~\eqref{eq:trivial_volume_upper_bound}. Mascioni~\cite{Ma05}
calculated $\vol(\M_4) = \frac{136}{15}$ and used it to deduce a bound
on $\vol(\M_n)$ that is stronger than
\eqref{eq:second_trivial_volume_upper_bound} but weaker than what
can be deduced from Proposition~\ref{prop:decreasing_radius}. The
proposition and~\eqref{eq:trivial_volume_lower_bound} imply that the limit
\begin{equation}\label{eq:limiting_volume_constant}
  \lim_{n\to\infty} \vol(\M_n)^{1 / \binom{n}{2}}
\end{equation}
exists, which raises the natural question of finding its value.

Our main result is that a uniformly chosen metric space is `almost
in $[1,2]^{\binom{n}{2}}$', as the following two theorems make
precise. Our first theorem shows that the limiting constant
\eqref{eq:limiting_volume_constant} equals one, that is,
\begin{equation}\label{eq:limit constant is one}
  \vol(\M_n) = 2^{o(n^2)}\quad\text{as $n\to\infty$}.
\end{equation}
In fact, our analysis goes much further and determines the \emph{second-order term} in the logarithm of the volume up to a multiplicative constant.\footnote{Note that~\eqref{eq:main_volume_estimates} hides the first-order term $(2/2)^{\binom{n}{2}}$, which would become $(M/2)^{\binom{n}{2}}$ if we chose to normalise the diameter of $\M_n$ to be $M$, instead of $2$.}

\begin{thm}\label{thm:volume_estimate}
  The following asymptotic estimates hold as $n \to \infty$:
  \begin{equation}\label{eq:main_volume_estimates}
    \exp\big((\nicefrac{1}{6}-o(1))n^{3/2}\big) \le \vol(\M_n)\le \exp\big(Cn^{3/2}\big)
  \end{equation}
  for some absolute constant $C$.
\end{thm}

Our second theorem studies the minimum distance in a typical metric space in $\M_n$. Let $d$ be a uniformly sampled metric space from $\M_n$. Since
\begin{equation*}
  \P\left(\min_{i,j} d_{ij} > 1-\delta\right) \le \frac{(1+\delta)^{\binom{n}{2}}}{\vol(\M_n)},
\end{equation*}
the lower bound in Theorem~\ref{thm:volume_estimate} implies that, for any $a<\frac{1}{3}$,
\begin{equation}
  \label{eq:min-dij-lower}
  \P\left(\min_{i,j} d_{ij} \le 1-\frac{a}{\sqrt{n}}\right) \to 1\quad\text{as $n\to\infty$}.
\end{equation}
Complementing this fact, we show that, in a typical metric space, the minimum distance is polynomially close to one.

\begin{thm}\label{thm:minimal_distance}
  There exist constants $C,c>0$ such that, for all $n\ge 2$, if $d$ is a uniformly sampled metric space from $\M_n$, then
  \begin{equation*}
    \P\left(\min_{i,j} d_{ij} \le 1-n^{-c}\right)\le C n^{-c}.
  \end{equation*}
\end{thm}
\begin{figure}
\begin{centering}
\includegraphics{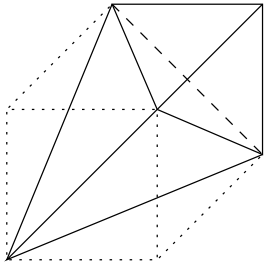}
\end{centering}
\caption{$\M_3$ inside $[0,2]^3$.\label{fig:M3}}
\end{figure}

It would be interesting to find the typical order of $1 - \min_{i,j} d_{ij}$, see also Section~\ref{sec:further_questions}.  Our proof shows that it is at most~$n^{-1/30}$.

\subsection*{Reader's guide}

The heart of this work is the proof of the upper bound on the volume of $\M_n$ in~\eqref{eq:main_volume_estimates},
which relies on entropy techniques; a conceptual outline of our argument is presented in the next section. We review the relevant background on differential entropy in Section~\ref{sec:entropy}.  The volume estimate itself is then proved in Section~\ref{sec:volume-upper-bound}.  One of the main ingredients in the argument is an upper bound on the maximum entropy of a vector of independent random variables that is almost supported in a given compact, convex set;  this result is derived in Section~\ref{sec:entropy-maximising product distributions}.
Several alternative approaches to proving upper bounds on the volume of $\M_n$, which lead to results weaker
than Theorem~\ref{thm:volume_estimate}, are reviewed in Section~\ref{sec:other_approaches}.

Our proof of the lower bound on the volume is a fairly simple application of the Local Lemma of Erd\H{o}s and Lov\'asz~\cite{ErLo75}, see Section~\ref{sec:lower}.

The starting point for the proof of Theorem~\ref{thm:minimal_distance}, given in Section~\ref{sec:distance}, is an upper bound on the probability
that the distance between a \emph{fixed} pair of points is shorter than one (Proposition~\ref{prop:distance-lower-tail}), which is a by-product of
our proof of the upper bound on the volume of $\M_n$. The assertion of the theorem is then deduced via elementary, but nontrivial,
combinatorial arguments.

Section~\ref{sec:discussion_and_open_questions} contains some further discussion and a selection of open questions.

\subsection*{A remark on precedence}

This paper has been long in writing and a number of results have appeared in the interim, notably Mubayi and Terry~\cite{mubayi2019discrete} and Balogh and Wagner~\cite{BalWag16}, who considered the number of metric spaces with distances in the discrete set $\{1, \dotsc, M\}$. The methods of~\cite{BalWag16} also yielded the upper bound $\vol(\M_n) \le \exp(n^{11/6+o(1)})$. (We elaborate on the relation between the discrete and the continuous model in Section~\ref{sec:discrete-problem}.) These papers have kindly acknowledged our precedence, but, for fairness, it should be noted that we did not have the upper bound of Theorem~\ref{thm:volume_estimate} then, only a bound of the form $\vol(\M_n)\le\exp(n^{2-c})$; in particular, the volume bound of~\cite{BalWag16} was stronger than ours. Several months before we streamlined the entropy argument underlying the proof of the upper bound in Theorem~\ref{thm:volume_estimate} to yield the estimate $\vol(\M_n) \le \exp(Cn^{3/2})$, in a joint work with Rob Morris, we found a more efficient version of the argument of Balogh and Wagner~\cite{BalWag16}, based on the method of hypergraph containers, that gives the estimate $\vol(\M_n) \le \exp\big( C n^{3/2} (\log n)^3 \big)$; we present a detailed sketch of this argument in Section~\ref{sec:container-method}.

\subsection*{Further directions and related work}

We believe that the general method of relating entropy and independence that underlies our proof of Theorem~\ref{thm:volume_estimate} will find many further applications. In particular, the first and the fourth named authors adapted the methods of this work, and combined them with the arguments underlying the proofs of the hypergraph container theorems~\cite{BalMorSam, SaxTho}, to study lower tails of random variables that can be expressed as polynomials of independent Bernoulli random variables~\cite{KozSam}.

Similar ideas of relating entropy and independence were used by Tao~\cite[Lemma~4.3]{MR2212136} to develop a probabilistic interpretation of Szemer\'edi's regularity lemma. Concurrently with the writing of this paper, Ellis, Friedgut, Kindler, and Yehudayoff~\cite{EllFriKinYeh15} used a related approach to prove stability versions of the Loomis--Whitney inequality and the more general Uniform Cover inequality.
The pigeonhole principle argument that appears in our proof outline below (see also Lemma~\ref{lem:h_m_bound_with_difference}) is somewhat reminiscent of the  Lov\'asz--Szeg\'edy Hilbert space regularity lemma, see the proof of \cite[Lemma~4.1]{MR2306658} (we thank Bal\'azs R\'ath for pointing out this connection).

\subsection{Proof outline}\label{sec:proof outline}

Suppose that $d$ is a uniformly chosen metric space from $\M_n$. Conceptually, our argument consists of three steps.

\subsubsection*{Step I (conditioning)}

We say that a subset $F \subseteq \binom{\br{n}}{2}$ has the \emph{conditioned almost independence} property if the following holds: Conditioned on all the distances $d_f$ with $f \in F$, for each triangle $\{i,j,k\}$ whose edges lie outside of $F$, the distances $d_{ij}$, $d_{ik}$, and $d_{jk}$ become close to mutually independent.

The goal of the first step is to find a `small' set $F$ with the above property.
In order to show this, for $m \ge 0$, define the set
\[
  F_m := \Big\{\{s,t\} \in \binom{\br{n}}{2} : \max\{s,t\} > n-m\Big\}
\]
and examine the conditional entropy
\[
  h(F_m) := H\big(d_{12} \mid\{d_f : f \in F_m\}\big).
\]
Since $F_{m+1} \supseteq F_m$, monotonicity of conditional entropy implies that the sequence $m \mapsto h(F_m)$ is nonincreasing.
Moreover, it is not difficult to bound $h(F_0)$ from above and $h(F_{\sqrt{n}})$ from below by absolute constants.
Thus, the pigeonhole principle produces an $m_0$ with $0 \le m_0 \le \sqrt{n}$ for which
\begin{equation}
  \label{eq:hm-pigeonhole}
  h(F_{m_0})-h(F_{{m_0}+1}) \le \frac{C}{\sqrt{n}}
\end{equation}
for some absolute constant $C$. The set $F$ described above is taken to be $F_{m_0}$. Its cardinality is at most $m_0 n \le n^{3/2}$. We now argue that $F$ has the conditioned almost independence property. Since $\{1,n-m_0\}, \{2,n-m_0\} \in F_{m_0+1} \setminus F_{m_0}$, inequality~\eqref{eq:hm-pigeonhole} gives (again using the monotonicity of conditioned entropy)
\begin{equation*}
  h(F) - h\big(F \cup \{\{1,n-m_0\}, \{2,n-m_0\}\}\big) \le \frac{C}{\sqrt{n}}.
\end{equation*}
Symmetry considerations show that, in fact, for every ordered triple of distinct $i, j, k \in \br{n-m_0}$, we have
\begin{equation}
  \label{eq:dijk-relative-entropy}
  H\big(d_{ij} \mid \{d_f : f \in F\}\big) - H\big(d_{ij} \mid \{d_f : f \in F\} \cup \{d_{ik}, d_{jk}\}\big) \le \frac{C}{\sqrt{n}}.
\end{equation}
Inequality~\eqref{eq:dijk-relative-entropy} is the notion of almost independence that we need. It may be conveniently restated in terms of the average \emph{Kullback--Leibler divergence} between the conditional (on all $d_f$ with $f \in F$) joint distribution of $d_{ij}$, $d_{ik}$, $d_{jk}$ and the product of the (conditional) marginal distributions of $d_{ij}$, $d_{ik}$, and $d_{jk}$:
\begin{equation}
  \label{eq:dijk-relative-DKL}
  \E\big[\DKL{(d_{ij}, d_{ik}, d_{jk})}{d_{ij} \times d_{ik} \times d_{jk}}\big] \le \frac{C}{\sqrt{n}}.
\end{equation}

\subsubsection*{Step II (subadditivity)}

Since $d$ is a uniformly sampled metric space from $\M_n$,
\[
  H(d) = \log\big(\vol(\M_n)\big).
\]
Using the chain rule for conditional entropy, we may write
\begin{equation}
  \label{eq:chain-rule-entropy}
  H(d) = \underbrace{H(\{d_f : f \in F\})}_{\alpha} + \underbrace{H(\{d_f : f \notin F\} \mid \{d_f : f \in F\})}_{\beta}.
\end{equation}
Since $d_f \in [0,2]$ for every $f \in F$, we have $\alpha \le |F| \log 2$. By considering an arbitrary Steiner triple system on $n - O(1)$ vertices, one sees that the complement of $F$ can be partitioned into a family $\T$ of edge-disjoint triangles and a leftover set of pairs $G$ with $|G| \le C (|F| + n)$. Using subadditivity of entropy,
\begin{equation}
  \label{eq:beta-upper-bound}
  \beta \le |G| \log 2 + \sum_{\{i,j,k\} \in \T} H(d_{ij}, d_{ik}, d_{jk} \mid \{d_f : f \in F\}).
\end{equation}

\subsubsection*{Step III (entropy-maximising distributions)}

Combining the above two steps, we arrive at the problem of bounding $H(d_{ij}, d_{ik}, d_{jk} \mid \{d_f : f \in F\})$, which, by conditioned almost independence, amounts to estimating the largest entropy of a vector that is supported on~$\M_3$ and satisfies~\eqref{eq:dijk-relative-DKL}. We first observe that~\eqref{eq:dijk-relative-DKL} implies the following inequality:
\[
  \E\big[\P(d_{ij} \times d_{ik} \times d_{jk} \notin \M_3)\big] \le \frac{C}{\sqrt{n}},
\]
see Lemma~\ref{lem:Kullback_Leibler_and_support}. This allows us to bound $H(d_{ij}, d_{ik}, d_{jk} \mid \{d_f : f \in F\})$ by the largest entropy of a vector of (fully) independent random variables that is almost supported on~$\M_3$. To this end, we prove a general statement (Theorem~\ref{thm:entropy_bound_for_almost_independent}) showing that the largest entropy of a vector of independent random variables that is almost supported on a convex set $\poly$ cannot be much larger than the logarithm of the volume of the largest box contained in $\poly$. In the case $\poly = \M_3$, the (unique) largest such box is $[1,2]^3$ (Lemma~\ref{lem:independent_max_volume}) and, consequently, the entropy cannot be much larger than zero.

The three steps suffice to show that the volume of $\M_n$ is $\exp(o(n^2))$, that is, that the limiting constant in~\eqref{eq:limiting_volume_constant} is one. Moreover, the various error terms are polynomially related and the above argument shows the quantitative estimate $\vol(\M_n) \le \exp(Cn^{2-c})$ for an explicit $c > 0$. To obtain the sharp exponent $3/2$, as in the statement of Theorem~\ref{thm:volume_estimate}, several enhancements to the above argument are made. In particular, the following two bounds are proved:
\begin{align}
  \label{eq:sketch-chain-rule-Fm}
  \log(\vol(\M_n)) & \le \sum_{m=0}^{n-2} |F_{m+1} \setminus F_m| \cdot h(F_m) \le n \cdot \sum_{m=0}^{n-2} h(F_m), \\
  \label{eq:hm-upper-bound}
  h(F_m) & \le C \big(h(F_m) - h(F_{m+1})\big)^{1/3}.
\end{align}
The bound~\eqref{eq:hm-upper-bound} implies that $h(F_m) \le C' (m+1)^{-1/2}$, as shown in Lemma~\ref{lem:h_m_bound}, which gives the claimed estimate after substituting it into~\eqref{eq:sketch-chain-rule-Fm}. The estimate~\eqref{eq:sketch-chain-rule-Fm} improves upon the subadditivity step, making use of the symmetry inherent in the specific choice of the sets~$F_m$, see~\eqref{eq:volume_decomposition}. Inequality~\eqref{eq:hm-upper-bound} is obtained using a more careful analysis in steps one and three above.

\section{Entropy and almost independence}
\label{sec:entropy}

\subsection{Differential entropy}
\label{sec:differential-entropy}

We now recall the notion and some properties of differential
entropy, the entropy of continuous random variables. Readers who are
used to the entropy of discrete random variables (Shannon's entropy)
should keep in mind that in the continuous case entropies can be
either positive or negative, the value $0$ plays no special role.

Given an absolutely continuous probability measure $\mu$ on $\R^k$
with density $f$ and a~random variable $X \sim \mu$, the
\emph{differential entropy} (or simply \emph{entropy}) of $\mu$ (or
of $X$) is defined as
\begin{equation}\label{eq:H_def}
  H(\mu) := H(X) := -\int \log(f(x)) f(x)dx = -\int \log\left(f(x)\right)d\mu(x),
\end{equation}
whenever the above integral is well-defined. As is customary, if random variables
$X_1,\dotsc,X_m$ have a joint density function, we will write $H(X_1, \dotsc, X_m)$
for the entropy of the random vector $(X_1, \dotsc, X_m)$.  Throughout the paper,
we write $\log$ to denote the natural logarithm.

Observe that if $X$ takes values in a compact set $K \subseteq \R^k$, then by Jensen's
inequality,
\begin{equation}\label{eq:entropy_on_compact}
  \begin{split}
    H(X) & = -\int_K \log(f(x)) f(x)dx = \int_{K\cap\{f>0\}} \log\left(\frac{1}{f(x)}\right) f(x) dx \\
    & \le \log\left(\int_{K \cap \{f > 0\}} \frac{1}{f(x)} f(x)dx\right) \le \log(\vol(K)).
  \end{split}
\end{equation}

Differential entropy may be negative, e.g., if $\vol(K) < 1$ above.
It could even happen that $H(X) = -\infty$. However, one easily
checks that if the density $f$ is bounded, then $H(X) >
-\infty$. In view of this, for the sake of simplicity, from now on we
focus on probability measures on $\R^k$ that are
compactly supported and admit a bounded density. We denote
the family of all such measures by $\BDM(\R^k)$. We emphasize that $\BDM(\R^k)$
is closed under projections.

\begin{fact}
  \label{fact:hereditary-density}
  If $X\in\R^{k_1}$ and $Y\in\R^{k_2}$ have a joint distribution in $\BDM(\R^{k_1+k_2})$,
  then the distribution of $X$ is in $\BDM(\R^{k_1})$.
\end{fact}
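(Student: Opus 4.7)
The plan is to directly write down the marginal density of $X$ and verify that it inherits both the bounded-density and compact-support properties from the joint density. Let $f \colon \R^{k_1+k_2} \to [0,\infty)$ denote the joint density of $(X,Y)$ and let $K \subseteq \R^{k_1+k_2}$ be a compact set containing its support.

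First I would observe that Fubini's theorem produces the marginal density
\[
  f_X(x) = \int_{\R^{k_2}} f(x,y)\,dy,
\]
and that $f_X$ vanishes outside the projection $\pi_1(K) \subseteq \R^{k_1}$, which is compact as the continuous image of a compact set under the coordinate projection; this handles the compact-support requirement. To bound $f_X$ uniformly, I would note that for each fixed $x$ the section $\{y : (x,y) \in K\}$ is contained in the projection $\pi_2(K) \subseteq \R^{k_2}$, which is also compact and hence of finite Lebesgue measure. Substituting this into the defining integral gives
\[
  f_X(x) \le \|f\|_\infty \cdot \vol(\pi_2(K)) < \infty
\]
for every $x$, which is the bounded-density requirement.

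There is essentially no obstacle here: the only inputs needed are Fubini's theorem (applicable because $f$ is nonnegative and bounded with compact support, hence integrable) and the standard topological fact that continuous images of compact sets are compact. Both ingredients are entirely routine, so the argument is just the two lines of calculation sketched above.
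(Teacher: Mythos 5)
Your argument is correct and is precisely the routine verification that the paper omits (the statement is labelled a ``Fact'' and given without proof): the marginal density $f_X(x)=\int f(x,y)\,dy$ is supported in the compact projection $\pi_1(K)$ and is bounded by $\|f\|_\infty\cdot\vol(\pi_2(K))$. Nothing further is needed.
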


Keeping in mind the
case of equality in Jensen's inequality and applying it to
\eqref{eq:entropy_on_compact}, let us note the following for future
reference.

\begin{lem}
  \label{lem:entropy-compact-support}
  If the distribution of a random variable $X$ is in $\BDM(\R^k)$ and $X$ takes values in a compact set $K$, then
  \[
  -\infty < H(X) \le \log(\vol(K)).
  \]
  The second inequality holds with equality if and only if $X$ is uniform on $K$.
\end{lem}

Further use is made of the following generalisation of~\eqref{eq:entropy_on_compact} for which we also provide a quantitative `stability' estimate.

\begin{lem}\label{lem:entropy_bound_for_sub_probability}
  Let $K\subseteq\R^k$ be a bounded measurable set and let $f\colon\R^k\to[0,\infty)$ be a bounded measurable function. Set $p := \int_K f(x)dx$. Then
  \begin{equation}\label{eq:entropy on bounded set}
    -\int_K f(x)\log(f(x))dx \le
    p\log\left(\frac{\vol(K)}{p}\right),
  \end{equation}
  where we interpret the right-hand side as $0$ if $p=0$, and define $0\log 0=0$ for the left-hand side.

  Moreover, if $K$ admits a partition $K = K_1 \cup K_2$ for measurable $K_1, K_2$ and either
  \[
  \frac{\int_{K_1} f(x)dx}{\int_K f(x)dx} \le \frac{1}{10} \cdot \frac{\vol(K_1)}{\vol(K)}\quad\text{or}\quad\frac{\int_{K_1} f(x)dx}{\int_K f(x)dx} \ge 10 \cdot \frac{\vol(K_1)}{\vol(K)}
  \]
  then
  \begin{multline}
    \label{eq:entropy-bound-non-uniform-sub-probability}
    -\int_K f(x)\log(f(x)) \, dx \le p\log\left(\frac{\vol(K)}{p}\right)\\ - \frac{p}{4} \cdot \max\left\{\frac{\vol(K_1)}{\vol(K)},\frac{\int_{K_1} f(x)dx}{\int_K f(x)dx}\right\}.
  \end{multline}
\end{lem}
\begin{proof}
  The bound is trivial if $p=0$. Otherwise, since $g := f/p$ satisfies
  \[
    -\int_Kf(x) \log(f(x)) \, dx = - p\int_Kg(x) \log(g(x)) \, dx - p\log p,
  \]
  it suffices to prove the results when $p=1$, as we now assume.

  The estimate~\eqref{eq:entropy on bounded set} follows from the same calculation as in~\eqref{eq:entropy_on_compact}.

  We proceed to prove~\eqref{eq:entropy-bound-non-uniform-sub-probability}. Set $r := \frac{\int_{K_1} f(x)dx}{\int_K f(x)dx} = \int_{K_1} f(x)dx$ and $q := \frac{\vol(K_1)}{\vol(K)}$ so that either $r\le \frac{1}{10}q$ or $r\ge 10q$. Invoking~\eqref{eq:entropy on bounded set} twice yields
  \begin{align*}
    - \int_K f(x) \log(f(x))dx & =- \left(\int_{K_1} + \int_{K_2}\right) f(x) \log(f(x))dx \\
    & \le r \log\left(\frac{\vol(K_1)}{r}\right) +  (1-r) \log\left(\frac{\vol(K_2)}{(1-r)}\right),
  \end{align*}
  which, by the definition of $q$, is easily seen to be equivalent to:
  \begin{multline*}
    \log(\vol(K)) + \int_K f(x) \log(f(x))dx \\
    \ge r \log\left(\frac{r}{q}\right) + (1-r) \log \left(\frac{1-r}{1-q}\right).
  \end{multline*}
  Define $D(x) := x\log(x/q) + (1-x)\log((1-x)/(1-q))$, so that the right-hand side above is $D(r)$. (An observant reader will recognise that $D(r)$ is the Kullback--Leibler divergence between Bernoulli random variables with success probabilities $r$ and $q$, respectively.) Note that $D(q) = 0$ and that
  \begin{equation}
    \label{eq:DKL-derivative}
    D'(x) = \log\left(\frac{x}{q}\right) - \log\left(\frac{1-x}{1-q}\right).
  \end{equation}
  Observe further that $D'(x)$ is an increasing function of $x$ and $D'(q)=0$. This implies that, in the case where $r \le q/10$,
  \[
    D(r) \ge \frac{q-r}{2} \cdot \left(-D'\left(\frac{r+q}{2}\right)\right) \ge \frac{9q}{20} \cdot \log \left(\frac{20}{11}\right) \ge \frac{q}{4}
  \]
  and, in the case where $r\ge 10 q$,
  \[
    D(r) \ge \frac{r-q}{2} \cdot D'\left(\frac{r+q}{2}\right) \ge \frac{9r}{20} \cdot \log \left(\frac{11}{2}\right) \ge \frac{r}{4}.\qedhere
  \]
\end{proof}

\subsection{Conditional entropy}
\label{sec:conditional-entropy}

For random variables $X \in \R^{k_1}$ and $Y \in \R^{k_2}$ having a joint density $f$ on
$\R^{k_1 + k_2}$, the \emph{conditional entropy} of $X$ given $Y$,
denoted by $H(X \mid Y)$, is the average over $Y$ of the entropy of
the conditional distribution of $X$ given~$Y$. Formally, if we write
\begin{equation}
  \label{eq:conditioned-density}
  g(y):=\int f(x,y)\,dx\quad\text{and}\quad f_y(x):=\frac{f(x,y)}{g(y)},
\end{equation}
with $f_y$ defined for almost every $y$ with respect to the
distribution of~$Y$, then
\begin{equation}
  \label{eq:conditional_entropy_def}
  H(X \mid Y) := \int H(X_{\{Y = y\}}) g(y)\,dy = -\iint
  \log\left(f_y(x)\right) f_y(x) dx\, g(y)\,dy,
\end{equation}
whenever the above integral is well defined, where $X_{\{Y=y\}}$ denotes the random variable $X$ conditioned on the event $\{Y=y\}$.
Note that, using Fact~\eqref{fact:hereditary-density}, $H(X \mid Y)$ is well defined and finite whenever the joint distribution of $X$ and $Y$ is in $\BDM(\R^{k_1+k_2})$.

\subsection{Kullback--Leibler divergence}
\label{sec:KL-divergence}

Given two absolutely continuous probability measures $\mu$ and $\nu$
on~$\R^k$ with densities $f$ and $g$, respectively, and random
variables $X \sim \mu$ and $Y \sim \nu$, we define the
\emph{Kullback--Leibler divergence} between $\mu$ and $\nu$ (or between $X$
and $Y$) by
\begin{equation}
  \label{eq:DKL-def}
  \DKL{\mu}{\nu} := \DKL{X}{Y} := \int \log\left( \frac{f(x)}{g(x)}\right)f(x)\,dx.
\end{equation}
Since $\log y  \ge 1 - 1/y$ for every $y > 0$, we see that
\begin{equation}
  \label{eq:DKL-nonnegative}
  \DKL{\mu}{\nu} \ge \int \left(1-\frac{g(x)}{f(x)}\right) f(x)\, dx = 0.
\end{equation}
In particular, the integral in~\eqref{eq:DKL-def} is well defined,
possibly as $+\infty$.

We note a simple relation between the entropy of a pair of random variables and their Kullback--Leibler divergence.
Let $X$ and $Y$ be random variables taking values in $\R^{k_1}$ and $\R^{k_2}$, respectively, with a joint distribution in $\BDM(\R^{k_1+k_2})$. A direct calculation shows that
\begin{equation}\label{eq:entropy_KL_relation}
  H(X,Y) = H(X) + H(Y) - \DKL{(X,Y)}{X\times Y},
\end{equation}
where we use the notation $X\times Y$ to denote a random variable whose distribution is the product of the marginal distribution of $X$ and the marginal distribution of $Y$; in other words, $X \times Y$ is composed of independent copies of $X$ and $Y$.

\subsection{Properties of entropy}
\label{sec:properties-entropy}

We now recall some standard facts about entropy.

\begin{lem}\label{prop:basic_entropy_properties}
  Suppose $X\in\R^{k_1}$, $Y\in\R^{k_2}$, $Z\in\R^{k_3}$ have a joint distribution in $\BDM(\R^{k_1+k_2+k_3})$. Then
  \begin{enumerate}[label={\rm(\textit{\roman*})}]
    \item
      \label{item:entropy-prop-1}
      $H(X,Y) = H(X \mid Y) + H(Y)$,
    \item
      \label{item:entropy-prop-2}
      $H(X \mid Y)\le H(X)$,
    \item
      \label{item:entropy-prop-3}
      $H(X,Y) \le H(X) + H(Y)$,
   \item
      \label{item:entropy-prop-4}
      $H(X \mid Y,Z)\le H(X \mid Y)$.
  \end{enumerate}
\end{lem}
\begin{proof}
  Note first that our assumption on the joint distribution of $X$, $Y$, and $Z$ implies that all entropies appearing
  in the statement of the lemma are well defined, see Fact~\ref{fact:hereditary-density} and Lemma~\ref{lem:entropy-compact-support}. To see~\ref{item:entropy-prop-1}, let $f$ be the joint density of $X$ and $Y$ and define $g$ and $f_y$
  as in~\eqref{eq:conditioned-density}. Then
  \[
  \begin{split}
    \lefteqn{H(X,Y)  = - \iint \log(f(x,y)) f(x,y)\, dx\, dy}\quad & \\
    & = - \iint \log(f_y(x)g(y)) f_y(x)g(y)\, dx\,dy \\
    & = - \iint \log(f_y(x)) f_y(x)\, dx\, g(y)\, dy - \int \log(g(y)) g(y) \int f_y(x)\,dx\,dy \\
    & = H(X \mid Y) + H(Y).
  \end{split}
  \]
  Inequality~\ref{item:entropy-prop-3} is a direct consequence of~\eqref{eq:DKL-nonnegative} and~\eqref{eq:entropy_KL_relation}
  whereas~\ref{item:entropy-prop-2} follows immediately from~\ref{item:entropy-prop-1} and~\ref{item:entropy-prop-3}.
  To see~\ref{item:entropy-prop-4}, let $f$ be the joint density of $X$, $Y$, and $Z$, and let
  \[
  g(y) := \iint f(x,y,z)\, dx\, dz.
  \]
  It is not hard to see that
  \begin{align*}
    H(X \mid Y,Z) &= \int H(X_{\{Y=y\}} \mid Z_{\{Y=y\}}) g(y) dy \\
    &\le \int H(X_{\{Y=y\}}) g(y) dy = H(X \mid Y),
  \end{align*}
  where the inequality follows from~\ref{item:entropy-prop-2}.
\end{proof}

A powerful tool for comparing entropies is the following inequality
originally proved by Shearer (see~\cite{shearer_ineq}) for Shannon's
entropy. As the literature usually deals with the discrete case, we provide a short proof based on the treatment in~\cite{AlSp}.

\begin{thm}[Shearer's inequality]\label{thm:Shearers_inequality}
  Let $X_1,\dotsc,X_m$ be random variables with a joint density which is bounded and compactly supported.
  Let $\II \subseteq 2^{\br{m}}$ be a collection of subsets which $r$-covers the set $\br{m}$, i.e.,~has the property
  that for each $i \in \br{m}$,
  \begin{equation}\label{eq:r_cover}
  |\{I\in \II : i\in I\}| = r.
  \end{equation}
Then
\begin{equation}
  \label{eq:Shearer}
  H(X_1,\dotsc, X_m) \le \frac{1}{r}\sum_{I \in \II} H(\{X_i: i\in I\}).
\end{equation}
\emph{(since we did not preclude $\emptyset\in\II$, let us define that the entropy of an empty collection of random variables is zero).}
\end{thm}
\begin{proof}
  We prove the statement by induction on $r$. The case $r=1$ follows immediately from~\ref{item:entropy-prop-3} in Proposition~\ref{prop:basic_entropy_properties}.
  Suppose now that $r > 1$. If $\br{m} \in \II$, then we easily obtain~\eqref{eq:Shearer} invoking the inductive assumption with $\II$
  replaced by the $(r-1)$-cover $\II \setminus \{\br{m}\}$. Otherwise, assume $I_1, I_2 \in \II$ satisfy that both $I_1 \setminus I_2$
  and $I_2 \setminus I_1$ are non-empty. It follows
  from~\ref{item:entropy-prop-4} in
  Proposition~\ref{prop:basic_entropy_properties} that
\begin{equation}\label{eq:17.5}
H(I_1\setminus I_2\mid I_2)\le H(I_1\setminus I_2\mid I_1\cap I_2)
\end{equation}
where we denote $H(I)$ as a short for $H(\{X_i:i\in I\})$, and
similarly for conditioned entropies.
  Consequently, by~\ref{item:entropy-prop-1} in
  Proposition~\ref{prop:basic_entropy_properties},
\begin{multline*}
H(I_1\cup I_2)+H(I_1\cap I_2)
  \stackrel{\textrm{\ref{item:entropy-prop-1}}}{=}
H(I_1\setminus I_2 \mid I_2) + H(I_2) + H(I_1\cap I_2 ) \\
  \stackrel{\textrm{(\ref{eq:17.5})}}{\le}
H(I_1\setminus I_2 \mid I_1\cap I_2) + H(I_2) + H(I_1\cap I_2 )
  \stackrel{\textrm{\ref{item:entropy-prop-1}}}{=}
H(I_1)+H(I_2)
\end{multline*}
  If we now replace $I_1$ and $I_2$ with $I_1 \cup I_2$ and $I_1 \cap I_2$, then $\II$ remains an $r$-cover and the sum in the right-hand side
  of~\eqref{eq:Shearer} can only decrease. It is clear that after a finite number of such modifications we will eventually arrive at the case when
  $\br{m}\in \II$.
\end{proof}
We remark that, due to the fact that differential entropy may be negative and unlike the Shannon entropy case (the entropy of discrete random variables), inequality \eqref{eq:Shearer} need not hold when the equals sign in the $r$-cover condition \eqref{eq:r_cover} is changed to a greater-or-equal sign.

\subsection{A triangle inequality for the Kullback--Leibler divergence}

The proof of Theorem~\ref{thm:volume_estimate} will require the following simple `triangle inequality' for Kullback--Leibler divergences.

\begin{lem}
  \label{lem:DKL-triangle-ineq}
  Suppose that $X$, $Y$, and $Z$ are $\R$-valued random variables with a joint distribution in $\BDM(\R^3)$. Then
 \begin{multline*}
   \DKL{(X,Y,Z)}{X\times Y\times Z} \\
   \le \DKL{(X,Y,Z)}{X\times (Y,Z)} + \DKL{(X,Y,Z)}{(X,Y)\times Z}.
 \end{multline*}
\end{lem}
\begin{proof}
  The definition of Kullback--Leibler divergence gives
  \begin{multline*}
    \DKL{(X,Y,Z)}{X\times Y\times Z} \\
    =\DKL{(X,Y,Z)}{X\times (Y,Z)} + \DKL{(Y,Z)}{Y\times Z}.
  \end{multline*}
  The second term in the right-hand side may be bounded from above, using~\eqref{eq:entropy_KL_relation} and Lemma~\ref{prop:basic_entropy_properties}~\ref{item:entropy-prop-4}, as follows:
  \begin{align*}
    \DKL{(Y,Z)}{Y\times Z} & = H(Z) - H(Z \mid Y) \le H(Z) - H(Z \mid X,Y) \\
    & = \DKL{(X,Y,Z)}{(X,Y)\times Z}.\qedhere
  \end{align*}
\end{proof}

\subsection{Relations between entropy and independence}
\label{sec:entr-total-vari}

As explained above, a key step in our proof of
Theorem~\ref{thm:volume_estimate} is showing that the individual
distances in a uniformly sampled metric space from $\M_n$ become
almost independent random variables after we condition on the values
of some small fraction of all $\binom{n}{2}$ distances. We shall
establish this almost independence property by bounding the
entropies of various vectors of distances in the random metric
space. The connection between almost independence and entropy will
be provided by the following lemma, relating the Kullback--Leibler divergence of two measures with the difference of their supports. The lemma will be used to bound from above the error term in the upper bound on entropy given by Theorem~\ref{thm:entropy_bound_for_almost_independent}, see Claim~\ref{claim:PXnotinM3}.
\begin{lem}\label{lem:Kullback_Leibler_and_support}
  Let $\mu, \nu$ be probability measures
  in $\BDM(\R^k)$. Then
  \begin{equation}
    \label{eq:mu-nu-supp-diff}
    \DKL{\mu}{\nu}\ge \sup\{\nu(A) : A\subseteq\R^k\text{ Borel satisfying }
    \mu(A) = 0\}.
  \end{equation}
\end{lem}
\begin{proof}
  Denote by $f(x)$ the density of $\mu$ and by $g(x)$ the density of $\nu$. Let $A\subseteq\R^k$ be a Borel subset and suppose that $\mu(A)
  = 0$. Recalling that $\log(y)\ge 1 - \frac{1}{y}$ for all $y>0$ we
  conclude that
  \begin{align*}
    \DKL{\mu}{\nu} &= \int \log\left( \frac{f(x)}{g(x)}\right)f(x)\,dx = \int_{A^c} \log\left(
    \frac{f(x)}{g(x)}\right)f(x)\,dx\\
    &\ge \int_{A^c} \left(1 -
    \frac{g(x)}{f(x)}\right)f(x)\,dx = 1 - \nu(A^c) = \nu(A).\qedhere
  \end{align*}
\end{proof}

Observe that the quantity in the right-hand side of~\eqref{eq:mu-nu-supp-diff} is a lower bound on the \emph{total variation distance} between $\mu$ and $\nu$. Therefore, it seems natural to relate it to the Kullback--Leibler divergence between $\mu$ and $\nu$ using Pinsker's inequality~\cite{Pin60}, which states that\footnote{Originally, Pinsker proved~\eqref{eq:Pinsker} with the multiplicative constant $2$ replaced by $1/(2\log 2)$. The version stated in~\eqref{eq:Pinsker} was obtained somewhat later by Csisz\'ar~\cite{csiszar_inequality1966}, Kemperman~\cite{Ke69}, and Kullback~\cite{Ku67}.}
\begin{equation}
  \label{eq:Pinsker}
  \DKL{\mu}{\nu} \ge 2 \left(\TV{\mu}{\nu}\right)^2.
\end{equation}
This, in fact, was done in our proof of an earlier, weaker version of Theorem~\ref{thm:volume_estimate}.  While~\eqref{eq:Pinsker} is optimal for certain pairs of $\mu$ and $\nu$, in our setting, the more specialised Lemma~\ref{lem:Kullback_Leibler_and_support} yields much better dependence between the two quantities involved. Similar considerations are discussed in~\cite{EllFriKinYeh15}, which also uses a version of Lemma~\ref{lem:Kullback_Leibler_and_support} in place of Pinsker's inequality.

\section{Entropy-maximising product distributions}\label{sec:entropy-maximising product distributions}

The first part of this section is devoted to deriving an upper bound on the largest entropy
of a vector of independent random variables that is almost supported in a given compact, convex subset $\poly \subseteq \R^d$.
It turns out that this largest entropy is close to the logarithm of the largest volume of a box that is fully contained in $\poly$.
In a short, second part of the section, we compute this volume in the specific case that $\poly$ is the (closure of the) 3-dimensional metric polytope $\M_3$.
The results of this section are a central ingredient in the proof of the volume upper bound in Theorem~\ref{thm:volume_estimate}.

\subsection{Entropy-maximising product distributions on convex sets}

The following theorem is the main result of this section.

\begin{thm}\label{thm:entropy_bound_for_almost_independent}
Let $M>0$ and let $\poly\subseteq[-M,M]^d$, $d\ge 2$, be a closed, convex
set with non-empty interior. Let $V_0$ be the maximal volume of an axis-parallel box fully contained
in $\poly$, that is,
\begin{equation}\label{eq:maximal volume of box in polytope}
  V_0 := \max\left\{\prod_{i=1}^d (b_i - a_i) : [a_1,b_1]\times\cdots\times[a_d,b_d]\subseteq \poly\right\}.
\end{equation}
There exists a finite $C = C(M,\poly)$ such that the following holds. Suppose
$X_1,\dotsc, X_d$ are \emph{independent} random variables with
bounded densities supported in $[-M,M]$. Then
\begin{equation*}
  H(X_1, \dotsc, X_d) = \sum_{i=1}^d H(X_i)\le \log(V_0) + C\cdot\P\big((X_1,\dotsc,X_d)\notin\poly\big)^{1/d}.
\end{equation*}
\end{thm}
Let us comment on the assumptions and the conclusion of the theorem. First,
since we will only use this result for a very specific $\poly$ (the closure of the 3-dimensional metric polytope $\M_3$), we do not need the exact dependence of $C$ on $\poly$, but let us nonetheless note that $C$ depends only on $d$, $M$, and $V_0$.

 The assumption $d \ge 2$ is required for the conclusion. Indeed, when $d=1$, there exist examples where the error term has an additional logarithmic factor (see~\eqref{eq:poly theorem weaker bound} below for a complementary upper bound). To see this, consider $\poly = [0,1] \subseteq [-2,2]$ and let $X_1$ be a random variable that, with probability $1-\eps$, is uniform on $\poly$ and, with probability $\eps$, is uniform on $[1,2]$. Here, $\P(X_1 \notin \poly) = \eps$ whereas $H(X_1) = (1-\eps)\log(1/(1-\eps)) + \eps \log(1/\eps) \ge \log(V_0) + \eps \log(1/\eps)$.

 Aside from the constant factor $C$, the dependence of the error term on $\P\big((X_1, \dotsc, X_d) \notin \poly\big)$ is optimal. To see this, consider the simplex
\[
  \poly := \left\{(x_1, \dotsc, x_d) \in [0,d]^d : x_1 + \dotsb + x_d \le d\right\}\subseteq[-d,d]^d.
\]
The AM--GM inequality implies that $[0,1]^d$ is the largest box contained in $\poly$ and thus $\log(V_0) = 0$. Let $X_1, \dotsc, X_d$ be i.i.d.\ random variables distributed uniformly on the interval $[0,1+\delta]$, for some $\delta \le 1/d$. On the one hand, we have
\[
  \P\big((X_1, \dotsc, X_d) \notin \poly\big) \le \P\big(\min_i |X_i| > 1-(d-1)\delta\big) \le (d \delta)^d.
\]
On the other hand,
\[
  H(X_1, \dotsc, X_d) = d\cdot H(X_1) = d\log(1+\delta) \ge d\delta/2.
\]
Thus, $H(X_1, \dotsc, X_d) \ge \log(V_0) + \P\big((X_1, \dotsc, X_d) \notin \poly\big)^{1/d}/2$.

The first step in our proof of Theorem~\ref{thm:entropy_bound_for_almost_independent} is Lemma~\ref{lem:independent-box-poly}, below. The lemma supplies an axis-parallel box fully contained in $\poly$ that supports most of the distribution of the vector $(X_1, \dotsc, X_d)$. The existence of such a box already implies an upper bound on $H(X_1, \dotsc, X_d)$ that differs from the one stated in Theorem~\ref{thm:entropy_bound_for_almost_independent} by an extra logarithmic factor in the error term (see~\eqref{eq:poly theorem weaker bound}). The proof of the lemma is short; following it, the bulk of the proof of Theorem~\ref{thm:entropy_bound_for_almost_independent} is devoted to removing this superfluous logarithmic term. (If one substitutes the bound~\eqref{eq:poly theorem weaker bound} for Theorem~\ref{thm:entropy_bound_for_almost_independent} in the argument presented in Section~\ref{sec:volume-upper-bound}, one obtains the following weaker version of the upper bound in Theorem~\ref{thm:volume_estimate}: $\vol(\M_n)\le \exp\big(C(n\log n)^{3/2}\big)$.)

The second step in the proof of Theorem~\ref{thm:entropy_bound_for_almost_independent} is Proposition~\ref{prop:entropy_bound_for_almost_independent}, below. The proposition (combined with Lemma~\ref{lem:independent-box-poly}) may be regarded as a strengthening of the conclusion of the theorem. The extra information it provides will be used in our analysis of the minimum distance in a typical sample from the metric polytope (Theorem~\ref{thm:minimal_distance}).

\medskip
We start the proof of Theorem~\ref{thm:entropy_bound_for_almost_independent} with several definitions that we will use throughout.

Let $M>0$ and fix a closed convex set $\poly\subseteq[-M,M]^d$ with non-empty interior. At this point the dimension is allowed to be any $d\ge 1$ but the restriction $d\ge 2$ will be placed in Proposition~\ref{prop:entropy_bound_for_almost_independent}. Write $V_0$ for the maximal volume of an axis-parallel box fully contained in $\poly$, defined formally in~\eqref{eq:maximal volume of box in polytope}. Our assumptions on $\poly$ imply that $V_0>0$. Let $X_1,\dotsc, X_d$ be \emph{independent} random variables with bounded densities supported in $[-M,M]$. Define
  \begin{equation}\label{eq:eps def for poly}
    \eps := \P\big((X_1,\dotsc,X_d)\notin\poly\big)^{1/d},
  \end{equation}
so that our goal is to show that, for a finite $C = C(M,\poly)$,
\begin{equation}\label{eq:entropy_bound_goal}
  H(X_1, \dotsc, X_d) = \sum_{i=1}^d H(X_i)\le \log(V_0) + C\eps.
\end{equation}
We may (and will) assume without loss of generality that $\eps\le \frac{1}{6}$, as otherwise the statement follows by taking $C=6(d \log(2M) -\log(V_0)) \ge 0$ (as $H(X_i) \le \log(2M)$ by Lemma~\ref{lem:entropy-compact-support} and $V_0 \le (2M)^d$).

For each $i \in \br{d}$, define the upper and lower $\eps$-quantiles of the distribution of $X_i$,
\begin{equation}\label{eq:LQ UQ def}
\begin{split}
  a_i & := \sup\{a \in \R : \P(X_i < a) \le \eps\}, \\
  b_i & := \inf\{b \in \R : \P(X_i > b) \le \eps\},
\end{split}
\end{equation}
so that
\begin{equation}
  \label{eq:X-quantiles}
  \P\big(X_i \le a_i\big) = \P\big(X_i \ge b_i\big) = \eps.
\end{equation}
In particular, the interval $[a_i,b_i]$ is nonempty by our assumption that $\eps\le \frac{1}{6}$. Denote the volume spanned by these intervals by
\begin{equation}\label{eq:V def}
  V := \vol\big([a_1,b_1] \times \dotsb \times [a_d,b_d]\big) = \prod_{i=1}^d (b_i-a_i).
\end{equation}
Finally, writing $f_i$ for the density of $X_i$, let
\begin{equation}\label{eq:entropy portion}
  H(X_i;A) := -\int_A f_i(x)\log(f_i(x))dx
\end{equation}
be the contribution to the differential entropy of $X_i$ from the measurable set $A$.

  Our first lemma shows that the box spanned by the intervals $([a_i, b_i])$ is fully contained in $\poly$.
\begin{lem}
  \label{lem:independent-box-poly}
  In every dimension $d\ge 1$,
  \begin{equation}\label{eq:box in polytope}
    [a_1,b_1] \times \dotsb \times [a_d, b_d] \subseteq \poly.
  \end{equation}
  In particular, $V\le V_0$.
\end{lem}
\begin{proof}
  Suppose, to obtain a contradiction, that
  \eqref{eq:box in polytope} fails. Hence, as $\poly$ is closed, there exists $(x_1,
  \dotsc, x_d)\notin \poly$ with $a_i<x_i<b_i$ for all $i$. This implies, as $\poly$ is convex, that there is a choice of signs $(s_1,\ldots, s_d)\in\{-1,1\}^d$ so that the orthant
  \begin{equation*}
    O := \big\{(y_1,\dotsc, y_d) : s_i(y_i - x_i)\ge 0 \text{ for all $i \in \br{d}$}\big\}
  \end{equation*}
  does not intersect $\poly$. In particular,
  \begin{align*}
    \eps^d & = \P\big((X_1,\dotsc,X_d)\notin\poly\big)\ge \P\big((X_1,\dotsc,X_d)\in
    O\big)\\
    &\ge \prod_{i=1}^d \min\{\P(X_i \le x_i),\P(X_i\ge x_i)\}\\
    &>\prod_{i=1}^d \min\{\P(X_i \le a_i),\P(X_i\ge b_i)\} =
    \eps^d,
  \end{align*}
  where the strict inequality uses that $a_i<x_i<b_i$ and the definition~\eqref{eq:LQ UQ def}.
  This contradiction shows that~\eqref{eq:box in polytope} must in fact hold.

  The volume statement is now deduced from the definition of $V_0$.
\end{proof}
We digress from the proof of Theorem~\ref{thm:entropy_bound_for_almost_independent} to note that Lemma~\ref{lem:independent-box-poly} implies the following version of the theorem, which holds in every dimension $d\ge 1$ but has an extra logarithmic factor in the error term,
\begin{equation}\label{eq:poly theorem weaker bound}
  \sum_{i=1}^d H(X_i) \le \log(V_0) + C\eps\log(2/\eps)
\end{equation}
with $\eps$ as in~\eqref{eq:eps def for poly} and $C = C(d,M,V_0)$ finite. The case $\eps>\frac{1}{6}$ is handled directly as before. For $\eps\le \frac{1}{6}$, note first that, for some $C' = C'(d, M)$,
  \begin{equation}\label{eq:weaker upper bound on entropy}
    \sum_{i=1}^d H(X_i) \le (1-2\eps)\log(V) + C'\eps\log(2/\eps)
  \end{equation}
  Indeed, for each $i \in \br{d}$, by~\eqref{eq:X-quantiles} and Lemma~\ref{lem:entropy_bound_for_sub_probability},
  \begin{equation*}
  \begin{split}
    H(X_i) & = H(X_i;[-M,M]\setminus[a_i,b_i]) + H(X_i;[a_i,b_i])\\
    & \le 2\eps \log\left(\frac{2M-(b_i-a_i)}{2\eps}\right) +
    (1-2\eps)\log\left(\frac{b_i-a_i}{1-2\eps}\right)\\
    & \le (1-2\eps)\log(b_i-a_i) + 2\eps\log\left(\frac{M}{\eps}\right) +
      (1-2\eps)\log\left(\frac{1}{1-2\eps}\right)
  \end{split}
  \end{equation*}
  and the bound~\eqref{eq:weaker upper bound on entropy} follows by summing this estimate over all $i$. To deduce~\eqref{eq:poly theorem weaker bound}, replace $V$ by $V_0$ using Lemma~\ref{lem:independent-box-poly} and absorb the factor $2\eps\log(V_0)$ in the error term.

We return to the proof of Theorem~\ref{thm:entropy_bound_for_almost_independent} and will show the following key proposition.

\begin{prop}\label{prop:entropy_bound_for_almost_independent}
In dimensions $d\ge 2$, there exists a finite $C = C(M,\poly)$ such that
\begin{equation*}
  \sum_{i=1}^d H(X_i) \le \frac{1}{2}\left(\log(V_0) + \sum_{i=1}^d H(X_i; [a_i, b_i])\right) + C\eps.
\end{equation*}
\end{prop}

It will be convenient to denote by $c$ and $C$ finite positive constants which depend only on $d$, $M$, and $V_0$. These constants, and their numbered versions, may change from line to line.

Let us see how Proposition~\ref{prop:entropy_bound_for_almost_independent} and Lemma~\ref{lem:independent-box-poly} imply Theorem~\ref{thm:entropy_bound_for_almost_independent}. Combining the proposition with Lemma~\ref{lem:entropy_bound_for_sub_probability}, recalling~\eqref{eq:X-quantiles},~\eqref{eq:V def}, and our assumption that $\eps\le\frac{1}{6}$, and applying Lemma~\ref{lem:independent-box-poly} we have
\begin{equation*}
\begin{split}
  H(X_1, \dotsc, X_d) &\le \frac{1}{2}\left(\log(V_0) + (1-2\eps)\sum_{i=1}^d \log\left(\frac{b_i-a_i}{1-2\eps}\right)\right) + C\eps\\
  &\le \frac{1}{2}\big(\log(V_0) + (1-2\eps) \log(V)\big) + C\eps\le \log(V_0) + C\eps.
\end{split}
\end{equation*}

\begin{proof}[Proof of Proposition~\ref{prop:entropy_bound_for_almost_independent}]
  There are three constants in the proof that deserve their own letters, $\beta$, $\mu$, and $K$, also depending only on $d$, $M$, and $V_0$. We will not specify these explicitly, and only point out here that we first choose $\beta$ (small), then $\mu$ (even smaller), and then $K$ (very large). In symbols (treating $d$, $M$, and $V_0$ as constants),
\[
  K^{-1} \ll \mu \ll \beta \ll 1.
\]

Let us introduce the following quantity,
\[
  t:=\sup\left\{s > 0:\exists i \; \max\left\{\P(X_i\le a_i-s), \P(X_i\ge b_i+s)\right\} \ge \frac{K\eps^2}{s}\right\}
\]
where we set $t = 0$ if the above set is empty. As each $X_i$ is supported on $[-M,M]$, we have $t \le 2M$. On the other hand, by the definition of $a_i$ and $b_i$, see~\eqref{eq:X-quantiles}, either $t = 0$ or $t \ge K\eps$.

The proposition is obtained by summing the following inequalities:
\begin{multline}
  \label{eq:X_i entropy on sides}
  M_t := \sum_{i=1}^d H(X_i;[a_i-t, a_i]\cup[b_i, b_i+t]) + \frac{1}{2}H(X_i;[a_i,b_i]) \\
  \le \frac{1}{2}\log(V_0) + C\eps.
\end{multline}
and, for each $i\in\br{d}$,
\begin{equation}\label{eq:X_i entropy on tail}
  H(X_i; \R\setminus[a_i - t, b_i + t])\le C\eps.
\end{equation}
We first prove~\eqref{eq:X_i entropy on tail}. For each $k \ge 1$, let $\lambda_k$ be the Lebesgue measure of the set
  \[
    \{x \notin [a_i-t,b_i+t] : e^{-k} \le f_i(x) \le e^{-k+1}\}
  \]
  and note that, as $-y\log y<0$ for $y>1$,
  \[
    H(X_i; \R\setminus[a_i - t, b_i + t]) \le \sum_{k=1}^\infty ke^{-k+1} \lambda_k.
  \]
  For every $k \ge 1$,
  \[
    \begin{split}
      \lambda_k & \le 2\eps k^2 + e^k \cdot \left(\P(X_i \le a_i - t - \eps k^2) + \P(X_i \ge b_i+t+\eps k^2)\right) \\
      & \stackrel{(*)}{\le} 2 \eps k^2 + e^k \cdot \frac{2K\eps^2}{t + \eps k^2} \le \left(2k^2 + \frac{2Ke^k}{k^2}\right) \cdot \eps
    \end{split}
  \]
  where $(*)$ follows from the definition of $t$. Hence
  \[
    \sum_{k=1}^\infty ke^{-k+1}\lambda_k \le \left(\sum_{k=1}^\infty \frac{2k^3}{e^{k-1}} + \sum_{k=1}^\infty \frac{2eK}{k^2}\right) \cdot \eps \le C\eps,
  \]
  which proves~\eqref{eq:X_i entropy on tail}.

  It remains to argue that~\eqref{eq:X_i entropy on sides} holds as well. For this we apply Lemma~\ref{lem:entropy_bound_for_sub_probability} and get for each $i$ (using also $\log y\le y-1$ for $y>0$),
  \begin{equation}
    \label{eq:I-ai-bi}
    \begin{split}
      H(X_i;[a_i,b_i]) & \le (1-2\eps)\log\left(\frac{b_i-a_i}{1-2\eps}\right) \\
      & \le (1-2\eps)\log(b_i-a_i) + 2\eps
    \end{split}
  \end{equation}
  and, if $t > 0$,
  \begin{equation}
    \label{eq:I-ai-t-bi-t}
    H(X_i;[a_i-t,a_i]\cup[b_i, b_i+t]) \le 2\eps\log\left(\frac{t}{\eps}\right),
  \end{equation}
  where we used the fact that $t \ge K\eps \ge e\eps$, which implies that the function $\delta \mapsto \delta \log(t/\delta)$ is increasing for $\delta \in [0,\eps]$. We split the remainder of the argument into two cases, depending on how close $V$, the volume of $[a_1, b_1] \times \dotsb \times [a_d, b_d]$, is to $V_0$.

  \subsubsection*{Case 1.}
  We first assume that
  \begin{equation}
    \label{eq:cube-small}
    \log(V) \le \log(V_0) - \beta t.
  \end{equation}
  In this case, summing~\eqref{eq:I-ai-t-bi-t} and half of~\eqref{eq:I-ai-bi} over all $i$ gives
  \begin{align*}
    M_t & \le \frac{1}{2}(1-2\eps) \sum_{i=1}^d \log(b_i-a_i) + d\eps + 2d\eps\log\left(\frac{t}{\eps}\right) \cdot \1_{\{t > 0\}}\\
        & \le \frac{1}{2}(1-2\eps)\left(\log(V_0) - \beta t\right) + d\eps + 2d\eps\log\left(\frac{t}{\eps}\right) \cdot \1_{\{t > 0\}}\\
        & \le \frac{1}{2}\log(V_0) + \left(d + 2d\log\left(\frac{t}{\eps}\right) \cdot \1_{\{t > 0\}} - \frac{\beta}{4} \cdot \frac{t}{\eps} - \log(V_0)\right) \cdot \eps.
  \end{align*}
  The claimed estimate~\eqref{eq:X_i entropy on sides} now follows as, for every $c > 0$, the function $y \mapsto \log(y) - cy$ is bounded from above.

  \subsubsection*{Case 2.}
  Assume now that~\eqref{eq:cube-small} does not hold. This means, in particular, that $t > 0$ (due to Lemma~\ref{lem:independent-box-poly}). Since our variables are continuous,
  \[
    \max\left\{\P(X_i\le a_i-t), \P(X_i\ge b_i+t)\right\} = \frac{K\eps^2}{t}
  \]
  for some index $i$. By permuting and reflecting the coordinates, if necessary, we may assume that
  \begin{equation}\label{eq:def t}
    \P(X_1\le a_1-t)=K\eps^2/t.
  \end{equation}
  We claim that
  \begin{equation}
    \label{eq:box-modified-not-in-poly}
    [a_1- t,b_1]\times\prod_{i=2}^d[a_i+\mu t,b_i-\mu t]\nsubseteq\poly.
  \end{equation}
  Indeed, if this were not true, then
  \begin{equation}
    \label{eq:case-2-vol-V-modified}
    \log(V_0) - \log(V) \ge \log\left(\frac{b_1-a_1+t}{b_1-a_1}\right) +\sum_{i=2}^d\log\left(\frac{b_i-a_i-2\mu t}{b_i-a_i}\right).
  \end{equation}
  Since~\eqref{eq:cube-small} does not hold, we have
  \[
    \min_i (b_i-a_i) \ge \frac{V}{(2M)^{d-1}} \ge \frac{e^{-\beta t} \cdot V_0}{(2M)^{d-1}} \ge \frac{V_0}{(2M)^d},
  \]
  where the last inequality holds as $t \le 2M$ and $\beta$ is small. It follows that the first term in the right-hand side of~\eqref{eq:case-2-vol-V-modified} is at least $c_1t$, for some positive constant $c_1=c_1(d, M, V_0)$ (independent of $\beta$ as long as $\beta$ is small), and, if $\mu$ is sufficiently small, each of the $d-1$ summands is at least $-C_1\mu t$, for some positive constant $C_1 = C_1(d, M, V_0)$. In particular, if $\beta$ and $\mu$ are sufficiently small, then the right-hand side of~\eqref{eq:case-2-vol-V-modified} is larger than $\beta t$, contradicting our assumption.

  Let $x$ be some point demonstrating~\eqref{eq:box-modified-not-in-poly}, namely
  \[
    x\in \left([a_1- t,b_1]\times\prod_{i=2}^d[a_i+\mu t,b_i-\mu t]\right)\setminus\poly,
  \]
  and notice that $x_1\in[a_1-t,a_1)$, as $\prod_i [a_i,b_i] \subseteq \poly$ by Lemma~\ref{lem:independent-box-poly}. Since $\poly$ is convex there is a hyperplane separating $x$ from $\poly$, that is, a vector $v$ such that
  \begin{equation}\label{eq:separating}
    \forall y \in \poly \quad \langle v, x \rangle < \langle v, y \rangle.
  \end{equation}

  First, let us apply \eqref{eq:separating} to $y=(a_1,x_2,\dotsc,x_d)$, which we may since $x_i\in[a_i,b_i]$ for all $i\ge 2$. We get $v_1(a_1-x_1)> 0$, so $v_1>0$ and we may normalise $v$ to assume $v_1=1$. Moreover, by permuting the coordinates, if necessary, we may assume that $v_i \ge 0$ for $i \in \{2, \dotsc, j\}$ and $v_i < 0$ for $i \in \{j+1, \dotsc, d\}$. We may also assume that $j \ge 2$, the complementary case $j+1 = 2$ being essentially identical. (Note that we do use the assumption that $d \ge 2$ here.) These assumptions imply that
  \[
    (-\infty, a_1-t] \times (-\infty,a_2 + \mu t] \times \prod_{i = 3}^j (-\infty, a_i] \times \prod_{i = j+1}^d [b_i, \infty)
  \]
  is disjoint from $\poly$. Indeed, if $z$ is an arbitrary point in this orthant, we have $v_iz_i \le v_ix_i$ for each $i$ and hence $\langle v, z \rangle < \langle v, y \rangle$ for all $y\in\poly$. It follows that
  \begin{multline*}
    \P(X_1 \le a_1-t) \cdot \P(a_2 \le X_2 \le a_2 + \mu t) \\
    \cdot \prod_{i=3}^j \P(X_i \le a_i) \cdot \prod_{i=j+1}^d \P(X_i \ge b_i) \le \eps^d,
  \end{multline*}
  and therefore, as $\P(X_i \le a_i) = \P(X_i \ge b_i) = \eps$,
  \[
    \P(a_2 \le X_2 \le a_2+\mu t) \le  \frac{\eps^2}{\P(X_1 \le a_1-t)}
    \stackrel{\textrm{(\ref{eq:def t})}}{=}
    \frac{t}{K}.
  \]
  Thus we have shown some inhomogeneity in the distribution of $X_2$. If we require $K>10(b_2-a_2)/\mu(1-2\eps)$, then this inhomogeneity is strong enough to apply \eqref{eq:entropy-bound-non-uniform-sub-probability} in Lemma~\ref{lem:entropy_bound_for_sub_probability},
so let us make this requirement. We get
  \begin{align}
      H(X_2;[a_2,b_2]) & \le (1-2\eps)\log\left(\frac{b_2-a_2}{1-2\eps}\right) - \frac{1-2\eps}{4} \cdot \frac{\mu t}{b_2-a_2} \nonumber\\
      & \le (1-2\eps)\log(b_2-a_2) + 2\eps - \frac{\mu}{20M} \cdot t.    \label{eq:int-a2-b2}
  \end{align}
  Summing~\eqref{eq:I-ai-t-bi-t} and half of~\eqref{eq:I-ai-bi} over all $i$, as in Case 1, but using the improved estimate~\eqref{eq:int-a2-b2} in place of~\eqref{eq:I-ai-bi} when $i=2$ yields
  \[
    M_t  \le \frac{1}{2}(1-2\eps) \log(V) + d\eps + 2d\eps\log\left(\frac{t}{\eps}\right) - \frac{\mu}{40M} \cdot t.
  \]
  A calculation similar to the one done in Case 1, using that $V\le V_0$ by Lemma~\ref{lem:independent-box-poly}, yields the upper bound~\eqref{eq:X_i entropy on sides} on $M_t$. This completes the proof of the proposition (and hence also of Theorem~\ref{thm:entropy_bound_for_almost_independent}).
\end{proof}

\ifx\dontshowuselessoldcrap\undefined

  Now, \eqref{eq:entropy-on-ap-bp} follows by summing~\eqref{eq:int-a-b}, \eqref{eq:int-aip-ai}, and \eqref{eq:int-bi-bip} over all $i$, but using the stronger estimate~\eqref{eq:int-a2-b2} in place of~\eqref{eq:int-a-b} when $i=2$.

  ***************
  In order to remove the superfluous logarithmic term, we shall assume now that $d \ge 2$ and that $\poly$ is a polytope.
  In particular, there is a \emph{finite} set $V \subseteq \left(\R^d \setminus \{0\}\right) \times \R$ such that
  \[
  \poly = \bigcap_{(v,\tau) \in V} \left\{x \in \R^d : \sum_{i=1}^d v_ix_i \ge \tau\right\}.
  \]
  Let $\rho$ be the smallest (in absolute value) nonzero ratio of the coordinates of a normal vector defining $\poly$. In other words,
  \[
  \rho := \min\left\{|v_i| / |v_j| : \big((v_1, \dotsc, v_d), \tau\big) \in V \text{ and $i, j \in [d]$ satisfy $v_i, v_j \neq 0$}\right\} \le 1.
  \]
  Moreover, let $K = K(\rho,d,M)$ be sufficiently large and define for each $1 \le i \le d$,
  \begin{equation}\label{eq:ap_i_bp_i_def}
    \begin{split}
      a_i' &:= \inf \left\{a \in [-M, a_i) : \P(X_i \le a) > K\eps^2 / (a_i-a) \right\},\\
      b_i' &:= \sup \left\{b \in (b_i,M] : \P(X_i \ge b) > K\eps^2 / (b-b_i) \right\},
    \end{split}
  \end{equation}
  where $a_i' = a_i$ (resp.\ $b_i' = b_i$) if the set in the infimum (resp.\ supremum) is empty.

  Since for every $1 \le i \le d$, the definition of $a_i'$ and $b_i'$ yields $\P(X_i \le a) \le K\eps^2 / (a_i-a)$
  for all $a \le a_i'$ and $\P(X_i \ge b) \le K\eps^2 / (b-b_i)$ for all $b \ge b_i'$,
  Lemma~\ref{lem:entropy_bound_under_tail_estimate} implies that
  \[
  -\int_{-M}^{a_i'} f_i(x)\log(f_i(x)) dx - \int_{b_i'}^M f_i(x)\log(f_i(x))dx \le 2C\sqrt{K}\eps.
  \]
  Therefore, we only need to show that
  \begin{equation}
    \label{eq:entropy-on-ap-bp}
    - \sum_{i=1}^d \int_{a_i'}^{b_i'} f_i(x)\log(f_i(x))dx \le \log(V_0) + C\eps.
  \end{equation}
  As Lemma~\ref{lem:entropy_bound_for_sub_probability} yields
  \begin{equation}
    \label{eq:int-a-b}
    \begin{split}
      -\int_{a_i}^{b_i} f_i(x)\log(f_i(x))dx & \le \P(X_i\in[a_i,b_i])\log\left(\frac{\vol([a_i,b_i])}{\P(X_i\in[a_i,b_i])}\right) \\
      & \le (1-2\eps)\log(b_i-a_i) - \log\left(1-2\eps\right)
    \end{split}
  \end{equation}
  and $\sum_{i=1}^d \log(b_i - a_i) \le \log(V_0)$ by~\eqref{eq:box_in_poly}, we shall be focusing on estimating the integrals of $f_i\log(f_i)$ on the sets $[a_i',a_i] \cup [b_i,b_i']$.

  To this end, let us define
  \[
  t := \max\left\{\max\{b_i'-b_i, a_i-a_i'\} : i \in [d]\right\}.
  \]
  In the easy case $t = 0$, we have $a_i' = a_i$ and $b_i' = b_i$ and therefore we easily obtain~\eqref{eq:entropy-on-ap-bp}
  by summing~\eqref{eq:int-a-b} over all $i$, as $\log(1-2\eps) \ge -3\eps$ by our assumption that $\eps \le 1/6$.

  Hence, from now one we shall assume that $t > 0$. Without loss of generality, we may further assume that $t = a_1 - a_1'$,
  as otherwise we can permute the coordinates (which results in the same permutation of the indices of $a_i$, $b_i$, $a_i'$, and $b_i'$)
  or reflect them in zero (which results in exchanging the roles of $a_i$ and $a_i'$ with $b_i$ and $b_i'$).
  Note that
  \[
  \frac{K\eps^2}{t} = \frac{K\eps^2}{a_1-a_1'} \le \P(X_1 \le a_1') \le \P(X_1 \le a_1) = \eps,
  \]
  and hence $t \ge K\eps$. In particular, Lemma~\ref{lem:entropy_bound_for_sub_probability} implies that for every $1 \le i \le d$,
  \begin{equation}
    \label{eq:int-aip-ai}
    \begin{split}
      -\int_{a_i'}^{a_i} f_i(x) \log(f_i(x))dx & \le \P\left(a_i' \le X \le a_i\right) \log\left(\frac{a_i-a_i'}{\P\left(a_i' \le X_i \le a_i\right)}\right)  \\
      & \le \eps \log\left(\frac{t}{\eps}\right) \le \frac{t}{K} \log K \le \frac{\rho t}{50dM}
    \end{split}
  \end{equation}
  and similarly,
  \begin{equation}
    \label{eq:int-bi-bip}
    - \int_{b_i}^{b_i'} f_i(x)\log(f_i(x))dx \le \frac{\rho t}{50dM}.
  \end{equation}
  Let
  \[
  s := \max\left\{ s \ge 0 : [a_1-s,b_1] \times [a_2, b_2] \times \dotsb \times [a_d,b_d] \subseteq \poly\right\}.
  \]
  We shall split the remainder of the proof into two cases, depending on whether $s \ge t/2$.

  \medskip
  \noindent
  \textbf{Case 1. $s \ge t/2$.}
  \smallskip

  Since $(b_1-a_1+s) \cdot \prod_{i=2}^d (b_i-a_i) \le V_0$, then
  \begin{equation}
    \label{eq:sum-log-ai-bi}
    \begin{split}
      \sum_{i=1}^d \log(b_i-a_i) & \le \log(V_0) - \log\left(\frac{b_1-a_1+s}{b_1-a_1}\right) \le \log(V_0) - \log\left(1+\frac{s}{2M}\right) \\
      & \le \log(V_0) - \log\left(1+\frac{t}{4M}\right) \le \log(V_0) - \frac{t}{6M},
    \end{split}
  \end{equation}
  where the last inequality follows as clearly $t \le 2M$. Consequently, \eqref{eq:entropy-on-ap-bp} follows by summing~\eqref{eq:int-a-b}, \eqref{eq:int-aip-ai}, and \eqref{eq:int-bi-bip} over all $i$ and then invoking~\eqref{eq:sum-log-ai-bi}.

  \medskip
  \noindent
  \textbf{Case 2. $s < t/2$.}
  \smallskip

  As $[a_1-t/2,b_1] \times [a_2,b_2] \times \dotsb \times [a_d, b_d] \not\subseteq \poly$ by the definition of $s$, there must exist a $(v,\tau) \in V$ and $(x_1, \dotsc, x_d) \in \R^d$ such that $a_1 - t/2 \le x_1 \le b_1$ and $a_i \le x_i \le b_i$ for $i \in \{2, \dotsc, d\}$ such that
  \begin{equation}
    \label{eq:v-tau-violated}
    \sum_{i=1}^d v_ix_i < \tau.
  \end{equation}
  As $(a_1, x_2, \dotsc, x_d) \in \poly$ by~\eqref{eq:box_in_poly}, then necessarily $v_1 > 0$. Without loss of generality, we may assume that $v_i \ge 0$ for $i \in \{2, \dotsc, j\}$ and $v_i < 0$ for $i \in \{j+1, \dotsc, d\}$. We may also assume that $j \ge 2$, the complementary case $j+1 = 2$ being essentially identical. (Note that we do use the assumption that $d \ge 2$ here.) We claim that
  \begin{equation}
    \label{eq:modified-tail-outside-poly}
    (-\infty, a_1'] \times (-\infty,a_2 + \rho t/2] \times \prod_{i = 3}^j (-\infty, a_i] \times \prod_{i = j+1}^d [b_i, \infty) \subseteq \R^d \setminus \poly.
  \end{equation}
  Indeed, note first that by our definition of $j$, inequality~\eqref{eq:v-tau-violated} still holds if we decrease $x_i$ for $i \in \{1, \dotsc, j\}$ and increase $x_i$ for $i \in \{j+1, \dotsc, d\}$. Moreover, as $a_1' = a_1 - t$ and $v_1 \ge \rho v_2$ by the definition of $\rho$, then
  \[
  v_1 a_1' + v_2(a_2+\rho t/2) + \sum_{i=3}^d v_ix_i = v_1(a_1-t/2) + v_2a_2 + \sum_{i=3}^d v_ix_i +  (\rho v_2 - v_1) t/2 < \tau.
  \]
  It follows from~\eqref{eq:modified-tail-outside-poly} that
  \[
  \P(X_1 \le a_1') \cdot \P(a_2 \le X_2 \le a_2 + \rho t/2) \cdot \prod_{i=3}^j \P(X_i \le a_i) \cdot \prod_{i=j+1}^d \P(X_i \ge b_i) \le \eps^d,
  \]
  and therefore
  \[
  \P(a_2 \le X_2 \le a_2+\rho t/2) \le \frac{\eps^2}{\P(X_1 \le a_1')} \le \frac{a_1-a_1'}{K} = \frac{t}{K} \le \frac{1}{10} \cdot \frac{\rho t}{4M} \le \frac{1}{10} \cdot \frac{\rho t/2}{b_2-a_2}.
  \]
  Therefore, Lemma~\ref{lem:entropy_bound_for_non_uniform_sub_probability} implies the following improvement of the estimate~\eqref{eq:int-a-b}
  \begin{equation}
    \label{eq:int-a2-b2}
    \begin{split}
      -\int_{a_2}^{b_2} f_2(x)\log(f_2(x)) dx & \le (1-2\eps)\log(b_2-a_2) - \log(1-2\eps) - \frac{1-2\eps}{4} \cdot \frac{\rho t /2}{b_2-a_2} \\
      & \le (1-2\eps)\log(b_2-a_2) - \log(1-2\eps) - \frac{\rho t}{24M}.
    \end{split}
  \end{equation}
  Now, \eqref{eq:entropy-on-ap-bp} follows by summing~\eqref{eq:int-a-b}, \eqref{eq:int-aip-ai}, and \eqref{eq:int-bi-bip} over all $i$, but using the stronger estimate~\eqref{eq:int-a2-b2} in place of~\eqref{eq:int-a-b} when $i=2$.
\end{proof}
\fi

\subsection{The largest box in \texorpdfstring{$\M_3$}{M3}}

Theorem~\ref{thm:entropy_bound_for_almost_independent} will be applied to the (closure of the) 3-dimensional metric polytope. To this end, we study here the largest box contained in $\M_3$.

\begin{lem}\label{lem:independent_max_volume}
  Suppose that $P$ is an axis-parallel box contained in the closure of the metric polytope $\M_3$, that is,
  \[
  P = [a_1,b_1] \times [a_2,b_2] \times [a_3,b_3] \subseteq \overline{\M_3} \subseteq \R^{\binom{3}{2}} \cong \R^3.
  \]
  Then
  \begin{equation}\label{eq:vol_P_lemma_estimate}
  \vol(P) \le 1
  \end{equation}
   and equality holds if and only if
  $[a_i,b_i]=[1,2]$ for each $i \in \{1,2,3\}$.
Furthermore, for some absolute constant $C$,
\[
\sum_{i=1}^3 \big( |a_i - 1| + |b_i - 2| \big) \le C(1-\vol(P)).\]
\end{lem}
The furthermore clause is not required for our analysis of the volume of the metric polytope (Theorem~\ref{thm:volume_estimate}) but will be used in analysing the typical minimum distance (Theorem~\ref{thm:minimal_distance}).

The following proof was suggested to us by Shoni Gilboa; it replaced our previous, less transparent argument.

\begin{proof}[Proof of Lemma~\ref{lem:independent_max_volume}]
  Our assumption that $P \subseteq \overline{\M_3}$ implies that $b_1 \le a_2 + a_3$ and, similarly, $b_2 \le a_1 + a_3$ and $b_3 \le a_1 + a_2$. Summing these three inequalities yields
  \begin{equation}
    \label{eq:three-triangles}
    b_1+b_2+b_3 \le 2(a_1+a_2+a_3).
  \end{equation}
  Consequently, by the AM--GM inequality,
  \begin{equation}
    \label{eq:AMGM}
    \vol(P) = \prod_{i=1}^3 (b_i-a_i) \le \left(\sum_{i=1}^3\frac{b_i-a_i}{3}\right)^3 \le \left(\frac{b_1+b_2+b_3}{6}\right)^3 \le 1,
  \end{equation}
  where the second inequality is precisely~\eqref{eq:three-triangles} and the last inequality holds by our assumption that $P \subseteq \overline{\M_3} \subseteq [0,2]^3$, which ensures that $b_1, b_2, b_3 \le 2$.

  For the second and third assertions of the lemma, suppose that $\vol(P) \ge 1-\eps$ for some $\eps \ge 0$. Inequality~\eqref{eq:AMGM}
  implies that
  \begin{equation}
    \label{eq:AMGM-stability}
    \frac{b_1+b_2+b_3}{2} \ge \sum_{i=1}^3 (b_i-a_i) \ge 3(1-\eps)^{1/3} \ge 3(1-\eps)
  \end{equation}
  and, consequently, as $\max_i b_i \le 2$, that $\min_i b_i \ge 2 - 6\eps$. Moreover, summing the inequalities $a_1+a_2 \ge b_3$ and $a_1 + a_3 \ge b_2$, we obtain
  \[
    a_1 \ge b_2+b_3 - (a_1+a_2+a_3) = \sum_{i=1}^3 (b_i-a_i) - b_1 \ge 1 - 3\eps,
  \]
  where the last inequality follows from~\eqref{eq:AMGM-stability} and the assumption $b_1 \le 2$. By symmetry, $\min_i a_i \ge 1-3\eps$.
 Using again~\eqref{eq:AMGM-stability}, we have
  \[
    3(1-\eps) \le \sum_{i=1}^3 (b_i-a_i) \le 6 - 2\min_i a_i -a_1,
  \]
  giving $a_1 \le 1+9\eps$. By symmetry, $\max_i a_i \le 1+9\eps$. Hence
  \[
  \sum_{i=1}^3\big(|a_i-1|+|b_i-2|\big)\le\sum_{i=1}^3\big( 9\eps+6\eps \big)=45\eps,
  \]
  as needed.
\end{proof}

\section{Estimating the volume of the metric polytope}

In this section, we prove Proposition~\ref{prop:decreasing_radius} and Theorem~\ref{thm:volume_estimate}. The proof of Proposition~\ref{prop:decreasing_radius} is a short application of Shearer's inequality (Theorem~\ref{thm:Shearers_inequality}) and is presented in Section~\ref{sec:monotonicity}. The lower bound on the volume of $\M_n$ is obtained via the Local Lemma of Erd\H{o}s and Lov\'asz~\cite{ErLo75}; it is presented in Section~\ref{sec:lower}. The proof of the upper bound on the volume, whose outline is given in the introduction, is presented in Section~\ref{sec:volume-upper-bound}.

\subsection{Monotonicity}
\label{sec:monotonicity}

In this section, we prove Proposition~\ref{prop:decreasing_radius}. Recall
that said proposition states that the sequence $n \mapsto \vol(\M_n)^{1/\binom{n}{2}}$ is non-increasing.
We will show that this fact is a simple consequence of Shearer's inequality
(Theorem~\ref{thm:Shearers_inequality}). Alternatively, one can derive
it from a generalisation of the Loomis--Whitney inequality~\cite{LoWh49} due to
Bollob\'as and Thomason~\cite{BoTh95}.

\begin{proof}[{Proof of Proposition~\ref{prop:decreasing_radius}}]
  Let $n\ge 2$ and let $d$ be a uniformly sampled metric space in $\M_{n+1}$.
  By Lemma~\ref{lem:entropy-compact-support},
  \begin{equation}\label{eq:entropy_as_volume_of_M_n+1}
    H(d) = \log\left(\vol(\M_{n+1})\right).
  \end{equation}
  For each $i \in \br{n+1}$, let $J_i := \br{n+1}\setminus\{i\}$ and let
  $I_i$ be the set of all unordered pairs of distinct elements in
  $J_i$. Observe that, for each $\{j,k\}\in\binom{\br{n+1}}{2}$, we have
  \[
    |\{i \in \br{n+1} : \{j,k\}\in I_i\}| = n-1.
  \]
  Since $d$ may be naturally viewed as a random vector of distances, Shearer's inequality
  (Theorem~\ref{thm:Shearers_inequality}) implies that
  \begin{equation}\label{eq:Shearers_inequality_for_metric_space}
    H(d)\le \frac{1}{n-1}\sum_{i=1}^{n+1}H\big(\{\dist{j}{k}: \{j,k\}\in I_i\}\big).
  \end{equation}
  Finally, observe that for each $i \in \br{n+1}$, the restriction of
  $d$ to the pairs in $I_i$ is a metric space on
  $n$ points that belongs to $\M_n$. Thus, by
  Lemma~\ref{lem:entropy-compact-support},
  \begin{equation}\label{eq:entropy_at_most_volume_for_M_n}
    H\big(\{\dist{j}{k}: \{j,k\}\in I_i\}\big) \le \log(\vol(\M_n))
  \end{equation}
  Putting together \eqref{eq:entropy_as_volume_of_M_n+1},
  \eqref{eq:Shearers_inequality_for_metric_space}, and
  \eqref{eq:entropy_at_most_volume_for_M_n}, we conclude that
  \[
    \log(\vol(\M_{n+1})) \le \frac{n+1}{n-1}\log(\vol(\M_n)).
  \]
  Since this inequality holds for any $n \ge 2$ and $(n+1)/(n-1) = \binom{n+1}{2} / \binom{n}{2}$, we conclude that the
  sequence $n \mapsto \vol(\M_n)^{1/\binom{n}{2}}$ is non-increasing, as claimed.
\end{proof}

\subsection{Lower bound}
\label{sec:lower}

In this section, we prove the lower bound on $\vol(\M_n)$ from Theorem~\ref{thm:volume_estimate}. Recall that it was
$\vol(\M_n)\ge \exp((\nicefrac{1}{6}+o(1))n^{3/2})$. The proof below, which uses the Local Lemma of Erd\H{o}s and Lov\'asz~\cite{ErLo75}, is due to Dor Elboim (our original argument, based on Harris's inequality~\cite{harris1960lower}, gave $\nicefrac{1}{24}$ instead of $\nicefrac{1}{6}$).

We may and will assume that $n$ is sufficiently large. Let $\delta = 1/(2\sqrt{n})$ and let $(\dist{i}{j})$, $\{i,j\}\in\binom{\br{n}}{2}$, be an array
of independent and identically distributed random variables, each uniform on the interval $\left[1 - \delta, 2\right]$.
Define the event
\[
G:=\{(\dist{i}{j})\in\M_n\}.
\]
Observe that, by definition,
\begin{equation}\label{eq:vol_M_n_G_bound}
  \vol(\M_n) \ge \left(1 + \delta \right)^{\binom{n}{2}}\P(G) = \left(1 + \frac{1}{2\sqrt{n}} \right)^{\binom{n}{2}}\P(G).
\end{equation}
We shall derive a lower bound on $\P(G)$ from the Local Lemma.

\begin{lem}[{\cite[Lemma~5.1.1]{AlSp}}]
  \label{lem:local-lemma}
  Let $B_v$, $v \in V$, be events in an arbitrary probability space. Suppose that there is an integer $k$ such that, for each $v \in V$, there is a set $D_v \subseteq V \setminus \{v\}$ with at most $k$ elements such that $B_v$ is mutually independent of all the events $B_w$ with $w \in V \setminus (D_v \cup \{v\})$. If a real $p \in [0,1]$ satisfies $\P(B_v) \le p (1-p)^k$ for each $v \in V$, then
  \[
    \P\bigg(\bigcap_{v \in V} B_v^c\bigg) \ge (1-p)^{|V|}.
  \]
\end{lem}

For a triple of distinct indices $\{i,j,k\} \in \binom{\br{n}}{3}$, let $B_{ijk}$ denote the event that $(\dist{i}{j}, \dist{i}{k}, \dist{j}{k})$ is
not in $\M_3$, that is, one of the three triangle inequalities is violated. Observe that
\[
\begin{split}
  \P(B_{ijk}) & = 3 \P(d_{ij} + d_{jk} < d_{ik}) = \frac{3}{1+\delta} \int_0^{2\delta} \P(d_{ij} + d_{jk} < 2 - x) \, dx \\
  & = \frac{3}{(1+\delta)^3} \int_0^{2\delta} \frac{(2\delta - x)^2}{2} \, dx = \frac{4\delta^3}{(1+\delta)^3}.
\end{split}
\]
Since the event $B_{ijk}$ is mutually independent of all events $B_{i'j'k'}$ such that $|\{i,j,k\} \cap \{i',j',k'\}| \le 1$, we may invoke Lemma~\ref{lem:local-lemma} to conclude that, for every $p$ satisfying
\begin{equation}
  \label{eq:local-lemma-condition}
  \frac{4\delta^3}{(1+\delta)^3} \le p(1-p)^{3(n-3)},
\end{equation}
we have
\begin{equation}
  \label{eq:local-lemma-conclusion}
  \P(G) = \P\left(\bigcap_{\smash{\{i,j,k\} \in \binom{\br{n}}{3}}} B_{ijk}^c\right) \ge (1-p)^{\binom{n}{3}}.
\end{equation}
It is easy to see that if $p = an^{-3/2}$ for some constant $a > 1/2$, then~\eqref{eq:local-lemma-condition} is satisfied for all sufficiently large $n$.
In particular, \eqref{eq:vol_M_n_G_bound} and~\eqref{eq:local-lemma-conclusion} imply that, for each $a > 1/2$,
\begin{align*}
  \vol(\M_n) &\ge \left(1 + \frac{1}{2\sqrt{n}} \right)^{\binom{n}{2}} \left(1 - \frac{a}{n^{3/2}}\right)^{\binom{n}{3}} \\
  & = \exp\left(\left(\frac{1}{4} - \frac{a}{6} + o(1)\right) n^{3/2}\right).
\end{align*}
Since $a$ was an arbitrary constant greater than $1/2$, this yields the lower bound in~\eqref{eq:main_volume_estimates}.\qed

\subsection{Upper bound}
\label{sec:volume-upper-bound}

In this section, we deduce the upper bound on $\vol(\M_n)$ stated in
Theorem~\ref{thm:volume_estimate} from the entropy estimate of Theorem~\ref{thm:entropy_bound_for_almost_independent}.

Let $n \ge 3$ and let $d$ be a uniformly sampled metric space in $\M_n$, which we view as a vector in $\R^{\binom{\br{n}}{2}}$. By Lemma~\ref{lem:entropy-compact-support},
\[
  \log(\vol(\M_n)) = H(d).
\]
For each $m \in \{0, \dotsc, n-1\}$, let us denote by $F_m$ the set of all pairs $ij \in
\binom{\br{n}}{2}$ with $\max\{i,j\} > n-m$, that is,
\begin{equation}\label{eq:F_m_def}
  F_ m := \binom{\br{n}}{2} \setminus \binom{\br{n-m}}{2}
\end{equation}
and set, for $m \le n-2$,
\begin{equation}\label{eq:h_m def}
  h_m := H(d_{12} \mid (d_e)_{e \in F_m}),
\end{equation}
where $h_0=H(d_{12})$. Observe that, by symmetry, $h_m = H(\dist{i}{j} | (d_e)_{e \in F_m})$
for every $ij \in \binom{\br{n-m}}{2}$. Since $d_{12}\in[0,2]$, then $h_0 \le \log 2$,
by Lemma~\ref{lem:entropy-compact-support}.
Since $F_m \subseteq F_{m+1}$ for every $m$, it follows from Lemma~\ref{prop:basic_entropy_properties}~\ref{item:entropy-prop-4} that
\begin{equation}\label{eq:conditional_entropy_monotonicity}
  h_{n-2} \le \cdots \le h_1 \le h_0 \le \log 2.
\end{equation}
Additionally, as $F_0 = \emptyset$ and $F_{n-1}= \binom{\br{n}}{2}$, Lemma~\ref{prop:basic_entropy_properties}~\ref{item:entropy-prop-1} and~\ref{item:entropy-prop-3} give
\begin{align}
  H\left((d_e)_{e \in \binom{\br{n}}{2}}\right) & = \sum_{m = 0}^{n-2} H\left((d_e)_{e \in F_{m+1} \setminus F_m} \mid (d_f)_{f \in F_m}\right) \nonumber\\
                                                & \le \sum_{m = 0}^{n-2} \sum_{e \in F_{m+1} \setminus F_m} H\left(d_e \mid (d_f)_{f \in F_m}\right) \nonumber\\
                                                & = \sum_{m = 0}^{n-2} |F_{m+1} \setminus F_m| \cdot h_m
                                                  = \sum_{m=0}^{n-2} (n-m-1) \cdot h_m. \label{eq:volume_decomposition}
\end{align}
In particular, it suffices to prove the following estimate.

\begin{lem}
  \label{lem:h_m_bound}
  There exists a $K > 0$ such that, for all $m \in \{0, \dotsc, n-2\}$,
  \begin{equation*}
    h_m\le \frac{K}{\sqrt{m+1}}.
  \end{equation*}
\end{lem}

Indeed, substituting this bound into \eqref{eq:volume_decomposition} gives
\begin{equation*}
  \log(\vol(\M_n)) = H(d) \le K \sum_{m=1}^{n-1}\frac{n-m}{\sqrt{m}} \le C n^{3/2}
\end{equation*}
for some absolute constant $C>0$, establishing the theorem.

Lemma~\ref{lem:h_m_bound} is a fairly simple consequence of the monotonicity of the sequence $(h_m)$ and the following estimate, which lies at the heart of the matter.

\begin{lem}
  \label{lem:h_m_bound_with_difference}
  There exists a $C>0$ such that, for all $m \in \{0, \dotsc, n-3\}$,
  \begin{equation*}
    h_m\le C\left(h_m - h_{m+1}\right)^{1/3}.
  \end{equation*}
\end{lem}
\begin{proof}
  Fix an $m \in \{0, \dotsc, n-3\}$. Since $\{1,n-m\}, \{2,n-m\} \in F_{m+1} \setminus F_m$, Lemma~\ref{prop:basic_entropy_properties}~\ref{item:entropy-prop-4} implies that
  \[
    h_{m+1} \le  H(d_{12} \mid (d_e)_{e \in F_m}, d_{1,n-m}, d_{2,n-m}) \le h_m.
  \]
  By symmetry, we may replace $n-m$ in the above inequality with any element of $\{3, \dotsc, n-m\}$. In particular,
  \begin{equation*}
    H(d_{12} \mid (d_e)_{e \in F_m}, d_{13}, d_{23}) \ge h_{m+1}
  \end{equation*}
  and thus,
  \begin{equation}
    \label{eq:entropy-bound-hm}
    H(d_{12} \mid (d_e)_{e \in F_m}) - H(d_{12} \mid (d_e)_{e \in F_m}, d_{13},
    d_{23}) \le h_m - h_{m+1}.
  \end{equation}

  Condition on all the distances $d_{ij}$ with $ij \in F_m$ and denote by $(X_1, X_2, X_3)$ the random vector whose distribution is the conditioned distribution of $(d_{12}, d_{13}, d_{23})$, so that
  \[
    h_m = H(d_{12} \mid (d_e)_{e \in F_m}) = \E[H(X_1)] = \E[H(X_2)] = \E[H(X_3)].
  \]
  We write $X_1 \times X_2 \times X_3$ to denote the random variable whose distribution is the product of the marginal distributions of $X_1$, $X_2$, and $X_3$.

  \begin{claim}
    \label{claim:PXnotinM3}
    We have
    \[
      \E\big[\P(X_1 \times X_2 \times X_3 \notin \overline{\M_3})\big] \le 2(h_m - h_{m+1}).
    \]
  \end{claim}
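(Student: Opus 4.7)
The plan is to bound the expected Kullback--Leibler divergence between the joint conditional distribution of $(d_{12},d_{13},d_{23})$ and the product of its marginals by $2(h_m-h_{m+1})$, and then use Lemma~\ref{lem:Kullback_Leibler_and_support} to extract the probability that the product distribution lies outside $\overline{\M_3}$.

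First, since $(d_{12},d_{13},d_{23})$ is always a valid triple of pairwise distances bounded by~$2$, the joint distribution of $(X_1,X_2,X_3)$ is almost surely supported in $\overline{\M_3}$. Taking $A=\R^3\setminus\overline{\M_3}$ in Lemma~\ref{lem:Kullback_Leibler_and_support} therefore yields, pointwise in the conditioning,
\[
\P(X_1\times X_2\times X_3\notin\overline{\M_3}) \le \DKL{(X_1,X_2,X_3)}{X_1\times X_2\times X_3}.
\]
So it suffices to show that $\E\bigl[\DKL{(X_1,X_2,X_3)}{X_1\times X_2\times X_3}\bigr]\le 2(h_m-h_{m+1})$.

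Next, I would apply the triangle-type inequality Lemma~\ref{lem:DKL-triangle-ineq}, which bounds the full-product divergence by the sum of two partial-product divergences. Using the identity~\eqref{eq:entropy_KL_relation} and the chain rule for conditional entropy (Lemma~\ref{prop:basic_entropy_properties}\ref{item:entropy-prop-1}), a direct calculation gives, after taking expectations over the conditioning,
\begin{align*}
\E\bigl[\DKL{(X_1,X_2,X_3)}{X_1\times (X_2,X_3)}\bigr] &= h_m - H\bigl(d_{12}\mid (d_e)_{e\in F_m}, d_{13}, d_{23}\bigr), \\
\E\bigl[\DKL{(X_1,X_2,X_3)}{(X_1,X_2)\times X_3}\bigr] &= h_m - H\bigl(d_{23}\mid (d_e)_{e\in F_m}, d_{12}, d_{13}\bigr),
\end{align*}
where I use the symmetry under permutations of $\{1,2,3\}$ (which preserves the conditioning set $F_m$ since $m\le n-3$) to identify $\E[H(X_1)]=\E[H(X_3)]=h_m$.

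Finally, the symmetry argument already used in the derivation of~\eqref{eq:entropy-bound-hm} shows that, for any three distinct indices $i,j,k\in\br{n-m}$,
\[
H\bigl(d_{ij}\mid (d_e)_{e\in F_m}, d_{ik}, d_{jk}\bigr) \ge h_{m+1},
\]
since conditioning on $F_m\cup\{\{i,k\},\{j,k\}\}$ can only dominate conditioning on the larger set $F_{m+1}$ up to a permutation, and adding conditioning decreases entropy (Lemma~\ref{prop:basic_entropy_properties}\ref{item:entropy-prop-4}). Applying this with $(i,j,k)=(1,2,3)$ and with $(i,j,k)=(2,3,1)$ bounds each of the two partial-product divergences above by $h_m-h_{m+1}$. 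Combining with Lemma~\ref{lem:DKL-triangle-ineq} yields
\[
\E\bigl[\DKL{(X_1,X_2,X_3)}{X_1\times X_2\times X_3}\bigr] \le 2(h_m-h_{m+1}),
\]
and together with the first displayed inequality this proves the claim.

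The main subtlety is the symmetry step: one must verify that permutations of indices within $\br{n-m}$ preserve the joint distribution of $(d_e)_{e\in F_m}$, so that the marginal entropies of $X_1,X_2,X_3$ agree and the bound from~\eqref{eq:entropy-bound-hm} transfers to the symmetric variant needed for the second KL term. Once this symmetry is in hand, the rest is a bookkeeping of chain-rule identities.
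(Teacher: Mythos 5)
Your proposal is correct and follows essentially the same route as the paper: bound $\P(X_1\times X_2\times X_3\notin\overline{\M_3})$ by $\DKL{(X_1,X_2,X_3)}{X_1\times X_2\times X_3}$ via Lemma~\ref{lem:Kullback_Leibler_and_support} (using that the joint law is supported in $\overline{\M_3}$), then control that divergence by $2(h_m-h_{m+1})$ via Lemma~\ref{lem:DKL-triangle-ineq} together with the symmetrised form of~\eqref{eq:entropy-bound-hm}. The only differences from the paper's argument are the order of the steps and your more explicit unpacking of the symmetry and chain-rule bookkeeping.
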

  \begin{proof}[Proof of Claim~\ref{claim:PXnotinM3}]
  By~\eqref{eq:entropy_KL_relation}, inequality~\eqref{eq:entropy-bound-hm} is equivalent to
  \begin{equation}
    \label{eq:DKL-bound-hm}
    \E\big[\DKL{(X_1, X_2, X_3)}{X_1 \times (X_2, X_3)}\big] \le h_m - h_{m+1}.
  \end{equation}
  By symmetry, inequality~\eqref{eq:entropy-bound-hm} continues to hold for any permutation of $(d_{12}, d_{13}, d_{23})$ and hence~\eqref{eq:DKL-bound-hm} continues to hold for any permutation of $(X_1, X_2, X_3)$. It thus follows from Lemma~\ref{lem:DKL-triangle-ineq} that
  \begin{equation}
    \label{eq:conditioned DKL}
    \E\big[\DKL{(X_1,X_2,X_3)}{X_1 \times X_2 \times X_3}\big] \le 2(h_m - h_{m+1}).
  \end{equation}
  Since $(X_1, X_2, X_3) \in \overline{\M_3}$ with probability one, Claim~\ref{claim:PXnotinM3} now follows from~\eqref{eq:conditioned DKL} and Lemma~\ref{lem:Kullback_Leibler_and_support}.
  \end{proof}

  We may now apply Theorem~\ref{thm:entropy_bound_for_almost_independent} (to the distribution of $X_1 \times X_2 \times X_3$) to bound the entropies of $X_1$, $X_2$, and $X_3$. Lemma~\ref{lem:independent_max_volume} states that the largest volume of an axis-parallel box contained in $\overline{\M_3}$ is one and thus, by Theorem~\ref{thm:entropy_bound_for_almost_independent},
  \[
    H(X_1) + H(X_2) + H(X_3) \le C \cdot \P(X_1 \times X_2 \times X_3 \notin \overline{\M_3})^{1/3}.
  \]
  By Jensen's inequality (applied to the concave function $x \mapsto x^{1/3}$) and Claim~\ref{claim:PXnotinM3},
  \[
    3h_m = \E\big[H(X_1) + H(X_2) + H(X_3) \big] \le C \cdot \big( 2(h_m-h_{m+1}) \big)^{1/3},
  \]
    as we wanted to prove.
\end{proof}

\begin{proof}[Proof of Lemma~\ref{lem:h_m_bound}]
  Let $K$ be a sufficiently large absolute constant, to be fixed later. If $h_m \le \frac{K}{\sqrt{m+1}}$ for all $m$ then there is nothing to prove. Otherwise, aiming to obtain a contradiction, define
  \begin{equation*}
    m_0:=\min\left\{m : 0\le m\le n-2\text{ and }h_m > \frac{K}{\sqrt{m+1}}\right\}.
  \end{equation*}
  Taking $K\ge \log 2$ we necessarily have $m_0\ge 1$ by \eqref{eq:conditional_entropy_monotonicity}. It follows from Lemma~\ref{lem:h_m_bound_with_difference} and the definition of $m_0$ that
  \begin{equation*}
    h_{m_0-1}\le C\left(h_{m_0-1} - h_{m_0}\right)^{1/3} \le C\left(\frac{K}{\sqrt{m_0}} - \frac{K}{\sqrt{m_0+1}}\right)^{1/3}.
  \end{equation*}
  As $\frac{1}{\sqrt{x}} - \frac{1}{\sqrt{x+1}}\le \frac{1}{2x^{3/2}}$ for all $x>0$, we may continue the above inequality to obtain
  \begin{equation*}
    h_{m_0-1}\le \frac{CK^{1/3}}{2^{1/3}\sqrt{m_0}}\le \frac{K}{\sqrt{m_0+1}}
  \end{equation*}
  if only $K$ is sufficiently large compared with $C$. Fix $K$ to satisfy this condition. This contradicts the definition of $m_0$ as $h_{m_0}\le h_{m_0-1}$ by~\eqref{eq:conditional_entropy_monotonicity}. This finishes the proof of Lemma \ref{lem:h_m_bound}, and thus also of Theorem \ref{thm:volume_estimate}.
\end{proof}

\subsection{The lower tail of a typical distance}
\label{sec:lower-tail-typical}

The proof of Theorem~\ref{thm:minimal_distance}, presented in Section~\ref{sec:distance} below, uses as input an upper bound on $\P(d_{12}<1)$ where, as before, $(d_{ij})$ denotes a uniformly chosen metric space in $\M_n$. We record this in the following result, which further points out a nearly-matching lower bound.

\begin{prop}\label{prop:distance-lower-tail}
There are absolute constants $C, c>0$ such that
\[
  \frac{c}{\sqrt{n} \log(n+1)}\le \P( d_{12} < 1) \le \frac{C}{\sqrt{n}}.
\]
\end{prop}
We continue to use the notation $H(X;A):=-\int_A f(x)\log(f(x))dx$ for a random variable $X$ with bounded and compactly-supported density $f$ and measurable $A$.

The lower bound in Proposition~\ref{prop:distance-lower-tail} is a simple consequence of the volume lower bound in Theorem~\ref{thm:volume_estimate}. To see this, assume, to reach a contradiction, that the lower bound does not hold. Then, by Lemma~\ref{lem:entropy_bound_for_sub_probability} and the fact that $\log y\le y-1$ for all $y>0$,
\begin{equation*}
\begin{split}
  H(d_{12}) &= H(d_{12};[0,1)) + H(d_{12};[1,2])\\
  &\le \P(d_{12}<1)\log\left(\frac{1}{\P(d_{12}<1)}\right) + \P(d_{12}\ge1)\log\left(\frac{1}{\P(d_{12}\ge1)}\right)\\
  &\le \P(d_{12}<1)\log\left(\frac{1}{\P(d_{12}<1)}\right) + 1-\P(d_{12}\ge1)\\
  &= \P(d_{12}<1)\log\left(\frac{e}{\P(d_{12}<1)}\right)\le \frac{2c}{\sqrt{n}}
\end{split}
\end{equation*}
for each $c>0$ and $n$ sufficiently large. Thus, by the subadditivity of entropy,
\begin{equation*}
  \log(\vol(\M_n)) \le \binom{n}{2}H(d_{12})\le c n^{3/2}
\end{equation*}
for all large $n$. For small $c$, this leads to a contradiction with the lower bound in Theorem~\ref{thm:volume_estimate}.

We proceed to prove the upper bound in Proposition~\ref{prop:distance-lower-tail}. The following lemma, which relies on the entropy bounds of Section~\ref{sec:entropy-maximising product distributions}, is the main ingredient.
\begin{lem}\label{lem:sum of entropies with lower tail}
  There exist absolute constants $C,c>0$ such that the following holds. Let $X_1, X_2, X_3$ be \emph{independent} random variables supported in $[0,2]^3$. Then
  \begin{equation*}
    \sum_{i=1}^3 H(X_i)\le C\,\P((X_1, X_2, X_3)\notin\overline{\M_3})^{1/3} - c\sum_{i=1}^3\P(X_i < 1).
  \end{equation*}
\end{lem}
\begin{proof}
  Set $\eps:=\P((X_1, X_2, X_3)\notin\overline{\M_3})^{1/3}$ and use~\eqref{eq:LQ UQ def} to define $(a_i), (b_i)$ for $1\le i\le 3$. Proposition~\ref{prop:entropy_bound_for_almost_independent} and \eqref{eq:vol_P_lemma_estimate}
  imply that
  \begin{equation}\label{eq:initial entropy bound for minimum distance}
    \sum_{i=1}^3 H(X_i)\le \frac{1}{2}\sum_{i=1}^3 H(X_i; [a_i, b_i]) + C\eps
  \end{equation}
  for an absolute $C>0$.

  We proceed to estimate the right-hand side of~\eqref{eq:initial entropy bound for minimum distance}. First, Lemma~\ref{lem:independent-box-poly} shows that $P:=[a_1, b_1]\times [a_2, b_2]\times [a_3,b_3]$ is contained in the closure of $\M_3$. Hence, Lemma~\ref{lem:independent_max_volume} implies that
  \begin{equation}\label{eq:V upper bound}
    V:=\vol(P)\le 1 - c_1\sum_{i=1}^3 \big( |a_i - 1| + |b_i - 2| \big)
  \end{equation}
  for an absolute $0<c_1<1$. Second, we shall prove that
  \begin{multline}\label{eq:middle entropy bound}
    H(X_i;[a_i,b_i]) \le (1-2\eps)\log(b_i-a_i) + 2\eps \\
     + c_1\left(|a_i-1| + |b_i - 2| - \frac{\P(X_i<1)}{20}\right) + \eps
  \end{multline}
  for each $1\le i\le 3$. Lastly, plugging this estimate in~\eqref{eq:initial entropy bound for minimum distance} gives
  \begin{multline*}
    \sum_{i=1}^3 H(X_i)\le \frac{1-2\eps}{2}\log(V) + (C+\tfrac{9}{2})\eps\\ + \frac{c_1}{2}\sum_{i=1}^3 \left(|a_i-1| + |b_i-2| - \frac{\P(X_i<1)}{20}\right).
  \end{multline*}
  Using $\log(V)\le V-1$ and \eqref{eq:V upper bound} gives
  \[
  \frac{1-2\eps}{2}\log(V)\le\frac{1}{2}\bigg(-c_1\sum_{i=1}^3(|a_i-1|+|b_i-2|)\bigg)+9c_1\eps,
  \]
  where we used the inequality $|a_i-1| + |b_i-2| \le 3$ to bound the error term. We see that the terms containing $\sum_i |a_i-1|+|b_i-2|$ cancel and we are left with
  \[
  \sum_{i=1}^3H(X_i)\le C\eps-\frac{c_1}{40}\sum_{i=1}^3\P(X_i<1),
  \]
  as needed.

  It remains to prove~\eqref{eq:middle entropy bound}. Fix $1\le i\le 3$. Lemma~\ref{lem:entropy_bound_for_sub_probability} and the inequality $\log y \le y -1$, valid for all $y > 0$, give that
  \[
      H(X_i;[a_i, b_i]) \le(1-2\eps)\log\left(\frac{b_i-a_i}{1-2\eps}\right)\le (1-2\eps)\log(b_i-a_i) + 2\eps.
  \]
  Thus it suffices to show that the sum of the third and fourth terms in~\eqref{eq:middle entropy bound} is non-negative. This is the case if: (i) $a_i\ge1$, since $c_1<1$ and the definition of $a_i$ implies that $\P(X_i<a_i)=\eps$ (see~\eqref{eq:X-quantiles}); (ii) $b_i\le1$; (iii) $b_i - a_i\le \frac{1}{2}$; or (iv) $a_i<1$ and $\P(a_i<X_i<1)\le 20(1-a_i)$ since
  \begin{equation}\label{eq:less than 1 and a_i}
    \P(X_i<1) = \P(a_i<X_i<1) + \P(X_i<a_i) = \P(a_i<X_i<1) + \eps.
  \end{equation}
  We thus assume that $a_i<1$, $b_i>1$, $b_i-a_i>\frac{1}{2}$ and $\P(a_i<X_i<1)>20(1-a_i)$. In particular,
  \begin{equation*}
    \frac{\P(a_i<X_i<1)}{\P(a_i<X_i<b_i)}\ge 10 \cdot \frac{1-a_i}{b_i-a_i}
  \end{equation*}
  Applying the second clause of Lemma~\ref{lem:entropy_bound_for_sub_probability}, with the partition $[a_i,b_i]=[a_i,1)\cup[1,b_i]$, then shows that
  \begin{equation*}
    H(X_i;[a_i, b_i])\le(1-2\eps)\log\left(\frac{b_i-a_i}{1-2\eps}\right) -  \frac{\P(a_i<X_i<1)}{4}.
  \end{equation*}
  This implies~\eqref{eq:middle entropy bound}, again using~\eqref{eq:less than 1 and a_i} and the fact that $c_1<1$.
\end{proof}
Now recall the notation $F_m$ and $h_m$ from~\eqref{eq:F_m_def} and~\eqref{eq:h_m def}, respectively. The following simple lemma is our second ingredient in the proof of the upper bound in Proposition~\ref{prop:distance-lower-tail}.
\begin{lem}\label{eq:m for conditioning}
  There exists an absolute constant $K>0$ and some $\frac{1}{3}n \le m \le \frac{2}{3}n$ for which
  \begin{equation*}
    h_m>-\frac{K}{\sqrt{n}}\quad\text{and}\quad h_m - h_{m+1}\le \frac{K}{n^{3/2}}.
  \end{equation*}
\end{lem}
\begin{proof}
  Recall that $m \mapsto h_m$ is decreasing (\ref{eq:conditional_entropy_monotonicity})  and $h_m\le C/\sqrt{m+1}$ for each $m$, by Lemma \ref{lem:h_m_bound}. We first claim that, for some $K$ sufficiently large,
  \begin{equation}\label{eq:hm from below}
    h_{\lceil 2n/3 \rceil}>-\frac{K}{\sqrt{n}}.
  \end{equation}
  Indeed, if it were not the case then from (\ref{eq:volume_decomposition}) we would get
  \begin{align*}
    \log(\vol(\M_n))
    & \stackrel{\textrm{\clap{(\ref{eq:volume_decomposition})}}}{\le}\;
      \sum_{m=0}^{n-2}(n-m-1)h_m\\
    &\le \sum_{m=0}^{\lceil 2n/3 \rceil-1}\frac {C(n-m-1)}{\sqrt{m+1}} -
      \sum_{m=\lceil 2n/3 \rceil}^{n-2}\frac{K(n-m-1)}{\sqrt{n}}\\
    & \le \;Cn^{3/2}-Kn^{3/2},
  \end{align*}
  which would contradict the fact that $\vol(\M_n) \ge 1$, provided that $K$ is sufficiently large.

  Finally, it follows from the pigeonhole principle that for some $m$ with $\lceil n/3 \rceil \le m< \lceil 2n/3 \rceil$, we have
  \begin{equation}
    \label{eq:hm32}
    h_m-h_{m+1}\le \frac{h_{\lceil n/3 \rceil} - h_{\lceil 2n/3 \rceil}}{\lfloor n/3 \rfloor} \le \frac{C/\sqrt{n/3+1} + K/\sqrt{n}}{\lfloor n/3 \rfloor} \le \frac{K}{n^{3/2}}.
  \end{equation}
  Moreover, $h_m \ge h_{\lceil 2n/3 \rceil} > -K/\sqrt{n}$, as claimed.
\end{proof}
We now finish the proof of the upper bound in Proposition~\ref{prop:distance-lower-tail}.

Let $\frac{1}{3}n \le m \le \frac{2}{3}n$ be as in Lemma~\ref{eq:m for conditioning}. Condition on all the distances $d_{ij}$ with $ij \in F_m$ and write $(X_1,X_2,X_3)$ for the conditional versions of $(d_{12},d_{13},d_{23})$. The distribution of $(X_1, X_2, X_3)$ is regarded as random (a function of the variables conditioned upon). Lemma~\ref{lem:sum of entropies with lower tail} shows that
\begin{equation*}
  \sum_{i=1}^3 H(X_i)\le C\,\P(X_1 \times X_2 \times X_3 \notin \M_3)^{1/3} - c\sum_{i=1}^3\P(X_i < 1).
\end{equation*}
Averaging over the conditioning, using Jensen's inequality (for the concave function $x\mapsto x^{1/3}$), and applying Claim~\ref{claim:PXnotinM3}, we conclude that
\begin{equation*}
\begin{split}
  3h_m\le C\,(2h_m - 2h_{m+1})^{1/3} - 3c\P(d_{12}<1)\\
\end{split}
\end{equation*}
Thus, by Lemma~\ref{eq:m for conditioning},
\begin{equation*}
  \P(d_{12}<1)\le \frac{C'}{\sqrt{n}}
\end{equation*}
for some absolute constant $C'$, finishing the proof.

\section{The shortest distance in the metric space}

\label{sec:distance}

In this section, we prove Theorem~\ref{thm:minimal_distance}, showing that, with high probability, the minimum distance in a uniformly chosen metric space from $\M_n$ is only polynomially shorter than one. In order to introduce several key ideas used in the proof of the theorem, we first sketch an argument yielding the weaker result that all distances are larger than $2^{-8}$. This result will not need any fine estimates on the volume and it will yield an exponential bound on the probability of having a short distance. The first step is the following simple proposition.

\begin{prop}
  \label{prop:exp-bound-alpha}
  For every $\alpha \in (0,1/2]$,
  \[
    \vol\big(\{d \in \M_n : \min_{i,j}d_{ij}\le \alpha\}\big) \le \binom{n}{2} (2\alpha)^{n-2} \cdot \vol(\M_{n-1}).
  \]
\end{prop}
\begin{proof}
  By symmetry, it suffices to show that the volume of those $d \in \M_n$ for which $d_{n-1,n} \le \alpha$ is at most $(2\alpha)^{n-2} \cdot \vol(\M_{n-1})$. Assume that $d_{n-1,n} \le \alpha$ and note that, for each $i \in \br{n-2}$, the distance $d_{in}$ must belong to the interval $[d_{i,n-1}-\alpha, d_{i,n-1}+\alpha]$. In other words, the volume of the possible values for $(d_{in})_{i=1}^{n-2}$, given all the other distances, is at most $(2\alpha)^{n-2}$. This gives the desired estimate.
\end{proof}

Suppose that $d$ is sampled uniformly from $\M_n$. We could already conclude that $\P(\min_{ij} d_{ij} \le \alpha)$ is exponentially small in $n$, for every constant $\alpha < 1/2$, if we knew that $\vol(\M_{n-1}) \le e^{o(n)} \cdot \vol(\M_n)$. Such an estimate does indeed hold, as will be shown in Proposition~\ref{prop:volume_comparison}. Since the proof of Proposition~\ref{prop:volume_comparison} is rather involved (even though it is quite natural to conjecture that $n \mapsto \vol(\M_n)$ is increasing, see Section~\ref{sec:further_questions}) and it crucially relies on the volume estimate provided by Theorem~\ref{thm:volume_estimate}, let us sketch here a self-contained argument showing that
\begin{equation}
  \label{eq:vol-Mn-comparison}
  \vol(\M_n) \ge 2^{-6n} \cdot \vol(\M_{n-1}),
\end{equation}
which is enough to deduce that, for some constants $c, C > 0$,
\begin{equation}
  \label{eq:exponential_probability_for_very_small_distances}
  \P(\min_{i,j}d_{ij}\le 2^{-8})\le Ce^{-cn}.
\end{equation}

\begin{proof}[Sketch of a proof of~\eqref{eq:vol-Mn-comparison}]
  Define
  \[
    F(d) := \min_{A \subseteq \br{n}} \prod_{i \in A} \left(2\min_{j\in\br{n}\setminus\{i\}}d_{ij}\right)
  \]
  (so that $F(d) = 1$ if $d_{ij} \ge 1/2$ for all $\{i, j\}$).  We claim that, for all sufficiently large $n$ and all $F \in (0,1)$,
  \begin{equation}
    \label{eq:F-lemma-basic}
    \vol\big(\{d\in\M_n: F(d) \le F\}\big) \le F^{n/10} \cdot 2^{\binom{n}{2}}.
  \end{equation}
  Since a stronger estimate will be proved in Lemma~\ref{lem:F-lemma}, we only sketch the main idea here. The proof of~\eqref{eq:F-lemma-basic} is similar in spirit to the calculation done in the proof of Proposition~\ref{prop:exp-bound-alpha}. It relies on the key observation that, if $d_{ij}$ is small, the $n-2$ pairs of distances $(d_{ik}, d_{jk})$ are constrained to a strip in $[0,2]^{2}$ of width $2d_{ij}$. In particular, if $F(d)$ is small, then this significantly constrains all distances. For details, we refer the reader to the proof of Lemma~\ref{lem:F-lemma}.

  Examine the set
  \[
    \M_n^1:=\{d\in\M_n:F(d)>2^{-5n}\}.
  \]
  It follows from~\eqref{eq:F-lemma-basic} that
  \[
    \vol(\M_n\setminus \M_n^1)\le \frac{1}{2}\le \frac{1}{2}\vol(\M_n),
  \]
  so that $\vol(\M_n^1)\ge \frac 12 \vol(\M_n)$. We claim that the volume of possible extensions of any fixed $d\in\M_n^1$ to a metric space in $\M_{n+1}$ is reasonably large. Indeed, denote $I(\rho):=[3/2-\rho/2,\,3/2+\rho/2]$ and extend $d$ to $[0,2]^{\binom{\br{n+1}}{2}}$ by requiring that, for all $i \in \br{n}$,
  \[
    d_{i,n+1}\in I\left(\min\left\{\min_{j\in \br{n}\setminus\{i\}}d_{ij},1\right\}\right)
  \]
  It is straightforward to check that one obtains a metric space, and further, that the volume of the extension is at least $F(d)/2^n$. (A version of this argument is presented in the proof of Proposition~\ref{prop:volume_comparison}.) Hence,
  \[
    \vol(\M_{n+1})\ge 2^{-6n} \cdot \vol(\M_n^1)\ge 2^{-6n-1} \cdot \vol(\M_n).\qedhere
  \]
\end{proof}

Let us point out here that, regardless of other losses in the argument above, using Proposition~\ref{prop:exp-bound-alpha} or examining the quantity $F$ gives absolutely no information about distances between $\frac{1}{2}$ and $1$; for these, more involved analysis is required.


\subsection{{Proof of Theorem~\ref{thm:minimal_distance}}}
\label{sec:minimum-distance}

Giving up optimising various estimates in favour of simplifying the presentation (and because we believe that further ideas would be needed to obtain the optimal value of $c$), we shall prove the theorem with
\[
  c = 1/30.
\]
The starting point of our proof is Proposition~\ref{prop:distance-lower-tail}, which states that there exists a constant $C$ such that, when $d$ is a uniformly sampled metric space from $\M_n$,
\begin{equation}\label{eq:unlikely_short_edge}
  \P(d_{ij}<1) \le Cn^{-1/2} \quad \text{for every $\{i,j\}\in\binom{\br{n}}{2}$}.
\end{equation}
This allows us to conclude that a typical metric space sampled from $\M_n$ has relatively few distances shorter than one. More precisely, letting
\[
  \G_n := \big\{d\in \M_n: \text{$d_{ij} < 1$ for at most $n^{5/3}$ pairs $\{i,j\}$}\big\},
\]
we have
\begin{equation}
  \label{eq:used_to_be_lem}
  \vol(\G_n) > \left(1 - Cn^{-1/6}\right)\vol(\M_n).
\end{equation}
To see \eqref{eq:used_to_be_lem}, let $d$ be a uniformly sampled metric space in $\M_n$ and let $X$ be the number of pairs $\{i,j\}$ such that $d_{ij}<1$. By Markov's inequality and~\eqref{eq:unlikely_short_edge}, we have
\[
  \P(X > n^{5/3}) < \frac{\E[X]}{n^{5/3}} \le Cn^{-1/6},
\]
as needed. In particular, we may restrict our attention to spaces in $\G_n$. Define
\[
  \B_n :=\{d\in\G_n : \min_{i,j} d_{ij} < 1-n^{-c}\}.
\]
Our argument will comprise two independent parts. First, we will show that the volume of $\B_n$ is extremely small when compared to the volume of $\M_{n-2}$.

\begin{prop}
  \label{prop:volume-Bn}
  For all sufficiently large $n$, we have
  \[
    \vol(\B_n) \le \exp\left(-\frac{n^{1-2c}}{5}\right) \cdot \vol(\M_{n-2}).
  \]
\end{prop}

This bound would yield the desired result if we knew that $\vol(\M_{n-2})$ is not much larger than $\vol(\M_n)$. It seems plausible that, in fact,
\begin{equation}
  \label{eq:growing_volume}
  \vol(\M_n)\ge\vol(\M_{n-2})
\end{equation}
holds for all $n$. However, we have been unable to establish this, see Section~\ref{sec:discussion_and_open_questions}. We should point out that the volume estimates of Theorem~\ref{thm:volume_estimate} imply that \eqref{eq:growing_volume} holds for an infinite sequence of $n$ and thus Proposition~\ref{prop:volume-Bn} is sufficient to yield the assertion of Theorem~\ref{thm:minimal_distance} for that sequence. In order to establish the theorem for all sufficiently large $n$, we shall prove the following weaker bound, which still suffices for our purposes.

\begin{prop}\label{prop:volume_comparison}
  For all sufficiently large $n$, we have
  \[
    \vol(\M_{n+1}) \ge \exp\left(-n^{1-3c} \log(n) \right) \cdot \vol(\M_n).
  \]
\end{prop}

We postpone the proofs of Propositions~\ref{prop:volume-Bn} and~\ref{prop:volume_comparison} to the next two sections and finish the current section with a short derivation of Theorem~\ref{thm:minimal_distance}.

\begin{proof}[Proof of Theorem~\ref{thm:minimal_distance}]
  Recalling the definitions of $\G_n$ and $\B_n$, we have
  \[
    \P(\min_{i,j} d_{ij}<1-n^{-c}) \le \frac{\vol(\M_n\setminus \G_n)}{\vol(\M_n)}+\frac{\vol(\B_n)}{\vol(\M_n)}.
  \]
  Estimate \eqref{eq:used_to_be_lem} states that the first term in the right-hand side is at most $C n^{-1/6}$ whereas Propositions~\ref{prop:volume-Bn} and~\ref{prop:volume_comparison} give
  \[
    \begin{split}
      \frac{\vol(\B_n)}{\vol(\M_n)} & \le \exp\left(-\frac{n^{1 - 2c}}{5}\right) \cdot \frac{\vol(\M_{n-2})}{\vol(\M_n)} \\
      & \le \exp\left(-\frac{n^{1-2c}}{5} + 2n^{1-3c}\log(n)\right) \le \exp\left(-\frac{n^{1-2c}}{6}\right),
    \end{split}
  \]
  provided that $n$ is sufficiently large.
\end{proof}

\subsection{Bounding the volume of spaces with a short distance}
\label{sec:bound-volume-spac}

In this section, we prove Proposition~\ref{prop:volume-Bn}. We shall split the set $\B_n$ into two parts, depending on whether or not there is a point $i \in \br{n}$ at distance significantly shorter than one from many other points, and use different arguments to estimate the volume of each of these parts. More precisely, for a metric space $d\in\M_n$ and an $i \in \br{n}$, we define the set of close neighbours of $i$ by
\begin{equation*}
  S_i(d) :=\left\{j\in \br{n}\setminus\{i\} : \dist{i}{j} < 1 - \frac{n^{-2c}}{4}\right\}
\end{equation*}
and let $m := \lfloor n^{1-3c} \rfloor$.

Our first lemma uses Theorem~\ref{thm:volume_estimate} to provide a very strong upper bound on the volume of all spaces $d \in \G_n$ (and not only $d \in \B_n$) for which $|S_i(d)| > m$ for some $i \in \br{n}$.
\begin{lem}
  \label{lem:Si-large}
  For all sufficiently large $n$, we have
  \[
    \vol\big(\{d\in\G_n : \max_i |S_i(d)| > m\}\big) \le \exp\left(-\frac{n^{2-8c}}{16}\right).
  \]
\end{lem}

Our second lemma bounds the volume of those $d \in \B_n$ for which $|S_i(d)| \le m$ for all $i \in \br{n}$ in terms of the volume of $\M_{n-2}$.

\begin{lem}
  \label{lem:Si-small}
  For all sufficiently large $n$, we have
  \[
    \vol\big(\{d\in\B_n : \max_i |S_i(d)| \le m\}\big) \le \exp\left(-\frac{n^{1 -2c}}{4}\right) \cdot \vol(\M_{n-2}).
  \]
\end{lem}

\begin{proof}[Proof of Proposition~\ref{prop:volume-Bn}]
  Using the estimates of the two lemmas, we may conclude that, for all sufficiently large $n$,
  \[
    \begin{split}
      \vol(\B_n) &\le \exp\left(-\frac{n^{1 -  2c}}{4}\right) \cdot \vol(\M_{n-2}) + \exp\left(-\frac{n^{2-8c}}{16}\right) \\
      & \le \exp\left(-\frac{n^{1-2c}}{5}\right) \cdot \vol(\M_{n-2}),
    \end{split}
  \]
  as $c < 1/6$ and $\vol(\M_{n-2}) \ge 1$.
\end{proof}

\begin{proof}[Proof of Lemma~\ref{lem:Si-large}]
  For $i \in \br{n}$, $S \subseteq \br{n}$ with $|S| = m$, and $T\subseteq\binom{\br{n}}{2}$ with $|T| = \lfloor  n^{5/3}\rfloor$, we let
  \begin{equation*}
    \G_n^{i,S,T}:=\big\{d\in\G_n : S_i(d) \supseteq S\text{ and } d_{jk} \ge 1\text{ if }\{j,k\}\notin T\big\}.
  \end{equation*}
  Note that if $d\in\G_n^{i,S,T}$ and $\{j,k\}\in\binom{S}{2}$, then necessarily $d_{jk} \le 2(1 - n^{-2c}/4)$, as follows from the triangle inequality $d_{jk} \le d_{ij} + d_{ik}$. Thus, $\G_n^{i,S,T}$ is contained in the product set
  \begin{multline*}
    \left\{\left(d_{jk} \right)_{\{j,k\}\in\binom{S}{2}}\in\left(1 -
    \frac{n^{-2c}}{4}\right)\cdot\M_{|S|}\right\}\\
    \times\prod_{\{j,k\}\in
  T\setminus\binom{S}{2}}\{d_{jk}\le 2\}\prod_{\{j,k\}\in
  \binom{\br{n}}{2}\setminus\left(T\cup\binom{S}{2}\right)}\{1\le d_{jk} \le
  2\}.
  \end{multline*}
  It follows that
  \begin{equation*}
    \vol(\G_n^{i,S,T})\le \left(1 - \frac{n^{-2c}}{4}\right)^{\binom{|S|}{2}}\vol(\M_{|S|})\cdot 2^{|T|}\cdot 1.
  \end{equation*}
  Estimating $\vol(\M_{|S|})$ using Theorem~\ref{thm:volume_estimate} gives
  \begin{equation*}
    \vol(\G_n^{i,S,T})\le \exp\left(-\frac{n^{-2c}}{4}\binom{m}{2}+
    C_1 m^{3/2} + n^{5/3}\right)\le
    \exp\left(-\frac{n^{2-8c}}{10}\right),
  \end{equation*}
  where we have used that $m = \lfloor n^{1-3c} \rfloor$, that $c$ is sufficiently small (so that $2-8c > 5/3$) and that $n$ is sufficiently large. Summing over all possible choices for $i$, $S$, and $T$ yields
  \begin{multline*}
    \vol\big(\{d\in\G_n : \max_i |S_i(d)| > m\}\big)\le n\binom{n}{m}\binom{n^2}{\lfloor
  n^{5/3}\rfloor}\exp\left(-\frac{n^{2-8c}}{10}\right) \\
    \le \exp\left((m+1) \log(n) + n^{5/3} \log(n^2)-\frac{n^{2-8c}}{10}\right)\le
    \exp\left(-\frac{n^{2-8c}}{16}\right),
  \end{multline*}
  where we again used the assumption that $2-8c > 5/3$.
\end{proof}

\begin{proof}[Proof of Lemma~\ref{lem:Si-small}]
  For $S \subseteq \br{n}$ with $|S| = 2m$, we let
  \begin{equation*}
    \B_n^S := \big\{d \in \G_n : S_1(d)\cup S_2(d)\subseteq S \text{ and } d_{12} < 1-n^{-c}\big\}
  \end{equation*}
  and note that, by symmetry,
  \begin{equation}
    \label{eq:Bn-sum-BnS}
    \vol\big(\{d \in \B_n : \max_i |S_i(d)| \le m\}\big) \le \binom{n}{2} \sum_{S \subseteq \br{n}, |S|=2m} \vol(\B_n^S).
  \end{equation}
  The crucial observation is that if $d_{12}<1-n^{-c}$, then, for any $j\not\in S$, we have $(d_{1j},d_{2j})\in W$, where
  \begin{figure}
    \begin{centering}
      \input{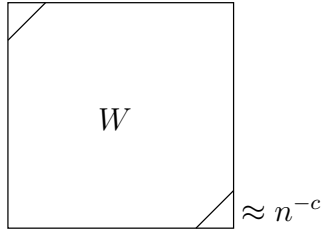}
    \end{centering}
    \caption{$W$ inside $[1-n^{-2c}/4,2]^2$\label{fig:W}.}
  \end{figure}
  \[
    W := \left\{(a,b) : 1 - \frac{n^{-2c}}{4}\le a,b\le 2,\, |a-b|\le 1-n^{-c}\right\}.
  \]
  Since
  \begin{equation*}
    \vol(W) = \left(1 + \frac{n^{-2c}}{4}\right)^2 -
    \left(\frac{n^{-2c}}{4} + n^{-c}\right)^2\le 1 -
    \frac{n^{-2c}}{2},
  \end{equation*}
  bounding the volume of $(d_{ij}:i\in\{1,2\},j\in S)$ crudely by $2^{2|S|}$, we get
  \begin{align*}
    \vol(\B_n^S) & \le 2^{2|S|} \cdot \vol(W)^{n-2-|S|} \cdot \vol(\M_{n-2}) \\
    & \le 16^m \cdot \exp\left(-\frac{1}{3}n^{1 - 2c}\right) \cdot \vol(\M_{n-2}),
  \end{align*}
  provided that $n$ is sufficiently large. Substituting this bound into~\eqref{eq:Bn-sum-BnS} gives the result, since
  \[
    \binom{n}{2} \binom{n}{2m} \le \exp\big((2m+2)\log(n)\big) \le 16^{-m} \cdot \exp\left(\frac{n^{1-2c}}{12}\right).\qedhere
  \]
\end{proof}

\subsection{Comparing volumes of metric polytopes}

This section is devoted to the proof of Proposition~\ref{prop:volume_comparison}. We show that a large portion of the spaces in $\M_n$ admit a
significant volume of extensions to spaces in $\M_{n+1}$. To this end, we study certain typical properties of metric spaces in $\G_n$.
The first step is establishing that, in a typical space in $\G_n$, there are not too many vertices that are incident to a distance that is significantly
shorter than $\frac{1}{2}$. Define, for a set $A\subseteq \br{n}$ and a space $d\in \M_n$,
\begin{equation}
  \label{eq:F_A_def}
  F_A(d):=\prod_{i\in A} \left(2\min_{j\in\br{n}\setminus\{i\}}\dist{i}{j}\right).
\end{equation}
(In particular, $F_\emptyset(d) = 1$.)

\begin{lem}
  \label{lem:F-lemma}
  If $n$ is sufficiently large, then for any $F \in (0,1)$, we have
  \begin{equation*}
    \vol\big(\{d \in \G_n : \min_{A \subseteq \br{n}} F_A(d) \le F \}\big)\le F^{n/10} \cdot \exp\left(n^{5/3}\log(n)\right).
  \end{equation*}
\end{lem}
\begin{proof}
  For a metric space $d\in\M_n$ and a set $B\subsetneq\br{n}$, define
  \[
    F_B^*(d):=\prod_{i\in B} \left(2\min_{j \in \br{n} \setminus B} d_{ij} \right).
  \]
  The difference between $F_B$ and $F_B^*$ is that, in the definition of $F_B$, the index $j$ minimising $d_{ij}$ is chosen arbitrarily while, in the definition of $F_B^*$, it is chosen from outside of $B$. For each $i \in B$, let $j_i^B(d)$ denote an (arbitrary) such index, that is, $j_i^B(d)$ is an arbitrary $j\in \br{n}\setminus B$ for which $\dist{i}{j} = \min_{k\in\br{n}\setminus B}\dist{i}{k}$. We shall shorthand $j_i(d) := j_i^{\{i\}}(d)$.

  Suppose that $d \in \G_n$ and that $F_A(d)\le F$ for some $A \subseteq \br{n}$. We first show that there exists a subset $B\subseteq A$
  such that
  \begin{equation}\label{eq:good_B_property}
    \text{$|B|\le \frac{n}{2}$\quad and\quad$F_B^*(d)\le F^{1/4}$}.
  \end{equation}
  To see this, let $R$ be a uniformly chosen random subset of $\br{n}$ with $\lfloor n/2 \rfloor$ elements, let
  \[
    B = \{i \in A \cap R :  j_i(d) \notin R\},
  \]
  and note that
  \[
    \E\left[\log\big(F_B^*(d)\big)\right] = \sum_{i \in A} \P(i \in B) \cdot \log\big(2d_{i,j_i(d)}\big) = p \cdot \log\big(F_A(d)\big),
  \]
  where
  \[
    p = \frac{\lfloor n/2 \rfloor \lceil n/2 \rceil}{n(n-1)} \ge \frac{1}{4}.
  \]
  Since $F_A(d) \le F \le 1$, there exists a choice of $R$ for which the set $B$ satisfies $F_B^*(d)\le F_{A}(d)^{1/4} \le F^{1/4}$.

  For a set $B \subseteq \br{n}$ with at most $n/2$ elements, a function $J \colon \br{n} \to \br{n}$, and a set $T \subseteq \binom{\br{n}}{2}$ with $|T| = \lfloor n^{5/3} \rfloor$, define
  \begin{multline*}
    \G_n^{B,T,J} := \Big\{d \in \M_n : F_B^*(d)\le F^{1/4}, j_i^B(d) = J(i) \text{ for all $i\in B$}, \\
    \text{ and } \dist{i}{j}\ge 1\text{ if }\{i,j\}\notin T\Big\}.
  \end{multline*}
  We may construct each space in $\G_n^{B,T,J}$ as follows. We first choose all the distances $d_{ij}$ with $\{i, j\} \in \binom{\br{n} \setminus B}{2} \cup \binom{B}{2}$ and the $|B|$ distances $d_{i,J(i)}$ with $i \in B$. Since $d_{ij} \in [1,2]$ when $ij \notin T$ and $d_{ij} \in [0,2]$ otherwise, the volume of all such choices is at most $2^{|T|}$. Now, for every $i\in B$ and $k\notin B\cup\{J(i)\}$, the distance $d_{ik}$ must satisfy $|d_{ik}-d_{J(i),k}|\le d_{i,J(i)}$. As a result, given all the other distances, the volume of the set of valid choices for all such $d_{ik}$ is not more than
  \begin{equation*}
    \prod_{i\in B\vphantom{k\in \br{n}\setminus(B\cup\{J(i)\})}} \prod_{k \notin B\cup\{J(i)\}} 2\dist{i,}{J(i)} =F_B^*(d)^{n - |B| -
      1} \le F^{(n-|B|-1)/4}.
  \end{equation*}
  We thus get
  \begin{equation*}
      \vol(\G_n^{B,T,J}) \le 2^{|T|} \cdot F^{(n-|B|-1)/4} \le 2^{n^{5/3}}  F^{n/10},
  \end{equation*}
  for $n$ sufficiently large. Summing over all possible choices for $B$, $T$, and $J$, we have
  \begin{align*}
    \vol\big(\{d \in \G : \min_{A \subseteq \br{n}} F_A(d) \le F\}\big)  & \le 2^n n^n \binom{n^2}{\lfloor n^{5/3} \rfloor} \cdot 2^{n^{5/3}}F^{n/10} \\
                                                                         & \le F^{n/10} \cdot \exp\big(n^{5/3}\log (n)\big),
  \end{align*}
  as claimed.
\end{proof}

The second step in the proof of Proposition~\ref{prop:volume_comparison} is showing that, in a typical metric space in $\G_n$, distances significantly shorter than one do not form large matchings. To this end, for a constant $\rho > 0$ and $d \in \M_n$, we define
\begin{equation}
  \label{eq:T_rho_d_def}
  T^\rho(d) := \left\{\{i,j\} \in \binom{\br{n}}{2} : d_{ij} <  1 - n^{-\rho} \right\}.
\end{equation}

\begin{lem}
  \label{lem:short-matching}
  If $\mu$ and $\rho$ are positive constants satisfying
  \begin{equation}\label{eq:mu_rho_cond}
    \mu + 2\rho < \frac{1}{3},
  \end{equation}
  then, for all sufficiently large $n$,
  \begin{multline*}
    \vol\big(\{d\in\G_n : \text{$T^\rho(d)$ contains a matching}
    \text{ of size at least $n^{1-\mu}$}\}\big) \\
    \le \exp\left(-\frac{n^{2-2\rho-\mu}}{4}\right).
  \end{multline*}
\end{lem}

\begin{proof}
  Let $\mu$ and $\rho$ be positive constants satisfying~\eqref{eq:mu_rho_cond}. For disjoint $M, T \subseteq \binom{\br{n}}{2}$ such that $M$ is a matching with $|M| = \lceil n^{1-\mu} \rceil$ and $|T| = \lfloor n^{5/3}\rfloor$, let
  \[
    \G_n^{M,T} := \left\{d \in \M_n : T^\rho(d) \supseteq M \text{ and } \dist{i}{j}\ge 1\text{ if }\{i,j\}\notin T\right\}.
  \]
  Denote by $V(M)$ the set of $2|M|$ endpoints of edges of $M$ and let $\we^{M,T}$ be the set of all triangles that contain an edge of $M$ and two edges that are not in $T$ and whose common endpoint is not in $V(M)$, that is,
  \[
    \we^{M,T} := \bigg\{(\{i,j\}, k) \in M \times \br{n} : \{i,k\}, \{j,k\} \not\in T, k \not\in V(M) \bigg\}.
  \]
  Observe that every edge in $T$ can `prevent' no more than one triangle from belonging to $\we^{M,T}$, since $M$ is a matching and since $k$ is not  allowed to be in $V(M)$. Hence,
  \begin{equation}
    \label{eq:LMT-size}
    |\we^{M,T}| \ge |M|(n-2|M|) - |T| \ge \frac{n^{2-\mu}}{2},
  \end{equation}
  as $2-\mu > 5/3$, by~\eqref{eq:mu_rho_cond}, and $n$ is sufficiently large.

  As in the proof of Lemma~\ref{lem:Si-small}, the crucial observation is that, if $(\{i, j\}, k) \in \we^{M,T}$, then $(d_{ik},d_{jk})\in W'$, where
  \begin{equation*}
    W' := \left\{(a,b) : 1\le a,b \le 2,\, |a-b|\le 1-n^{-\rho}\right\}.
  \end{equation*}
  Consequently, $\G_n^{M,T}$ is contained in the following product set:
  \begin{equation*}
    \prod_{\{i,j\}\in T}\{\dist{i}{j}\le 2\}
    \prod_{(\{i,j\},k) \in \we^{M,T}} \{(\dist{i}{k}, \dist{j}{k})\in W'\}
    \prod_{\textrm{remaining }\{i,j\}}\{1\le \dist{i}{j}\le 2\}.
  \end{equation*}
  Since
  \begin{equation*}
    \vol(W') = 1 - n^{-2\rho},
  \end{equation*}
  we conclude, using~\eqref{eq:LMT-size}, that
  \begin{align*}
    \vol(\G_n^{M,T}) & \le 2^{|T|}\cdot\vol(W')^{|\we^{M,T}|} \le 2^{n^{5/3}} \cdot \left(1 - n^{-2\rho}\right)^{\frac12 n^{2-\mu}} \\
    & \le \exp\left(- n^{2-\mu-2\rho}/3 \right),
  \end{align*}
  where in the last inequality we used \eqref{eq:mu_rho_cond}.
  Summing over all possible choices for $M$ and $T$, we have
  \begin{multline*}
    \vol\big(\{d\in\G_n : \text{$T^\rho(d)$ contains a matching of size at least $n^{1-\mu}$}\}\big) \\
    \le\binom{n^2}{\lceil n^{1-\mu} \rceil} \binom{n^2}{\lfloor n^{5/3} \rfloor} \exp\left(- n^{2-\mu-2\rho}/3 \right),
  \end{multline*}
  from which the lemma follows, again using~\eqref{eq:mu_rho_cond}.
\end{proof}

The two lemmas enable us to compare the volumes of $\M_{n}$ and $\M_{n+1}$.
\begin{proof}[Proof of Proposition~\ref{prop:volume_comparison}]
  Recall the definition of $T^\rho(d)$ from~\eqref{eq:T_rho_d_def} and the definition of $F_A(d)$ from~\eqref{eq:F_A_def}. Recall also that $c = 1/30$ and let
  \begin{equation}\label{eq:varphi_def}
    \varphi:=6c, \qquad \rho := 3c, \quad \text{and} \quad \mu := 3c.
  \end{equation}
  Define
  \begin{align*}
    \M_n^1&:=\left\{d \in \M_n : \min_{A\subseteq \br{n}} F_A(d) > \exp(-n^{1-\varphi}) \right\},\\
    \M_n^2&:=\left\{\text{$T^\rho(d)$ contains no matching of size at least
    $n^{1-\mu}$}\right\}
  \end{align*}
  and let
  \[
    \M_n^*:=\G_n \cap \M_n^1 \cap \M_n^2.
  \]
  Since $2 - \varphi > 5/3$ and $\mu + 2\rho = 9c < 1/3$, we may use estimate~\eqref{eq:used_to_be_lem}, Lemma~\ref{lem:F-lemma}, Lemma~\ref{lem:short-matching}, and the estimate $\vol(\M_n) \ge 1$ to conclude that, for sufficiently large $n$,
  \begin{equation}\label{eq:M_n_*_volume}
    \vol(\M_n^*)\ge \frac{1}{2}\vol(\M_n).
  \end{equation}
  For $d\in\M_n$ define
  \begin{equation*}
    Q(d):=\left\{i\in \br{n} : \min_{j\in\br{n}\setminus\{i\}}
    \dist{i}{j}<\frac{1}{2} - \frac{n^{-\rho}}{2}\right\}
  \end{equation*}
  and let
  \begin{equation*}
    V(d):=\left\{\text{the vertices of a largest matching in $T^\rho(d)$}\right\},
  \end{equation*}
  where, if there are several largest matchings, we let $V(d)$ to be the vertex set of an arbitrary one of them. For the sake of brevity, from now on we shall write $Q$ and $V$ in place of $Q(d)$ and $V(d)$.

  Let $d \in\M_n^*$. We aim to define a set of metric spaces in $\M_{n+1}$ which extend $d$. More precisely, we shall find a voluminous family of metric spaces $d' \in\M_{n+1}$ which satisfy
  \begin{equation}\label{eq:d_n_plus_1_prop}
    d_{ij}' = d_{ij},\quad \{i,j\}\in
    \binom{\br{n}}{2}.
  \end{equation}
  To this end, define, for $\delta > 0$,
  \begin{equation*}
    I(\delta):=\left[\frac{3}{2} - \frac{\delta}{2}, \frac{3}{2} +
    \frac{\delta}{2}\right]
  \end{equation*}
  and the following quantities
  \begin{equation*}
    \delta_1(i) :=
    \begin{cases}
      n^{-\rho},& \text{if $i\in V$}, \\
      1, & \text{otherwise},
    \end{cases}
    \qquad
    \delta_2(i) :=
    \begin{cases}
      \min_{j\ne i}d_{ij}, & \text{if $i\in Q$}, \\
      1, & \text{otherwise}.
    \end{cases}
  \end{equation*}

  \begin{claim}
    \label{claim:d-extensions}
    Every $d' \in [0,2]^{\binom{\br{n+1}}{2}}$ satisfying \eqref{eq:d_n_plus_1_prop} and having
    \begin{equation*}
      d_{i,n+1}' \in I\left(\min\big\{\delta_1(i),\delta_2(i), 1-2n^{-\rho}\big\}\right)
    \end{equation*}
    belongs to $\M_{n+1}$.
  \end{claim}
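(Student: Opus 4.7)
The plan is to verify directly that $d'$ satisfies all metric space axioms, i.e., that every coordinate lies in $(0,2]$ and every triangle inequality holds. Since each $\delta_k := \min\{\delta_1(k),\delta_2(k),1-2n^{-\rho}\}$ is at most $1-2n^{-\rho} < 1$, every new distance $d'_{i,n+1}$ lies in $I(1-2n^{-\rho}) \subseteq [1+n^{-\rho},\,2-n^{-\rho}]$, so the range condition holds and in particular $d'_{i,n+1} \ge 1$. Triangle inequalities whose three vertices lie in $\br{n}$ are inherited from $d \in \M_n$. Thus the remaining task is to verify, for every pair of distinct $i,j \in \br{n}$, the three triangle inequalities for the triangle $\{i,j,n+1\}$.

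Two of these inequalities are essentially trivial: the sum inequality $d_{ij} \le d'_{i,n+1} + d'_{j,n+1}$ holds because the right-hand side is at least $2 \ge d_{ij}$. The remaining two combine into the single requirement $|d'_{i,n+1} - d'_{j,n+1}| \le d_{ij}$. Because $d'_{k,n+1} \in [3/2 - \delta_k/2,\, 3/2 + \delta_k/2]$, it suffices to prove that
\[
    \frac{\delta_i + \delta_j}{2} \le d_{ij} \quad \text{for all distinct } i,j \in \br{n}.
\]
This is where the sets $V$, $Q$, and $T^\rho(d)$ enter, and I would split according to the size of $d_{ij}$ into three cases. If $d_{ij} \ge 1-n^{-\rho}$, so that $\{i,j\} \notin T^\rho(d)$, the universal bound $\delta_k \le 1-2n^{-\rho}$ immediately gives $(\delta_i+\delta_j)/2 \le 1-2n^{-\rho} \le d_{ij}$.

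If $d_{ij} < 1-n^{-\rho}$, then $\{i,j\} \in T^\rho(d)$, and the maximality of the matching defining $V$ forces at least one of $i,j$—say $i$—to lie in $V$, yielding $\delta_i \le n^{-\rho}$. When additionally $d_{ij} < \tfrac12 - \tfrac{n^{-\rho}}{2}$, both $\min_k d_{ik}$ and $\min_k d_{jk}$ are bounded by $d_{ij}$, so both $i$ and $j$ belong to $Q$, and therefore $\delta_i, \delta_j \le d_{ij}$, which gives the bound. When instead $\tfrac12 - \tfrac{n^{-\rho}}{2} \le d_{ij} < 1-n^{-\rho}$, the crude bound $\delta_j \le 1-2n^{-\rho}$ combined with $\delta_i \le n^{-\rho}$ gives $(\delta_i+\delta_j)/2 \le \tfrac12 - \tfrac{n^{-\rho}}{2} \le d_{ij}$.

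There is no deep idea here; the main obstacle is bookkeeping, namely to check that the three layers of truncation in the definition of $\delta_k$—through $V$, through $Q$, and through the universal ceiling $1-2n^{-\rho}$—together cover every possible configuration of $d_{ij}$. The one combinatorial input is the maximality of the matching defining $V$, which ensures that whenever $d_{ij}$ is a short distance (i.e., in $T^\rho(d)$), at least one endpoint is already forced into the set $V$ whose vertices enjoy the sharp $n^{-\rho}$ bound on $\delta$.
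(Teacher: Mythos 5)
Your proof is correct and follows essentially the same route as the paper's: reduce to the single inequality $|d'_{i,n+1}-d'_{j,n+1}|\le d_{ij}$ and verify it by the same three-way case split on $d_{ij}$, using $Q$ for the shortest distances, the matching maximality of $V$ for the intermediate range, and the universal cap $1-2n^{-\rho}$ otherwise. The only difference is cosmetic (you phrase the bound via $(\delta_i+\delta_j)/2\le d_{ij}$ and order the cases large-to-small).
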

  \begin{proof}
    Since $d$ is a metric space, by~\eqref{eq:d_n_plus_1_prop}, it suffices to verify the triangle inequality for triangles $\{i,j,n+1\}$ with $\{i,j\}\in\binom{\br{n}}{2}$. Note that $d_{i,n+1}',
    d_{j,n+1}' \ge 1$ whereas $d_{ij}' = d_{ij} \le 2$, so that we only need to verify that
    \begin{equation}
      \label{eq:triangle_i_j_n_plus_1}
      |d_{i,n+1}' - d_{j,n+1}'|\le d_{ij}.
    \end{equation}
    We consider three cases, according to the value of $d_{ij}$.
    \begin{itemize}
    \item
      If $d_{ij}<\frac 12 - \frac 12 n^{-\rho}$, then $i, j\in Q$ and
      \begin{equation*}
        |d_{i,n+1}' - d_{j,n+1}'| \le \frac{1}{2}\left(\min_{k\ne i}d_{ik} + \min_{k\ne j} d_{jk} \right) \le d_{ij}.
      \end{equation*}
    \item
      If $\frac{1}{2} - \frac{1}{2} n^{-\rho} \le d_{ij}<1-n^{\rho}$, then at least one of $i$ and $j$ is in $V$, as $\{i, j\} \in T^{\rho}(d)$ and $V$ is the vertex set of a largest matching in $T^{\rho}(d)$, and
    \begin{equation*}
      |d_{i,n+1}' - d_{j,n+1}'| \le \frac{1}{2}\left(n^{-\rho} + (1-2n^{-\rho})\right)=\frac 12-\frac12 n^{-\rho} \le d_{ij}.
    \end{equation*}
  \item
    Finally, if $d_{ij}\ge 1-n^{-\rho}$, then
    \begin{equation*}
      |d_{i,n+1}' - d_{j,n+1}'| \le \frac{1}{2}\left((1-2n^{-\rho}) + (1-2n^{-\rho})\right) \le d_{ij}.
    \end{equation*}
  \end{itemize}
  The proof of the claim is now complete.
\end{proof}

Claim~\ref{claim:d-extensions} implies a lower bound on the ratio of the volumes of $\M_{n+1}$ and $\M_n^*$. More precisely, for each $d \in \M_n^*$, the volume of extensions of $d$ to a $d' \in \M_{n+1}$ is at least
\[
  \prod_{i=1}^n \min\{\delta_1(i),\delta_2(i),1-2n^{-\rho}\}\ge
  \left(n^{-\rho}\right)^{|V|} \cdot \prod_{i\in Q}\min_{j\ne
    i}d_{ij} \cdot (1-2n^{-\rho})^n.
\]
By the definition of $F_{Q}(d)$, see~\eqref{eq:F_A_def},
\[
  \prod_{i \in Q} \min_{j \neq i} d_{ij} = 2^{-|Q|} \cdot F_{Q}(d),
\]
while the definition of $Q$ ensures that
  \begin{equation*}
    F_{Q}(d) \le \left(1 - n^{-\rho}\right)^{|Q|} \le \exp\big(-|Q| \cdot n^{-\rho}\big) \le \left(2^{-|Q|}\right)^{n^{-\rho}}.
  \end{equation*}
  Since the fact that $d \in \M_n^1$ gives $F_Q(d) > \exp(-n^{1-\varphi})$, we deduce that
  \[
    2^{-|Q|} \cdot F_{Q}(d) \ge F_{Q}(d)^{n^\rho+1} > \exp\left(-2n^{1-\varphi+\rho}\right).
  \]
  Finally, as $d \in \M_n^2$, we have $|V| \le 2n^{1-\mu}$ and we conclude that
  \begin{align*}
    \vol(\M_{n+1}) &\ge \left(n^{-\rho}\right)^{2n^{1-\mu}} \cdot \exp\big(-2n^{1 - \varphi + \rho}\big) \cdot \left(1-2n^{-\rho}\right)^n \cdot \vol(\M_n^*)\\
    &\ge \exp\left(-n^{1-3c} \log (n) \right) \cdot \vol(\M_n),
  \end{align*}
  where the last inequality follows from~\eqref{eq:varphi_def}, \eqref{eq:M_n_*_volume}, and our assumption that $n$ is sufficiently large.
\end{proof}

\section{Other approaches to estimating the
volume}\label{sec:other_approaches}

A \emph{coloured graph} on vertex set $V$ with \emph{palette} $C$ is simply a function in $C^{\binom{V}{2}}$.
Recall that a \emph{hereditary property} is a family of coloured graphs that is closed under taking subgraphs and isomorphisms.
Questions about the asymptotic growth rate of the volume and the distribution of the edge lengths for a random point in the metric polytope can be viewed as instances of the following very general class of problems:
Describe the distribution of a `uniformly sampled' \emph{coloured graph} (more generally, \emph{coloured hypergraph}) on $n$ vertices, conditioned to satisfy a given \emph{hereditary property}~$\mathcal{P}$, when $n$ is large.
The relevance to our setting is the following: We let $V = \br{n}$ and consider the collection of functions $d: \binom{V}{2} \to (0,2]$ satisfying the hereditary property:
\[
  d_{ik} \le d_{ij}+ d_{jk} \quad \text{for all $i,j,k \in V$}.
\]
A few related approaches have been used to study problems from this class:
exchangeable families of random variables,
Szemer\'edi's regularity lemma, graph limits, and the method of hypergraph containers.
We refer to the survey paper \cite{MR2426176} and references within for a discussion of the connection between exchangeability, the regularity lemma, and graph limits;
for an introduction to the method of hypergraph containers, the reader is referred to the survey paper~\cite{BalMorSam-ICM}.
In this section, we discuss the problem of estimating the volume of the metric polytope using some of these approaches. These approaches may also be used to obtain some structural information on typical samples from the metric polytope.

\begin{table}
\begin{centering}
  \begin{tabular}{| l | c |}
  \hline
    Method & Upper bound on $\log \vol(\M_n)$\\ \hline\hline
    Main result (Theorem~\ref{thm:volume_estimate}) & $O(n^{3/2})$\\ \hline
    Exchangeability & $o(n^2)$ \\ \hline
    Szemer\'edi regularity lemma & $o(n^2)$ \\ \hline
    Hypergraph container method & $O\big(n^{3/2} (\log n)^3\big)$ \\ \hline
    \texorpdfstring{K\H{o}v\'ari}{K\"ov\'ari}--S\'os--Tur\'an & $O\big(\frac{n^2 (\log\log n)^2}{\log n}\big)$\\ \hline
  \end{tabular}
\end{centering}
\smallskip
  \caption{The upper bounds on the volume of the metric polytope provided by our main result and by the alternative approaches presented in Section~\ref{sec:other_approaches}.}
\end{table}

\subsection{Limiting model and exchangeability}

The purpose of this section is to give a `soft' proof of a qualitative version of our main result on the volume. Precisely, we shall show that
\begin{equation}\label{eq:volume exponent}
  \log\vol(\M_n)=o(n^2),
\end{equation}
as in~\eqref{eq:limit constant is one}. The presented proof relies on \emph{exchangeability}.

To motivate the proof method, let us start by recalling de Finetti's theorem \cite{dF59}. It states that the distribution of an exchangeable sequence of random variables is a mixture of distributions of i.i.d.\ sequences of random variables. Here, we recall that: (i) a sequence $(X_n)$ of random variables is called \emph{exchangeable} if, for every finitely-supported permutation~$\sigma$, the sequence $(X_{\sigma(n)})$ has the same joint distribution as the sequence $(X_n)$; (ii) the distribution of a sequence is a \emph{mixture} of distributions of i.i.d.\ random variables if it can be sampled by first randomly sampling a distribution $D$ and then sampling the variables of the sequence independently from the distribution $D$.

De Finetti's theorem implies the following \emph{conditional independence} property: If $(X_n)$ is exchangeable, then, for each $n_0$, after conditioning on $\{X_n : n > n_0\}$ the random variables $X_1,\dotsc,X_{n_0}$ become independent and identically distributed. Indeed, the conditioning determines which distribution $D$ is used in the underlying i.i.d.\ sequence and independence follows.

As metric spaces $(d_{ij})$ are indexed by unordered pairs $\{i,j\}$, their relevant context is not that of exchangeable sequences but rather that of \emph{exchangeable arrays}. An exchangeable array is a two-dimensional array of random variables $(X_{ij})$, with the index set being all unordered pairs of distinct positive integers, such that, for each finitely-supported permutation~$\sigma$, the array $(X_{\sigma(i)\sigma(j)})$ has the same distribution as the array $(X_{ij})$. (The names \emph{weak exchangeability} and \emph{partial exchangeability} are also used for notions of this type. Higher-dimensional versions and variants where different permutations are applied to the coordinates have also been discussed in the literature.) A representation theorem similar to, but more complicated than, de Finetti's theorem exists for exchangeable arrays; see~\cite{aldous_rep1981,hoover_exchange1983,kallenberg_rep_1989} and especially \cite[Theorem 14.21]{ald85}. It again implies a conditional independence property, stated as follows.

\begin{lem}\label{lem:cond_independence}
  Let $(X_{ij})$ be an exchangeable array. For each integer $n_0\ge 1$, conditioned on $\{X_{ij} :\max\{i,j\} > n_0\}$ the random variables $\{X_{ij} : i,j\le n_0\}$ become independent.
\end{lem}

We note that, unlike de Finneti's theorem, the random variables $\{X_{ij} : i,j\le n_0\}$ are not necessarily identically distributed after the conditioning. For completeness, we provide a short proof.

\begin{proof}[Proof of Lemma~\ref{lem:cond_independence}]
The proof is by induction on $n_0$. If $n_0 \in \{1, 2\}$, then the assertion of the lemma is vacuously true (as the set $\{X_{ij} : i, j \le n_0\}$ is either empty or contains only one variable).  Suppose then that $n_0 \ge 3$ and that the result has already been established for $n_0 - 1$.

Write $\F_{n}^N$ and $\F_n$ for the sigma algebras generated by the collections $\{X_{ij} : n\le \max\{i,j\}\le N\}$ and $\{X_{ij} : \max\{i,j\}\ge n\}$, respectively. Let $A\subseteq\R^{\binom{\br{n_0-1}}{2}}$ be a Borel set. Levy's upward theorem (a consequence of the martingale convergence theorem) shows that
\begin{align}
  &\P\big((X_{ij})_{i,j\le n_0-1}\in A\mid\F_{n_0}^N\big) \to \P\big((X_{ij})_{i,j\le n_0-1}\in A\mid\F_{n_0}\big),\label{eq:Levy upward theorem}\\
  &\P\big((X_{ij})_{i,j\le n_0-1}\in A\mid\F_{n_0+1}^{N+1}\big) \to \P\big((X_{ij})_{i,j\le n_0-1}\in A\mid\F_{n_0+1}\big),\nonumber
\end{align}
as $N\to\infty$, almost surely. In addition, the fact that $(X_{ij})$ is an exchangeable array implies that, for each $N\ge n_0+1$,
\begin{equation*}
  \P\big((X_{ij})_{i,j\le n_0-1}\in A\mid\F_{n_0}^N\big) \eqd \P\big((X_{ij})_{i,j\le n_0-1}\in A\mid\F_{n_0+1}^{N+1}\big).
\end{equation*}
Consequently,
\begin{equation*}
  \P\big((X_{ij})_{i,j\le n_0-1}\in A\mid\F_{n_0}\big)\eqd \P\big((X_{ij})_{i,j\le n_0-1}\in A\mid\F_{n_0+1}\big),
\end{equation*}
which implies that, in fact,
\begin{equation}\label{eq:conditional equality almost surely}
  \P\big((X_{ij})_{i,j\le n_0-1}\in A\mid\F_{n_0}\big)= \P\big((X_{ij})_{i,j\le n_0-1}\in A\mid\F_{n_0+1}\big)
\end{equation}
almost surely. To see the last conclusion, observe that if $X$ is a random variable with finite second moment and $\mathcal{G}_1\subseteq\mathcal{G}_2$ are sigma algebras, then
\begin{equation*}
  \E\left[\E[X\mid\mathcal{G}_2]^2\right]=\E\left[\E[X\mid\mathcal{G}_1]^2\right] + \E\left[(\E[X\mid\mathcal{G}_2] - \E[X\mid\mathcal{G}_1])^2\right].
\end{equation*}
Thus, if $\E[X\mid\mathcal{G}_1] \eqd \E[X\mid\mathcal{G}_2]$, then $\E[X\mid\mathcal{G}_1]= \E[X\mid\mathcal{G}_2]$ almost surely.

As~\eqref{eq:conditional equality almost surely} holds for arbitrary Borel $A$, we conclude (recalling the definition of $\F_n$) that, conditioned on $\F_{n_0+1}$, the collection of random variables $\{X_{ij} : i,j\le n_0-1\}$ is independent of the collection $\{X_{ij} : \max\{i,j\} = n_0\}$. Together with the induction hypothesis this implies that, conditioned on $\F_{n_0+1}$, the random variables $\{X_{ij} : i,j\le n_0-1\}$ become independent. These facts together with another use of the exchangeability property imply the lemma.
\end{proof}

We proceed to discuss the metric polytope, aiming to prove~\eqref{eq:volume exponent}. It is convenient to pass to a discrete problem, to avoid questions on the existence of densities and convergence issues. Specifically, given integers $M$ and $n$, define the discrete metric polytope $\M_n^M$ by
\begin{equation*}
  \M_n^M := \left\{ (\dist{i}{j})\in\{1, \dotsc, M\}^{\binom{\br{n}}{2}} : \dist{i}{j}\le \dist{i}{k} + \dist{k}{j}\text{ for all $i,j,k$} \right\},
\end{equation*}
see also Section~\ref{sec:discrete-problem}. We shall prove that, for each fixed \emph{even} $M$,
\begin{equation}
  \label{eq:exchangeability-lemma-claim}
  \limsup_{n\to\infty}\frac{\log(|\M_n^M|)}{\binom{n}{2}} \le \log\left(\frac{M+2}{2}\right).
\end{equation}
As $\vol(\M_n) \le \left(\frac{2}{M}\right)^{\binom{n}{2}}|\M_n^M|$ for all $n,M$, see~\eqref{eq:card_MnM-vol_Mn} in Section~\ref{sec:discrete-problem} below, \eqref{eq:exchangeability-lemma-claim} will imply~\eqref{eq:volume exponent}.

Fix an even $M$. To apply Lemma~\ref{lem:cond_independence}, embed $\M_n^M$ into $\br{M}^{\binom{\mathbb{N}}{2}}$ by setting all distances involving points $i>n$ to zero. Denote by $\mu_n^M$ the uniform distribution on $\M_n^M$, viewed as a distribution on the space $\br{M}^{\binom{\mathbb{N}}{2}}$ via this embedding. As the set of probability measures on this space is compact with respect to convergence in distribution, there exists a subsequence $n_m$ on which the limit superior in~\eqref{eq:exchangeability-lemma-claim} is realized and such that $\mu_{n_m}^M$ converges in distribution. Denote the limit measure by $\mu_\infty^M$ and note that it is necessarily supported on the infinite-dimensional discrete metric polytope
\begin{equation*}
\M_\infty^M:=\left\{ (d_{ij}) \in \{1, \dotsc, M\}^{\binom{\mathbb{N}}{2}}  : d_{ij}\le d_{ik}+d_{kj}\text{ for all
$i,j,k$} \right\}.
\end{equation*}
Write $d^\infty = (d_{ij}^\infty)$ for a sample from $\mu_\infty^M$. Note that $d^\infty$ is an exchangeable array, inheriting its exchangeability properties from the measures $(\mu_n^M)$. As before, we write $\F_{n}^N$ and $\F_n$ for the sigma algebras generated by the collections $\{d_{ij}^\infty : n\le \max\{i,j\}\le N\}$ and $\{d_{ij}^\infty : \max\{i,j\}\ge n\}$, respectively.
Lemma~\ref{lem:cond_independence} shows that, conditioned on $\F_4$, the random variables $(d_{12}^\infty,d_{13}^\infty,d_{23}^\infty)$ become independent. Thus the support of their (conditional) joint distribution is in some axis-parallel discrete box fully contained in $\M_3^M$. An analogue of Lemma~\ref{lem:independent_max_volume} (with an analogous proof) shows that such a box has cardinality at most $\left(\frac{M+2}{2}\right)^3$. In particular,
\begin{equation}\label{eq:limiting cond entropy}
  H_{\text{S}}(d_{12}^\infty,d_{13}^\infty,d_{23}^\infty\mid \F_4)\le 3\log\left(\frac{M+2}{2}\right),
\end{equation}
where $H_{\text{S}}$ denotes Shannon's entropy. Recalling our use of Levy's upward theorem in~\eqref{eq:Levy upward theorem}, and noting that $(d_{12}^\infty, d_{13}^\infty, d_{23}^\infty)$ is supported on a finite set, we see that the conditional distribution of these random variables given $\F_4^N$ converges as $N\to\infty$ to their conditional distribution given $\F_4$, almost surely. In particular (again, by the finite support),
\begin{equation}\label{eq:towards the limiting cond entropy}
  \lim_{N\to\infty}H_{\text{S}}(d_{12}^\infty,d_{13}^\infty,d_{23}^\infty\mid \F_4^N) = H_{\text{S}}(d_{12}^\infty,d_{13}^\infty,d_{23}^\infty\mid \F_4).
\end{equation}
Let $\eps>0$. Combining~\eqref{eq:limiting cond entropy} and~\eqref{eq:towards the limiting cond entropy} shows that, for some $N_0$,
\begin{equation*}
  H_{\text{S}}(d_{12}^\infty,d_{13}^\infty,d_{23}^\infty\mid \F_4^{N_0})\le 3\log\left(\frac{M+2}{2}\right)+\eps.
\end{equation*}
Let $d^n = (d_{ij}^n)$ be a sample from $\mu_n^M$. Similar to the above, the fact that $\mu_{n_m}^M\to\mu_\infty^M$ and $\{d_{ij}^n : i,j\le N_0\}$ is finitely-supported implies that
\begin{equation*}
  H_{\text{S}}\left(d_{12}^{n_m},d_{13}^{n_m},d_{23}^{n_m}| \big\{d_{ij}^{n_m} : 4\le \max\{i,j\}\le N_0\big\}\right)\le 3\log\left(\frac{M+2}{2}\right)+2\eps
\end{equation*}
for all large $m$. By symmetry and monotonicity of conditional entropy, we conclude that, for all large $m$ and all distinct $i, j, k \in \{1, \dotsc, n_m - N_0+4\}$,
\begin{multline*}
  H_{\text{S}}\left(d_{ij}^{n_m},d_{ik}^{n_m},d_{jk}^{n_m}\mid \big\{d_{ij}^{n_m} : n_m-N_0+4\le \max\{i,j\}\le n_m\big\}\right)\\
  \le 3\log\left(\frac{M+2}{2}\right)+2\eps.
\end{multline*}
We may now apply the subadditivity argument from the proof outline, Section~\ref{sec:proof outline}, to obtain that, for all large $m$,
\begin{equation*}
  \log(|\M_{n_m}^M|)\le C \log(M) N_0 n_m + \left(\log\left(\frac{M+2}{2}\right)+\frac{2}{3}\eps\right)\cdot\binom{n_m}{2}
\end{equation*}
for an absolute constant $C$. Finally, recalling that the limit superior in~\eqref{eq:exchangeability-lemma-claim} is realized along $n_m$, and noting that $\eps$ is arbitrary and $N_0$ is a function only of $\eps$ and $\mu_\infty^M$, we conclude that~\eqref{eq:exchangeability-lemma-claim} holds.

\subsection{The Szemer\'edi regularity lemma approach}

In this section, we show how a fairly standard application of (a multi-coloured version of) Szemer\'edi's regularity lemma gives an alternative proof of~\eqref{eq:limit constant is one}. The argument presented here may be seen as an adaptation of the classical argument of Erd\H{o}s, Frankl, and R\"odl~\cite{ErFrRo86} proving that the number of $H$-free graphs with $n$ vertices is $2^{\exH + o(n^2)}$, where $\exH$ denotes the maximum number of edges in an $H$-free graph with $n$~vertices. This approach was independently pursued by Mubayi and Terry~\cite{mubayi2019discrete}.

Recall that a bipartite graph $G$ with parts $V_1$ and $V_2$ is \emph{$\eps$-regular} if, for every $W_1 \subseteq V_1$ with $|W_1| \ge \eps |V_1|$ and $W_2 \subseteq V_2$ with $|W_2| \ge \eps |V_2|$, we have
\[
  \left| \frac{e_G(W_1, W_2)}{|W_1| |W_2|} - \frac{e_G(V_1, V_2)}{|V_1| |V_2|} \right|  \le \eps,
\]
where $e_G(W_1,W_2)$ is the number of edges connecting a vertex of $W_1$ to a vertex of $W_2$. An~\emph{equipartition} of a set $V$ is a partition of $V$ into $V_1, \dotsc, V_k$ such that $\big| |V_i| - |V_j| \big| \le 1$ for all $i$ and $j$. The celebrated regularity lemma of Szemer\'edi~\cite{Sz78} states that, for every positive $\eps$, there exists a constant $R$ such that the vertex set of every graph $G$ admits an equipartition into at most $R$ parts with the property that the bipartite subgraphs of $G$ induced by all but at most an $\eps$-proportion of all pairs of parts are $\eps$-regular. We shall be needing the following straightforward generalisation of this statement to edge-coloured graphs. For the remainder of this section, given a positive integer $M$, we shall refer to a colouring of all pairs of elements of a set $V$ with elements of $\br{M}$ as an \emph{$M$-graph} with vertex set $V$. Moreover, given an $M$-graph $G$ and a $c \in \br{M}$, we shall denote by $G(c)$ the graph whose edges are all pairs of vertices to which $G$ assigns the colour $c$. The following straightforward generalisation of Szemer\'edi's regularity lemma to $M$-graphs was formulated in~\cite{AxMa11}. It may be easily deduced from the standard proof of the regularity lemma.

\begin{thm}[{\cite{AxMa11}}]
  \label{thm:colored-reg-lemma}
  For every $\eps > 0$, $M$, and $r_0$, there exists an integer $R$ with the following property.
  The vertex set of an arbitrary $M$-graph $G$ admits an equipartition $\{V_1, \dotsc, V_r\}$, where $r_0 \le r \le R$, such that, for all but at most $\eps \binom{r}{2}$ pairs $\{i,j\} \in \binom{\br{r}}{2}$, the bipartite subgraph of $G(c)$ induced by $V_i$ and $V_j$ is $\eps$-regular for every $c \in \br{M}$.
\end{thm}

For the sake of brevity, we shall refer to partitions satisfying the assertion of the theorem as \emph{$\eps$-regular partitions}. As in most standard applications of the regularity lemma, we shall use the following straightforward property of $\eps$-regular graphs, the \emph{embedding lemma} for triangles. For a more general version of the embedding lemma, we refer the reader to the classical survey of Koml\'os and Simonovits~\cite{KoSi96}.

\begin{prop}
  \label{prop:triangle-embedding}
  Let $\eps \in (0,1/2)$, suppose that $V_1$, $V_2$, and $V_3$ are
  pairwise disjoint sets, and let $G$ be a graph with vertex set $V_1
  \cup V_2 \cup V_3$. If, for each pair $\{i,j\} \in \binom{\br{3}}{2}$,
  the bipartite subgraph of $G$ induced by $V_i$ and $V_j$ is
  $\eps$-regular and satisfies $e_G(V_i, V_j) \ge 2\eps|V_i||V_j|$,
  then $G$ contains a triangle.
\end{prop}

As in the previous section, given integers $M$ and $n$, we define
\[
\M_n^M := \left\{(\dist{i}{j})\in\{1, \dotsc, M\}^{\binom{\br{n}}{2}} : \dist{i}{j}\le \dist{i}{k} + \dist{k}{j}\text{ for all $i,j,k$}\right\},
\]
see also Section~\ref{sec:discrete-problem} below. We shall prove that
\begin{equation}
  \label{eq:reg-lemma-claim}
  |\M_n^M| \le M^{\delta n^2} \cdot \left(\frac{M+2}{2}\right)^{\binom{n}{2}}
\end{equation}
for each fixed even $M$ and $\delta > 0$, provided that $n$ is sufficiently large. We remark here that Mubayi and Terry~\cite{mubayi2019discrete} independently used a similar approach, combined with a delicate stability analysis, to prove the much more accurate estimate $|\M_n^M| = (1+e^{-\Omega(n)}) \left(\frac{M+2}{2}\right)^{\binom{n}{2}}$ for each fixed even $M$. As $\vol(\M_n) \le \left(\frac{2}{M}\right)^{\binom{n}{2}}|\M_n^M|$, see~\eqref{eq:card_MnM-vol_Mn} in Section~\ref{sec:discrete-problem} below, \eqref{eq:reg-lemma-claim} will imply that $\vol(\M_n) = 2^{o(n^2)}$.

As proofs of both~\eqref{eq:reg-lemma-claim} and the improved estimate of~\cite{mubayi2019discrete} rely on the regularity lemma, the rate of convergence implicit in the $o(n^2)$ term in the exponent is very slow. Possibly, one could use weaker forms of the regularity lemma to improve this rate of convergence. We do not pursue this direction here, but only mention that one such regularity lemma, guaranteeing a regular partition whose number of parts is only exponential in $\eps^{-2}$, was obtained by Frieze and Kannan~\cite{FriKan96,FriKan99}, see also~\cite[Section~1.4]{MR2989432}. (In our context, a multi-coloured version of such a regularity lemma would most likely have been required.)

Fix an even integer $M$ and $\delta \in (0,1/2)$ and let $\eps = \frac{\delta}{10M\log(1/\delta)}$ and $r_0 = 2/\delta$. Choose an arbitrary $G \in \M_n^M$, which may be viewed as an $M$-graph with vertex set $\br{n}$, and apply Theorem~\ref{thm:colored-reg-lemma} to $G$ to obtain an $\eps$-regular partition $\{V_1, \dotsc, V_r\}$ of $\br{n}$ with $r_0 \le r \le R$ for some constant $R = R(M, \delta)$. For every pair $\{i,j\} \in \binom{\br{r}}{2}$, define
\[
D_{ij} = \left\{ c \in \br{M} : e_{G(c)}(V_i, V_j) \ge 2\eps |V_i||V_j| \right\}
\]
and observe that all but at most a $2M\eps$-proportion of pairs in $V_i \times V_j$ are coloured with an element of $D_{ij}$. Call a triple $\{i,j,k\} \in \binom{\br{r}}{3}$ \emph{regular} if the bipartite subgraphs of $G(1), \dotsc, G(M)$ induced by $(V_i,V_j)$, $(V_i, V_k)$, and $(V_j, V_k)$ are all $\eps$-regular. It follows from Proposition~\ref{prop:triangle-embedding} that, for every regular triple $\{i, j, k\}$, we must have $D_{ij} \times D_{ik} \times D_{jk} \subseteq \M_3^M$. Indeed, otherwise $G$ would contain a triple of distances that do not satisfy the triangle inequality. A discrete analogue of Lemma~\ref{lem:independent_max_volume} (with an analogous proof) shows that $D_{ij} \times D_{ik} \times D_{jk}$ has cardinality at most $\left(\frac{M+2}{2}\right)^3$. As $\{V_1, \dotsc, V_r\}$ is an $\eps$-regular partition of $G$, all but at most $3\eps\binom{r}{3}$ triples $\{i,j,k\} \in \binom{\br{r}}{3}$ are regular. Consequently,
\begin{multline}
  \label{eq:prod-Dij}
  \prod_{\{i,j\} \in \binom{\br{r}}{2}} |D_{ij}| = \left( \prod_{\smash{\{i,j,k\} \in \binom{\br{r}}{3}}} |D_{ij}||D_{ik}||D_{jk}|\right)^{\frac{1}{r-2}} \displaybreak[0]\\
  \le \left( \left(\frac{M+2}{2}\right)^{3(1-3\eps)\binom{r}{3}} M^{9\eps\binom{r}{3}} \right)^{\frac{1}{r-2}} \le \left(\frac{M+2}{2^{1-3\eps}}\right)^{\binom{r}{2}}.
\end{multline}

Since $G$ was arbitrary, the above analysis shows that one may construct each element of $\M_n^M$ as follows. First, choose $r$, the equipartition $\{V_1,\dotsc,V_r\}$, and the sets $D_{ij}\subseteq \br{M}$; the number of choices for all three combined is $2^{O(n)}$ (with implicit constant depending on $M$ and $\delta$). Next, for each $\{i,j\} \in \binom{\br{r}}{2}$, choose a set $X_{ij} \subseteq V_i \times V_j$ of at most $2M\eps |V_i| |V_j|$ pairs whose colour will not belong to $D_{ij}$; there are at most $\Big(\genfrac{}{}{0pt}{}{\binom{n}{2}}{\lfloor 2M\eps \binom{n}{2}\rfloor}\Big) \le \exp\big(M\eps \log(e/(2M\eps)) n^2\big)$ ways to do it. Finally, choose colours for all $\binom{n}{2}$ pairs in such a way that each pair in $V_i \times V_j \setminus X_{ij}$ is assigned a colour from $D_{ij}$; the number of ways one can do this is
\begin{equation}
  \label{eq:SzRL-number-of-choices}
  \prod_{i,j}|D_{ij}|^{|V_i\times V_j\setminus X_{ij}|} \cdot M^{\binom{n}{2} - \sum_{i,j} |V_i \times V_j \setminus X_{ij}|}.
\end{equation}
Recalling that $|V_i \times V_j| \ge \lfloor n/r \rfloor^2$ and $|X_{ij}| \le 2M\eps |V_i||V_j|$ for each $\{i,j\} \in \binom{\br{r}}{2}$, inequality~\eqref{eq:prod-Dij} implies that~\eqref{eq:SzRL-number-of-choices} is at most
\begin{equation}
  \label{eq:SzRL-bound}
  \left(\frac{M+2}{2^{1-3\eps}}\right)^{\binom{r}{2} \cdot (1-2M\eps)\lfloor n/r \rfloor^2} \cdot M^{\binom{n}{2} - \binom{r}{2} \cdot (1-2M\eps)\lfloor n/r \rfloor^2}.
\end{equation}
Finally, as $\binom{n}{2} - \binom{r}{2} \lfloor n/r \rfloor^2 \le \frac{1}{r} \binom{n}{2} + r(n-1)$ and $r_0 \le r \le R$, a straightforward calculation shows that~\eqref{eq:SzRL-bound} is at most
\[
\left(\frac{M+2}{2}\right)^{\binom{n}{2}} 2^{\left(\frac{1}{r_0}+2M\eps+3\eps+\frac{2R}{n}\right)\binom{n}{2}}.
\]
This yields the claimed upper bound on $|\M_n^M|$ stated in~\eqref{eq:reg-lemma-claim}, provided that $n$ is sufficiently large, by our choice of $\eps = \eps(M, \delta)$ and~$r_0 = r_0(\delta)$.

\subsection{The hypergraph container method}
\label{sec:container-method}

In this section, which is based on joint work with Rob Morris, we shall show how the method of hypergraph containers can be used to derive a volume estimate of the form
\begin{equation}
  \label{eq:vol-Mn-container}
  \vol(\M_n) \le \exp \left( C n^{3/2} (\log n)^3 \right),
\end{equation}
which falls just a little short of the upper bound established in Theorem~\ref{thm:volume_estimate} using entropy methods. We point out that the arguments presented here are inspired by the (earlier) work of Balogh and Wagner~\cite{BalWag16}, who were the first to use the container method for enumerating finite metric spaces and obtained the bound $\vol(\M_n) \le \exp(n^{11/6+o(1)})$.

The hypergraph container theorems, proved simultaneously, but separately, in~\cite{BalMorSam} and~\cite{SaxTho}, state that the family of independent sets of any uniform hypergraph whose edges are sufficiently evenly distributed can be covered by a small family of \emph{containers}, subsets of vertices of the hypergraph that themselves are nearly independent. The wide applicability of this abstract statement stems from the fact that many discrete structures may be naturally represented as independent sets of some auxiliary hypergraph; in particular, this is the case with the metric spaces in $\M_n^M$. The particular version of the hypergraph container theorem stated below was proved in~\cite{MorSamSax}; see also~\cite{BalMorSam-ICM} for a survey.

Suppose that $\HH$ is a $k$-uniform hypergraph, i.e.\ each (hyper)edge has exactly $k$ vertices. We write $V(\HH)$ to denote the vertex set of $\HH$ and we identify $\HH$ with its (hyper)edge set; we denote by $v(\HH)$ and $e(\HH)$ the numbers of vertices and edges of $\HH$, respectively. A set $I\subseteq V(\HH)$ is called \emph{independent} if it contains no edges of $\HH$. We moreover define, for every $\ell \in \{1, \dotsc, k\}$,
\[
  \Delta_\ell(\HH) = \max\left\{ |\{ S \in \HH \colon T \subseteq S \}| \colon T \in \binom{V(\HH)}{\ell} \right\}.
\]
In other words, $\Delta_\ell(\HH)$ is the maximum number of edges of $\HH$ that a single $\ell$-element set of vertices can be contained in.

We say that a family $\C$ of subsets of $V(\HH)$ is a family of \emph{containers} for (the independent sets of) $\HH$ if every independent set is contained in some $B \in \C$.  Every hypergraph $\HH$ admits two trivial families of containers: the one-element family $\{V(\HH)\}$ and the family of all (maximal) independent sets of $\HH$.  The following proposition guarantees the existence of a family of containers that interpolates between these two extremes: it is much smaller than the family of all independent sets but each of the containers is significantly smaller than $V(\HH)$.

\begin{prop}
  \label{prop:containers-main}
  Let $\HH$ be a~non-empty $k$-uniform hypergraph. Suppose that positive integers $b$ and $r$ satisfy
  \[
    \Delta_\ell(\HH) \le \left(\frac{b}{v(\HH)}\right)^{\ell-1} \frac{e(\HH)}{r}
  \]
  for every $\ell \in \{1, \dotsc, k\}$. Then there exists a collection $\C$ of at most $\exp\big(kb\log(v(\HH))\big)$ subsets of $V(\HH)$ such that:
  \begin{enumerate}[label=(\roman*)]
  \item
    every independent set of $\HH$ is contained in some $B \in \C$;
  \item
    $|B| \le v(\HH) - 2^{-k(k+1)} \cdot r$ for every $B \in \C$.
  \end{enumerate}
\end{prop}

In a typical application of the proposition, such as the one presented in this section, one takes $r$ to be close to $v(\HH)$ while $b = v(\HH)^\alpha$ for some $\alpha \in (0,1)$.

Call a triple $(a,b,c)$ of numbers \emph{non-metric} if some permutation of $(a,b,c)$ does not satisfy the triangle inequality, that is, if $a+b < c$, $a+c < b$, or $b+c < a$. Given positive integers $n$ and $M$, define the hypergraph $\HH_n^M$ of \emph{non-metric triangles} as follows. The vertex set of $\HH_n^M$ is $\binom{\br{n}}{2} \times \br{M}$ and its edges are all triples $\{(e_i,d_i)\}_{i=1}^3$ such that
\begin{itemize}
\item
  $\{e_1, e_2, e_3\}$ is the set of edges of some triangle in the complete graph on $\br{n}$,
\item
  $(d_1, d_2, d_3)$ is a non-metric triple.
\end{itemize}
It is not hard to see that the elements of $\M_n^M$ are in a one-to-one correspondence with independent subsets of $\HH_n^M$ that contain exactly one element of the set $\{e\} \times \br{M}$ for each $e \in \binom{\br{n}}{2}$.

Now, given a set $A \subseteq \binom{\br{n}}{2} \times \br{M}$, define, for each $e \in \binom{\br{n}}{2}$,
\[
  A_e := \{d \in \br{M} \colon (e, d) \in A\}.
\]
Viewing $A$ as a representation of the product set $\prod_e A_e$, we define its volume by
\[
  \vol(A) := \prod_{e \in \binom{\br{n}}{2}} |A_e|,
\]
which is precisely the number of sets $I \subseteq A$ that contain exactly one element of the set $\{e\} \times \br{M}$ for each $e \in \binom{\br{n}}{2}$.

The following supersaturation statement for $\HH_n^M$ is the key ingredient in our application of the container method to the setting of discrete metric spaces.

\begin{prop}
  \label{prop:supersaturation-global}
  Let $n$ and $M$ be positive integers, with $M$ \emph{even} and $n \ge 3$. Suppose that  $A \subseteq \binom{\br{n}}{2} \times \br{M}$ satisfies
  \[
  \vol(A) \ge \left(\frac{(1+\eps)M}{2}\right)^{\binom{n}{2}}
  \]
  for some $\eps \ge 16/M$. Then there exist an $m \in \br{M}$ and a set $A' \subseteq A$ with $|A'| \le mn^2$ such that
  \begin{itemize}
  \item
    $e(\HH_n^M[A']) \ge \eps m^2 M \binom{n}{3} / (32 \log_2 M)$,
  \item
    $\Delta_1(\HH_n^M[A']) \le 4nm^2$,
  \item
    $\Delta_2(\HH_n^M[A']) \le 2m$,
  \end{itemize}
  where $\HH[B]$ denotes the subhypergraph of $\HH$ induced by the subset $B$, that is, the hypergraph whose vertex set is $B$ and whose edges are all edges of $\HH$ that are fully contained in $B$.
\end{prop}

The basic building block in the proof of Proposition~\ref{prop:supersaturation-global} is the following elementary lemma, which one can prove combining the ideas in the proofs of Lemmas~\ref{lem:independent-box-poly} and~\ref{lem:independent_max_volume}.

\begin{lem}
  \label{lem:supersaturation-local}
  Let $M$ and $m$ be positive integers, with $M \ge 16$ even, and suppose that $A, B, C \subseteq \br{M}$. Let $A' \subseteq A$ comprise the $m$ largest and the $m$ smallest elements of $A$ and define $B'$ and $C'$ analogously. If $|A| \cdot |B| \cdot |C| \ge (M/2+2m)^3$, then the set $A' \times B' \times C'$ contains $m^3$ non-metric triples.
\end{lem}

\newcommand{\smax}{s_{\max}}

\begin{proof}[Proof of Proposition~\ref{prop:supersaturation-global}]
  As $\vol(A) \le M^{\binom{n}{2}}$, we may assume that $\eps \le 1$ and hence $M \ge 16$. Let $\T$ be the family of edge sets of all triangles in the complete graph with vertex set $\br{n}$. Since each edge (of the complete graph) belongs to exactly $n-2$ triangles,
  \begin{equation}
    \label{eq:prod-T-vol-A}
    \prod_{e_1e_2e_3 \in \T} \left( |A_{e_1}| |A_{e_2}| |A_{e_3}|\right)^{1/3} = \vol(A)^{\frac{n-2}{3}} \ge \left(\frac{(1+\eps) M}{2}\right)^{\binom{n}{3}}.
  \end{equation}
  We partition the family $\T$ as follows. Set $\smax = \lfloor \log_2 M \rfloor - 2$ and, for each $s \in \{0, \ldots, \smax\}$, define
  \[
  \T_s := \left\{e_1e_2e_3 \in \T \colon \left( |A_{e_1}| |A_{e_2}| |A_{e_3}|\right)^{1/3} \in \left[\frac{M}{2} + 2^{s+1}, \frac{M}{2} + 2^{s+2} \right) \right\};
  \]
  moreover, let $\T_* := \T \setminus \bigcup_{s=0}^{\smax} \T_s$. Observe that $\T_*$ contains only $e_1e_2e_3$ with $|A_{e_1}||A_{e_2}||A_{e_3}| < (\frac{M}{2}+2)^3$, as $2^{\smax+2} > M/2$, and thus
  \begin{equation}
    \label{eq:prod-Ts-vol-A}
    \prod_{e_1e_2e_3}  \left( |A_{e_1}| |A_{e_2}| |A_{e_3}|\right)^{1/3} \le \left(\frac{M}{2}+2\right)^{|\T_*|} \cdot \prod_{s=0}^{\smax} \left(\frac{M}{2} + 2^{s+2}\right)^{|\T_s|}.
  \end{equation}
  We claim that there is an $s \in \{0, \ldots, \smax\}$ satisfying
  \[
    |\T_s| \ge \frac{\eps M}{2^{s+5} \log_2M} \binom{n}{3}.
  \]
  Indeed, if this were not true, then~\eqref{eq:prod-Ts-vol-A} would contradict~\eqref{eq:prod-T-vol-A}, as $16/M \le \eps \le 1$ and $\smax + 1 \le \log_2 M$ (we omit the straightforward calculation).

  Finally, let $m = 2^s$ and let $A'$ be the set of all pairs $(e, d) \in A$ such that $d$ is among the $m$ largest or the $m$ smallest elements of $A_e$. This definition guarantees that $|A'| \le 2m \binom{n}{2} \le mn^2$, that $\Delta_1(\HH_n^M[A']) \le (2m)^2n$, and that $\Delta_2(\HH_n^M[A']) \le 2m$. For each $e_1e_2e_3 \in \T_s$, we may invoke Lemma~\ref{lem:supersaturation-local} with $(A,B,C) \leftarrow (A_{e_1}, A_{e_2}, A_{e_3})$ to deduce that the set $A'_{e_1} \times  A'_{e_2} \times A'_{e_3}$ contains at least $m^3$ non-metric triples. In particular,
  \[
    e(\HH_n^M[A']) \ge |\T_s| \cdot m^3 \ge \frac{\eps Mm^2}{32 \log_2M} \binom{n}{3},
  \]
  which concludes the proof of the proposition.
\end{proof}

Fix a large integer $n$ and let $M = 2\lfloor \frac{n}{2} \rfloor$. Suppose that $A \subseteq \binom{\br{n}}{2} \times \br{M}$ satisfies $\vol(A) = \left(\frac{(1+\eps)M}{2}\right)^{\binom{n}{2}}$ for some $16/M \le \eps \le 1$ and let $m$ and $A'$ be as in Proposition~\ref{prop:supersaturation-global}. It is straightforward to verify that the ($3$-uniform) hypergraph $\HH_n^M[A']$ satisfies the assumption of Proposition~\ref{prop:containers-main} with
\[
  b := \left\lceil n^{3/2} \right\rceil \qquad \text{and} \qquad r := \left\lfloor \frac{\eps M \binom{n}{3}}{128 n \log_2 M} \right\rfloor \ge \frac{\eps M n^2}{2^{10} \log_2 M}.
\]
The proposition supplies a family $\C'$ of at most $\exp\big(3n^{3/2} \log(n^2M)\big)$ containers for independent sets of $\HH_n^M[A']$, each of cardinality at most $|A'| - \frac{\eps M n^2}{2^{22} \log_2 M}$. Therefore, the collection
\[
  \C := \C(A) := \{ (A \setminus A') \cup B' : B' \in \C'\}
\]
is a family of containers for independent sets of $\HH_n^M[A]$, with the same cardinality as $\C$, that satisfies, for every $B \in \C$,
\[
  \vol(B) \le \left(\frac{M-1}{M}\right)^{\frac{\eps M n^2}{2^{22} \log_2M}} \cdot \vol(A) = \left(\frac{(1+\eps')M}{2}\right)^{\binom{n}{2}},
\]
for some $\eps' \le \left(1 - \frac{1}{2^{22}\log_2M}\right)\eps$.

We build a family $\C$ of containers for the independent sets of $\M_n^M$ recursively as follows. We start with the trivial family containing only the set $\binom{\br{n}}{2} \times \br{M}$. As long as our family contains some set $A$ with
\[
\vol(A) > \left(\frac{(1 + \eps_0) M}{2}\right)^{\binom{n}{2}},
\]
where $\eps_0 := 1/\sqrt{n} \ge 16/M$, we replace $A$ with the elements of the family $\C(A)$ defined above.  We claim that the depth of the recursion is bounded by $t := C \log_2(M) \log(n)$, for some large constant $C$.  Indeed, if a set $B$ reached the $t$-th level of the recursion, then
\[
  \vol(B) \le \left(\frac{\left(1+\eps_t\right)M}{2}\right)^{\binom{n}{2}},
\]
where
\[
  \eps_t = \max\left\{ \left(1-\frac{1}{2^{22}\log_2M}\right)^t, \frac{16}{M} \right\} \le \eps_0,
\]
a contradiction.  It follows that
\[
  |\C| \le \exp\left(3n^{3/2} \log(n^2M) \cdot t\right) \le \exp\left(Cn^{3/2}(\log n)^3\right).
\]
Since each space in $\M_n^M$ corresponds to an independent set of $\HH_n^M$ and is thus described by one of the containers, we obtain
\[
  |\M_n^M| \le \sum_{B \in \C} \vol(B) \le \exp\left(Cn^{3/2}(\log n)^3+ n^{3/2}\right) \cdot \left(\frac{M}{2}\right)^{\binom{n}{2}}.
\]
Finally, this translates to the following upper bound on the volume:
\[
\vol(\M_n) \le \left(\frac{2}{M}\right)^{\binom{n}{2}} \cdot |\M_n^M| \le \exp\left(Cn^{3/2}(\log n)^3\right),
\]
see~\eqref{eq:card_MnM-vol_Mn} in Section~\ref{sec:discrete-problem} below.

\subsection{The \texorpdfstring{K\H{o}v\'ari}{K\"ov\'ari}--S\'os--Tur\'an approach}

In this section, we shall show yet another approach to the volume estimate. The estimate it gives is
\begin{equation}
  \label{eq:vol-Mn-KST}
  \vol(\M_n) \le \exp \left( \frac{C n^2 (\log\log n)^2}{\log n}\right),
\end{equation}
better than what we obtained using the exchangeability or the regularity lemma approaches, but not as good as what is proved by the entropy or the hypergraph container methods. Our argument bears similarities to the classical work of Erd\H{o}s, Kleitman, and Rotschild~\cite{ErKlRo76}, which estimates the number of graphs that do not contain a clique of a given size.

Given a positive integer $t$, we shall write $K_{t,t}$ for the complete bipartite graph with $t$ vertices on each side. The \emph{Tur\'an number} for $K_{t,t}$, denoted $\ext$, is the largest number of edges in an $n$-vertex graph that does not contain $K_{t,t}$ as a (not necessarily induced) subgraph. The following well-known upper bound on $\ext$ was obtained by K\H{o}v\'ari, S\'os, and Tur\'an \cite{KoSoTu54}, see also~\cite[Section~3]{FurSim13}.

\begin{thm}[K\H{o}v\'ari--S\'os--Tur\'an~\cite{KoSoTu54}]
  \label{thm:KST}
  For every $t \ge 2$,
  \[
  \ext \le \frac{1}{2} \left((t-1)^{1/t} n^{2-1/t} + (t-1)n\right).
  \]
\end{thm}

Fix integers $n$ and $t \ge 2$ and a real $\delta \in (0,1)$. For a $d \in \M_n$, let
\[
  T(d) := \left\{\{i, j\} \in \binom{\br{n}}{2} : d_{ij} < 1-\delta \right\}
\]
and partition $\M_n$ into $\Mnd$ and $\Mndc$, where
\[
  \Mnd := \{d \in \M_n : T(d) \nsupseteq K_{t,t}\} \quad \text{and} \quad \Mndc := \M_n \setminus \Mnd.
\]
Since $|T(d)| \le \ext$ for every $d \in \Mnd$, we have
\[
\begin{split}
  \vol(\Mnd) & \le \binom{\binom{n}{2}}{\ext} \cdot 2^{\ext} \cdot (1+\delta)^{\binom{n}{2} - \ext} \\
  & \le \exp\left(3\ext \log n + \delta n^2\right).
\end{split}
\]
It follows from Theorem~\ref{thm:KST} and simple calculus that, if $n \ge t^2 \ge 4$,
\begin{equation}
  \label{eq:KST-vol-Mn1}
  \vol(\Mnd) \le \exp\left( 5n^{2-1/t}\log n + \delta n^2 \right).
\end{equation}
We now derive an upper bound on the volume of $\Mndc$.

\begin{lem}
  \label{lem:KST-vol-Mn2}
  If $t \ge 6$, $n \ge 4t^2$, and $\delta \ge 3 \log(4t)/t$, then
  \[
  \vol(\Mndc) \le e^{-n} \cdot \vol(\M_{n-2t}).
  \]
\end{lem}
\begin{proof}
  Suppose that $d \in \Mndc$. By definition, we may find two disjoint $t$-element sets $I, J \subseteq \br{n}$ such that $d_{ij} < 1-\delta$ for every pair $(i,j) \in I \times J$. Fix any such pair $(I,J)$ and suppose that $k \in \br{n} \setminus (I \cup J)$. Let
  \[
  a_I = \min_{i \in I} d_{ik}, \quad b_I = \max_{i \in I} d_{ik},
  \quad a_J = \min_{j \in J} d_{jk}, \quad b_J = \max_{j \in J} d_{jk}.
  \]
  Since all distances between $I$ and $J$ are shorter than $1- \delta$, both $b_J - a_I$ and $b_I - a_J$ must be smaller than $1-\delta$ and, consequently,
  \[
  (b_I - a_I)(b_J - a_J) \le \left(\frac{(b_I - a_I) + (b_J - a_J)}{2}\right)^2 < (1-\delta)^2.
  \]
  In other words, all distances between $k$ and $I$ and between $k$ and $J$ fall into intervals $A_I$ and $A_J$, respectively, where $|A_I| \cdot |A_J| < (1-\delta)^2$. In particular, if $W$ denotes the set of all $2t$-dimensional vectors $(d_{ik}')_{i \in I \cup J}$ which may be used to complete $(d_e)_{e \in \binom{I \cup J}{2}}$ to a metric space on $I \cup J \cup \{k\}$, then
  \[
  \vol(W) \le t^4 \cdot 2^4 \cdot (1-\delta)^{2t-4},
  \]
  as there are at most $t^4$ choices for the $i,i' \in I$ and $j,j' \in J$ for which $a_I = d_{ik}$, $b_I = d_{i'k}$, $a_J = d_{jk}$, and $b_J = d_{j'k}$. By our assumption on $t$ and $\delta$,
  \[
    \vol(W) \le \left(2te^{-\delta(t-2)/2}\right)^4 \le \left(2te^{-\delta t /3}\right)^4 \le 2^{-4}.
  \]

  We may now bound the volume of $\Mndc$ as follows. First, the number of choices for $I$ and $J$ is at most $\binom{n}{t}^2$ and the volume of the distances between pairs in $I \cup J$ does not exceed $2^{\binom{2t}{2}}$. Next, bounding the volume of $(d_{ik})_{i \in I \cup J, k \notin I \cup J}$ as above and the volume of $(d_{ij})_{i,j \notin I \cup J}$ by $\vol(\M_{n-2t})$, we obtain
  \[
  \begin{split}
    \vol(\Mndc) & \le \binom{n}{t}^2 \cdot 2^{\binom{2t}{2}} \cdot \vol(W)^{n-2t} \cdot \vol(\M_{n-2t}) \\
    & \le n^{2t} \cdot 2^{2t^2} \cdot 2^{-4n + 8t} \cdot \vol(\M_{n-2t}),
  \end{split}
  \]
  which, with our assumption on $n$ and $t$, implies the claimed bound.
\end{proof}

One may now derive~\eqref{eq:vol-Mn-KST} by induction on $n$ using Lemma~\ref{lem:KST-vol-Mn2} and the upper bound on $\vol(\Mnd)$ given by~\eqref{eq:KST-vol-Mn1}. In the inductive step, one may take $t = \log n / (2 \log \log n)$ and $\delta = 3 \log(4t) / t$, say. We leave the details to the reader.

\section{Discussion and open questions}\label{sec:discussion_and_open_questions}

\subsection{Further questions}\label{sec:further_questions}

As we remarked, we were not able to decide whether $\vol(\M_n)$ is increasing in $n$.
If one could prove that this is indeed the case, this would greatly simplify our proof of Theorem~\ref{thm:minimal_distance} on the shortest distance in the metric space sampled uniformly from~$\M_n$.

Suppose that $d$ is a metric space sampled uniformly from $\M_n$. A key ingredient in our proof of Theorem~\ref{thm:minimal_distance} is the upper bound on $\P(d_{12} < 1)$ established in Proposition~\ref{prop:distance-lower-tail}. It would be interesting to obtain additional information about the distribution of $d_{12}$. In particular, is it true that $\P(d_{12} < 1) = \Theta(n^{-1/2})$? We believe that this is the case and our belief seems to be supported by the lower bound of Proposition~\ref{prop:distance-lower-tail}. Going even further and writing $f_n$ for the density of the random variable $d_{12}$, one may ask whether the function $[0, \infty) \ni t \mapsto f_n(1-\frac{t}{\sqrt{n}})$ has a limit as $n \to \infty$? It would also be very interesting to estimate the probability $\P(d_{12} < 1-\frac{t}{\sqrt{n}})$ for $t \gg 1$. Propositions~\ref{prop:exp-bound-alpha} and~\ref{prop:volume_comparison} imply that $\P(d_{12} < \alpha)$ is exponentially small in $n$ for every fixed $\alpha < 1/2$, see also~\eqref{eq:exponential_probability_for_very_small_distances}, but we are not ready to make any conjectures about the range $t \le \sqrt{n}/2$.

Do the empirical measures of individual distances (and tuples of distances)
satisfy a large deviation principle? If so, what is the rate
function? Is it possible to recover our result about the minimum distance from such a large deviation estimate?

\subsection{Relation with the discrete problem}

\label{sec:discrete-problem}

One may naturally consider a discrete analogue of the problem we study in this paper, where we require the distances between every pair of points to be integers. More specifically, given integers $M\ge 1$ and $n\ge 2$, one may consider the space $\M_n^M$ defined by
\begin{equation*}
  \M_n^M := \left\{ (\dist{i}{j})\in\{1, \dotsc, M\}^{\binom{\br{n}}{2}} : \dist{i}{j}\le \dist{i}{k} + \dist{k}{j}\text{ for all $i,j,k$} \right\},
\end{equation*}
which is closely related to the metric polytope $\M_n$. Indeed, for every~$n$, $\M_n$ is naturally obtained as a limit of $\left(\frac{2}{M}\right)\M_n^M$ as $M$ tends to infinity. We proceed to discuss some of the quantitative aspects of this relation.

As with the continuous problem, observing that the cube
\begin{equation}\label{eq:discrete_problem_cube_structure}
  \left\{\left\lceil\frac{M}{2}\right\rceil,\left\lceil\frac{M}{2}\right\rceil+1,\dotsc, M\right\}^{\binom{n}{2}}
\end{equation}
is fully contained in $\M_n^M$, one gets the following simple lower bound on the cardinality of~$\M_n^M$:
\begin{equation}\label{eq:discrete_problem_naive_estimate}
  |\M_n^M|\ge \left\lceil\frac{M+1}{2}\right\rceil^{\binom{n}{2}}.
\end{equation}

In fact, one may obtain bounds on $|\M_n^M|$ from bounds on $\vol(\M_n)$ and vice-versa. In one direction, consider the map $\varphi \colon (0,2]^{\binom{n}{2}} \to \{1, \dotsc, M\}^{\binom{n}{2}}$ defined by
\[
\varphi(d)_{ij} = \left\lceil \frac{M d_{ij}}{2} \right\rceil.
\]
Observe that $\varphi$ maps $\M_n$ to $\M_n^M$ (as $\lceil x \rceil + \lceil y \rceil \ge \lceil z \rceil$ whenever $x + y \ge z$) and that $\vol(\varphi^{-1}(d')) = \left(\frac{2}{M}\right)^{\binom{n}{2}}$ for any $d' \in \{1, \dotsc, M\}^{\binom{n}{2}}$. Consequently,
\[
\vol(\M_n) \le \left(\frac{2}{M}\right)^{\binom{n}{2}} |\M_n^M|.
\]
In the other direction, consider the map $\psi$ from $\M_n^M$ to the power set of $(0,2]^{\binom{n}{2}}$ defined by
\[
\psi(d) = \prod_{i,j} \left(\frac{2}{M+2}\left(d_{ij} + 1\right), \frac{2}{M+2}\left(d_{ij} + 2\right)\right].
\]
Observe that $\psi$ maps each $d \in \M_n^M$ to a cube that is fully contained in $\M_n$ (as $x+y \ge z$ implies that $(x+\Delta x)+(y+\Delta y) \ge (z+\Delta z)$ for all $\Delta x, \Delta y, \Delta z \in (1,2]$) so that cubes corresponding to different $d$ are disjoint. It follows that
\[
|\M_n^M| \left(\frac{2}{M+2}\right)^{\binom{n}{2}} \le \vol(\M_n).
\]
Putting these bounds together yields
\begin{equation}
  \label{eq:card_MnM-vol_Mn}
  \left(\frac{M}{2}\right)^{\binom{n}{2}}\vol(\M_n)\le |\M_n^M|\le \left(\frac{M}{2}+1\right)^{\binom{n}{2}}\vol(\M_n).
\end{equation}

\smallskip
Concurrently with the writing of this paper, Mubayi and Terry~\cite{mubayi2019discrete} studied the discrete problem in the regime where \emph{$M$ is fixed} and $n$ tends to infinity, proving that
\begin{equation}
  \label{eq:Mubayi-Terry}
  |\M_n^M| =
  \begin{cases}
    \big(1+e^{-\Omega(n)}\big)\left(\frac{M}{2}+1\right)^{\binom{n}{2}} & \text{if $M$ is even,} \\
    \left(\frac{M+1}{2}\right)^{\binom{n}{2} + o(n^2)} & \text{if $M$ is odd}
  \end{cases}
\end{equation}
(with additional structural information in the odd $M$ case).

The above bound reveals that for even $M$, the structure of a uniformly chosen space from $\M_n^M$ is very rigid: the probability that even a single distance lies outside the discrete interval $\{M/2, \dotsc, M\}$ is exponentially small. This strong rigidity property stems from the assumption that $M$ is fixed and does not hold in the continuous setting. Indeed, the bound~\eqref{eq:min-dij-lower} shows that the minimum distance is smaller than $1 - \frac{c}{\sqrt n}$ in typical samples from $\M_n$. Handling such microscopic fluctuations contributes to the difficulty in controlling the volume of $\M_n$ and understanding the structure of typical samples from it.

\subsection{Metric preserving maps}

A map $\phi \colon [0,\infty)\to[0,\infty)$ is metric preserving if $\phi(d) = \big(\phi(d_{ij})\big)$ is a metric
on some set whenever $d = (d_{ij})$ is, e.g., the ceiling operation from the previous subsection.
There are many interesting examples of such maps, see~\cite{Corazza}. Every metric preserving map $\phi$
such that $\sup_{ x\in [0,2]} \phi(x) \le 2$  induces a self-map of the metric polytope.
We wonder how metric preserving maps can be utilized to further study the structure of the metric polytope.

\subsection{Other models for random metric spaces}
In this paper we investigated a certain model of a `random metric space', which in some sense is natural.
The conclusion of our results is that on a large scale this model essentially reduces to the `trivial' model where all distances are in the interval $[1,2]$, and the triangle inequality is trivially satisfied.
It would be interesting to find other models for a `random metric space', which are `natural' on the one hand, and `interesting' on the other hand, in the sense that they reveal new phenomena about metric spaces.
In \cite{MR2086637} Vershik considered one natural candidate for a
random metric space, and proved that it is essentially the Urysohn universal metric space. As remarked within the paper, `An obvious drawback of our construction is that it is not invariant with respect
to the numbering of points'.

\subsection*{Acknowledgments}
We thank Itai Benjamini for asking the question and Gil Kalai for
informing us that this object is known as the metric polytope. We
thank Omer Angel, Dor Elboim, Ronen Eldan, Ehud Friedgut, Shoni Gilboa, Rob Morris, Bal\'asz
R\'ath and Johan W\"astlund for many
interesting discussions. Special thanks are due to Rob Morris, who kindly agreed to us presenting
the results of our joint work with him in Section~\ref{sec:container-method} of this paper.

\bibliographystyle{abbrv}
\bibliography{metric_polytope}
\end{document}